  \newcommand{\calA}{\mathcal{A}}
  \newcommand{\calG}{\mathcal{G}}
  \newcommand{\calM}{\mathcal{M}}
  \newcommand{\calN}{\mathcal{N}}
  \newcommand{\calS}{\mathcal{S}}
  \newcommand{\calU}{\mathcal{U}}
  \newcommand{\NN}{\mathbb{N}}
  \newcommand{\RR}{\mathbb{R}}
  \newcommand{\ZZ}{\mathbb{Z}}
  \newtheorem{theorem}{Theorem}[section]
  \newtheorem{proposition}[theorem]{Proposition}
  \newtheorem{corollary}[theorem]{Corollary}
  \newtheorem{lemma}[theorem]{Lemma}
  \newtheorem{question}[theorem]{Question}
  \theoremstyle{definition}
  \newtheorem{definition}[theorem]{Definition}
  \newtheorem*{claim*}{Claim}
  \newtheorem{example}[theorem]{Example}
  \newtheorem{fact}[theorem]{Fact}
  \newtheorem*{question*}{Question}
  \newtheorem*{answer*}{Answer}
  \newtheorem*{application*}{Application}
  \theoremstyle{remark}
  \newtheorem{remark}[theorem]{Remark}
  \newtheorem*{remark*}{Remark}
  \DeclareMathOperator{\id}{id}
  \newcommand{\Map}{\ensuremath{\operatorname{Map}}\xspace}    
  \newcommand{\param}{{\mathchoice{\mkern1mu\mbox{\raise2.2pt\hbox{$
  \centerdot$}}
  \mkern1mu}{\mkern1mu\mbox{\raise2.2pt\hbox{$\centerdot$}}\mkern1mu}{
  \mkern1.5mu\centerdot\mkern1.5mu}{\mkern1.5mu\centerdot\mkern1.5mu}}}
  \renewcommand{\setminus}{{\smallsetminus}}
  \newcommand{\ssm}{\smallsetminus}
  \newcommand{\from}{\colon\thinspace} 
  \newcommand{\eps}{\varepsilon}
  \newcommand{\vphi}{\varphi}
  \newcommand{\del}{\partial}
  \DeclarePairedDelimiter\abs{\lvert}{\rvert}
  \newcommand{\mute}[1]{}
\begin{document}

\title{The Grand Arc Graph}
\author{Assaf Bar-Natan and Yvon Verberne}
\date{\today}

\maketitle
\begin{abstract}
In this article, we construct a new simplicial complex for infinite-type surfaces, which we call the \textit{grand arc graph}.
We show that if the end space of a surface has at least three different self-similar equivalence classes of maximal ends, then the grand arc graph is infinite-diameter and $\delta$-hyperbolic. In this case, we also show that the mapping class group acts on the grand arc graph by isometries and acts on the visible boundary continuously. When the surface has stable maximal ends, we also show that this action has finitely many orbits.
\end{abstract}

\vspace{-0.25cm}

\section{Introduction}

Let $\Sigma$ be a connected, orientable, infinite-type surface.
The mapping class group of $\Sigma$, denoted by $\Map(\Sigma)$, is the group of orientation preserving homeomorphisms of $\Sigma$, up to isotopy. 
Informally, an \textit{end} of an infinite-type surface is a way of exiting every compact subsurface in the sense of Freudenthal \cite{Fr31}.

In \cite{MR}, Mann and Rafi define a partial order on the space of ends, allowing one to consider the space of \textit{maximal ends}.
In this paper, we will consider surfaces which have finitely many self-similar equivalence classes of maximal ends. We call this splitting of 
the end-space a \textit{grand splitting}, denoted $\calS(\Sigma)$, so that 
the space of maximal ends is 
equal to $\cup_{E\in \calS(\Sigma)} E$. Since the grand splitting is preserved by $\Map(\Sigma)$, we are motivated to use it to construct a graph upon which the mapping class group acts by isometries. 

A grand arc is a simple arc in the surface $\Sigma$ such that the arc converges to exactly two ends, each of which are in different elements of the grand splitting.
We let the vertices of the grand arc graph, $\calG(\Sigma)$, be isotopy classes of grand arcs, and we place an edge between two vertices if there exists disjoint representatives of the grand arcs. We show that for many surfaces, the grand arc graph is $\delta$-hyperbolic in the sense of Gromov and has infinite-diameter.\\

\begin{theorem} ~\label{thm:InfDiamHyperbolic}
Let $\Sigma$ be an infinite-type surface with $|\calS(\Sigma)|<\infty$, 
which is not the once-punctured Cantor 
tree. Then $\calG(\Sigma)$ is a nonempty, 
connected, infinite-diameter $\delta$-hyperbolic metric space if and only if $|\calS(\Sigma)| \ge 3$. Moreover, the constant $\delta$ is independent of $\Sigma$.
\end{theorem}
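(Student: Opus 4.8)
The plan is to treat the theorem as an if-and-only-if split into three distinct tasks: nonemptiness/connectedness, the equivalence of infinite-diameter $\delta$-hyperbolicity with the condition $|\calS(\Sigma)| \ge 3$, and the uniformity of $\delta$. First I would handle the easy direction and the degenerate cases. If $|\calS(\Sigma)| \le 1$ there are no grand arcs at all, since a grand arc must connect ends in two \emph{different} classes of the grand splitting, so $\calG(\Sigma)$ is empty and the forward implication fails trivially. The genuinely interesting boundary case is $|\calS(\Sigma)| = 2$, which I expect to behave like a classical arc graph on a (topological) annulus or strip: with only two classes every grand arc runs from the first class to the second, and I would show that such a graph has finite diameter by exhibiting a uniform bound on the distance between any two grand arcs, presumably via a surgery argument that resolves intersections between two arcs sharing the same pair of end-classes. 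The once-punctured Cantor tree is excluded precisely because it is the surface where the $|\calS(\Sigma)|=2$ analysis degenerates further, so I would isolate that as a separate remark.

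For the substantive direction, assume $|\calS(\Sigma)| \ge 3$. The core of the proof is to establish $\delta$-hyperbolicity, and here the natural strategy is to import a known hyperbolicity criterion rather than estimate thin triangles directly. I would aim to apply a guessing-geodesics/unicorn-path machine of the Hensel--Przytycki--Webb type: define a family of paths between grand arcs (built by taking ``unicorn'' or surgered arcs interpolating between two given grand arcs), and verify the three standard axioms (coarsely geodesic, thin triangles in the coarse sense, and uniform bounds). The construction of unicorn paths must be adapted to the grand setting so that each intermediate arc is itself a \emph{grand} arc, i.e.\ still has both endpoints in distinct grand-splitting classes; this is where having $|\calS(\Sigma)| \ge 3$ is used, since a third class gives enough room to reroute endpoints and guarantee the surgered arcs remain admissible. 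Because the unicorn verification yields a constant depending only on the combinatorial structure of the paths and not on the surface, this simultaneously gives the ``Moreover'' statement that $\delta$ is uniform.

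To prove infinite-diameter I would exhibit an explicit sequence of grand arcs $a_n$ with $d_{\calG}(a_0, a_n) \to \infty$. The standard device is to produce a quasi-isometric embedding of an unbounded space (for instance $\ZZ$ or a hyperbolic plane) coming from a pseudo-Anosov-like or partial-pseudo-Anosov mapping class supported on an appropriate finite-type subsurface, pushing a fixed grand arc under its powers; alternatively I would build an intersection-number or subsurface-projection lower bound $d_{\calG}(a, b) \gmul f(i(a,b))$ and then construct arcs with unbounded pairwise intersection that cannot be reduced. I expect the subsurface-projection route to be cleaner, so I would identify a finite-type subsurface $W$ meeting the three end-classes, define a projection $\Proj_W$ from grand arcs to the arc complex of $W$, show it is coarsely Lipschitz and coarsely surjective, and then invoke a pseudo-Anosov on $W$ to generate arcs whose $W$-projections diverge.

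The connectedness and nonemptiness claims in the $|\calS(\Sigma)|\ge 3$ case fall out of the unicorn construction, since exhibiting a path between any two vertices proves both connectedness and, together with the existence of at least one grand arc, nonemptiness. \textbf{The main obstacle} I anticipate is the adaptation of the unicorn-path surgery to the grand arc setting: in the classical setting surgered arcs automatically stay in the same arc complex, but here I must guarantee that every surgery produces an arc whose two ends still lie in distinct grand-splitting classes, and I must control what happens when a surgery would push an endpoint into the ``wrong'' class or into a non-maximal end. Managing this, and verifying that the resulting paths still satisfy the guessing-geodesics axioms with a surface-independent constant, is the delicate heart of the argument.
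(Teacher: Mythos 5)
Your treatment of the $|\calS(\Sigma)| \ge 3$ direction follows the paper's route almost exactly: grand unicorn paths (with a third end class used to reroute surgered arcs so they remain grand), the Bowditch/Masur--Schleimer guessing-geodesics criterion giving a uniform $\delta$, connectedness falling out of the same surgeries, and infinite diameter via subsurface projection to a witness combined with a pseudo-Anosov acting on the witness's arc graph. One subtlety you underplay: grand arcs may intersect \emph{infinitely} often, so the set $\calU_G(\alpha,\beta)$ of grand unicorns need not be connected; the paper applies the criterion not to $\calU_G(\alpha,\beta)$ itself but to its $1$-neighbourhood $\calN_1(\calU_G(\alpha,\beta))$, and proving that this neighbourhood is connected (Lemma \ref{Lemma:FGM6.8}) is again where the third end class is used. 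Your plan should be amended accordingly, but this is a repair, not a change of strategy.

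The genuine gap is your handling of $|\calS(\Sigma)| = 2$. You propose to rule this case out by showing that $\calG(\Sigma)$ has \emph{finite} diameter, via a surgery argument giving a uniform bound on distances between arcs joining the same pair of end classes. This is false: by Lemma \ref{Lem:SubsurfaceProjections} together with Lemma \ref{Lem:GrandArcInfiniteDiameter}, whenever $1 < |\calS(\Sigma)| < \infty$ the graph $\calG(\Sigma)$ has infinite diameter, because distance in $\calG(\Sigma)$ is bounded below by the distance between projections to the arc graph of a witness, and one can construct grand arcs whose projections to that arc graph are arbitrarily far apart; hence no uniform surgery bound can exist. What actually fails when $|\calS(\Sigma)| = 2$ is hyperbolicity: by Lemma \ref{Lem:TwoEquivalenceClasses}, having exactly two classes is equivalent (away from the excluded surfaces) to admitting two disjoint witnesses $W_1, W_2$ with $\chi(W_i) \le -2$, and Proposition \ref{prop:TwoWitnessesNotHyperbolic} uses pseudo-Anosov maps supported on these two disjoint witnesses to quasi-isometrically embed $\ZZ^2$ into $\calG(\Sigma)$ (equivalently, to produce two commuting loxodromic elements), which is incompatible with $\delta$-hyperbolicity. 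Without this step the ``only if'' direction of the theorem is not established, since the property you attempt to disprove at $|\calS(\Sigma)| = 2$ in fact holds.
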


Theorem ~\ref{thm:InfDiamHyperbolic} does hold in the case of the once punctured Cantor tree by the work of Bavard \cite{Bav}, which uses different tools than those found in this paper. In Section \ref{Sec:Hyperbolicity}, we show that when $|\calS(\Sigma)| = 2$, the grand arc graph is not hyperbolic, but is still of infinite diameter. When $|\calS(\Sigma)| = 1$, the grand arc graph is empty.

For a finite-type surface $S$, the \textit{curve graph} is a combinatorial tool used to study the mapping class group.
The curve graph was introduced by Harvey \cite{Harvey} and is defined to be the simplicial complex where every vertex is an isotopy class of an essential simple closed curve, and an edge between two vertices signifies disjointness. Masur-Minsky proved that the curve graph is hyperbolic \cite{MM1}, and also used the curve graph to study the large-scale geometry of the mapping class group \cite{MM2}.
The utility of the curve graph to aid in the study of mapping class groups of finite-type surfaces. Unfortunately, the curve graph of an infinite-type surface is of diameter $2$, which leads one to ask the following question.

\begin{question}[AIM Workshop Problem 2.1]\label{Question:AimCurveComplexAnalogue}
What combinatorial objects are ``good" analogues of the curve graph, either uniformly for all infinite-type surfaces or for some class of infinite-type surfaces? Here
``good" means that there exist relationships between topological properties of the mapping class and
dynamical properties of its action on the combinatorial object.
\end{question}

The grand arc graph can be defined for every infinite-type surface with maximal ends.
However, we will find that the grand arc graph has the desired properties outlined in Question \ref{Question:AimCurveComplexAnalogue} in the case where there are at least 3 but not infinitely-many self-similar equivalence classes of maximal ends.
For these cases, the grand arc graph is a reasonable candidate for answering Question 
\ref{Question:AimCurveComplexAnalogue}.

In the case of surfaces of finite-type, the mapping class group has the discrete topology and acts on the curve graph continuously by isometries.
By using the work of Fanoni--Ghaswala--McLeay \cite{FGM}, it can be shown that 
the mapping class group of an infinite-type surface does not act by continuous isometries on the grand arc graph. 
However, we can prove that when the grand arc graph is $\delta$-hyperbolic, the action of the mapping class group on the boundary of the grand arc graph is continuous. We prove this result by showing that the action of the mapping class group on $\calG(\Sigma)$ is \textit{$3$-quasi-continuous}, a coarse relaxation of continuity.

\begin{theorem} \label{thm:mcgaction}
Let $\Sigma$ be an infinite-type surface with 
$|\calS(\Sigma)| < \infty$. Then the mapping class group acts on $\calG(\Sigma)$ by isometries. Moreover, if $|\calS(\Sigma)| > 2$, then the action on $\del\calG(\Sigma)$ is continuous. Moreover, when the maximal ends of $\Sigma$ are stable, then the mapping class group acts on $\calG(\Sigma)$ with finitely many orbits.
\end{theorem}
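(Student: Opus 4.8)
The plan is to treat the three assertions in turn, since they rely on quite different inputs. For the first assertion, that $\Map(\Sigma)$ acts by isometries, I would argue that the action on isotopy classes of grand arcs is by graph automorphisms. A homeomorphism $f \in \Map(\Sigma)$ carries a simple arc to a simple arc and ends to ends, and --- because the grand splitting $\calS(\Sigma)$ is preserved by $\Map(\Sigma)$ --- it sends the two endpoints-at-infinity of a grand arc, which lie in two distinct classes of $\calS(\Sigma)$, to endpoints lying in two distinct classes. Hence $f$ permutes the vertex set of $\calG(\Sigma)$. Since homeomorphisms preserve disjointness of representatives, $f$ also preserves the adjacency relation, so it acts as a simplicial automorphism, and any such automorphism is an isometry of the path metric. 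This step is essentially a bookkeeping check and I expect no difficulty.

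For the continuity of the action on $\del\calG(\Sigma)$ when $|\calS(\Sigma)| > 2$, the strategy is the one flagged in the introduction: prove that the (non-continuous) isometric action is nevertheless $3$-quasi-continuous, and then invoke the general principle that a quasi-continuous isometric action on a $\delta$-hyperbolic space descends to a continuous action on the Gromov boundary. Concretely, I would fix a grand arc $a$ together with a compact subsurface $K$ carrying the essential part of a representative, and take the neighborhood $U$ of the identity consisting of mapping classes supported off $K$, i.e.\ fixing $K$ pointwise up to isotopy. For $g \in U$, the arcs $a$ and $g\cdot a$ agree on $K$ and differ only in the complementary pieces running out to the ends; I would then connect $a$ to $g\cdot a$ by a path of length at most $3$ in $\calG(\Sigma)$ by inserting auxiliary grand arcs disjoint from both tails. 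This is where the hypothesis of at least three classes of maximal ends enters: it guarantees via \thmref{thm:InfDiamHyperbolic} that $\calG(\Sigma)$ is $\delta$-hyperbolic of infinite diameter, so that $\del\calG(\Sigma)$ is a genuine object, and it provides enough room to route disjoint grand arcs around the two ends that $a$ limits to. Bounded displacement on vertices then yields bounded displacement of Gromov products, hence continuity on the boundary.

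For the final assertion, that the action has finitely many orbits when the maximal ends are stable, I would run a change-of-coordinates argument adapted to infinite-type surfaces. First, since $|\calS(\Sigma)| < \infty$, a grand arc $a$ is labeled by the unordered pair of distinct classes $\{E_i, E_j\}$ of maximal ends to which it limits, and there are only finitely many such pairs; so it suffices to bound the number of orbits of arcs joining a single fixed pair. For such a pair I would cut $\Sigma$ along $a$ and analyze the homeomorphism type of the resulting bordered surface, tracking its genus and the homeomorphism type of its end space (with the subset accumulated by genus) as in the Ker\'ekj\'art\'o--Richards classification. Two arcs joining the same pair lie in the same $\Map(\Sigma)$-orbit precisely when their cut surfaces agree under a homeomorphism respecting this marking data, so the number of orbits equals the number of homeomorphism types of cut surfaces, and stability of the maximal ends is exactly what forces this number to be finite: self-similarity of a stable end means that excising the tail of an arc does not alter the local topology at the end, so the cut surface is homeomorphic to $\Sigma$ up to a controlled modification, of which only finitely many occur.

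I expect this last step to be the main obstacle. The difficulty is purely in the infinite-type classification: one must verify that cutting along a grand arc preserves stability of the relevant ends, and that the genus distribution together with the end space of the cut surface is pinned down up to finitely many possibilities. Establishing a clean change-of-coordinates principle in this setting --- controlling the interaction between the arc, the possibly infinite genus, and the self-similar structure of the stable ends --- is where the real work lies, and I would isolate it as a standalone lemma before assembling the final orbit count.
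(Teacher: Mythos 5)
Your proposal follows the paper's own proof in all three parts: the isometry statement is the same bookkeeping observation; your boundary-continuity argument (a neighbourhood of the identity consisting of mapping classes fixing a finite-type subsurface carrying the core of the arc, length-$3$ paths routed through a third class of maximal ends to establish $3$-quasi-continuity, then the general principle that quasi-continuous isometric actions on $\delta$-hyperbolic spaces act continuously on the boundary) is precisely the paper's Proposition~\ref{prop:QuasiContinuous} together with the theorem that follows it; and your orbit count by cutting along a grand arc, invoking the Ker\'ekj\'art\'o--Richards classification, and using stability to move endpoints within an equivalence class by a mapping class is the paper's Proposition~\ref{prop:FiniteOrbits}, which likewise concludes with exactly $\binom{m}{2}$ orbits. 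The step you flag as the main obstacle is handled in the paper by citing Lemma 5.1 of \cite{FGM} (equivalent stable ends are locally homeomorphic, hence swappable by an end-space homeomorphism) together with \cite{AFLY} (such homeomorphisms are induced by mapping classes), which is the clean change-of-coordinates lemma you anticipated isolating.
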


\subsection{Prior results}

Throughout  this section, let $\Sigma$ be an 
infinite-type surface, and let $E$ be its end space. Let $\calG(\Sigma)$ be the grand arc graph associated to $\Sigma$.

\subsubsection{Relationship to the ray graph}

The \textit{ray graph} was introduced by Calegari in \cite{Calegari} and is defined as follows. The ray graph is the graph whose vertex set is the
set of isotopy classes of proper rays, with interior in the complement of the Cantor
set $K\subset \RR^2$, from a point in $K$ to infinity, and whose edges (of length 1) are the pairs of
such rays that can be realized disjointly.

In 2016, Bavard showed that the ray graph is $\delta$-hyperbolic, and used the ray graph to show the existence of non-trivial quasimorphisms on the once-punctured Cantor tree.
We note that the definition of the ray graph and the definition of the grand arc graph of the once-punctured Cantor tree agree, so in some sense, the grand arc graph is a generalization of the ray graph but for surfaces which have more than two types of maximal ends. For the ray graph, the point at infinity can be thought of as an isolated puncture, and therefore, $\RR^2 \smallsetminus K$ is the once-punctured Cantor tree surface. All ends of the once-punctured Cantor tree are maximal, and the ends comprising of the Cantor set are all in the same self-similar equivalence class of maximal ends. Therefore, the ray graph is precisely the grand arc graph for the once-punctured Cantor tree. In this sense, the grand arc graph is a generalization of the ray graph, but for surfaces which have more than two types of maximal ends.

\subsubsection{Relationship to the omnipresent arc graph}
 The omnipresent arc graph was introduced by Fanoni, Ghaswala and McLeay \cite{FGM} and is defined as follows.
A \textit{one-cut surface} is a complementary component of a separating loop. 
A one-cut subsurface which is homeomorphic to the entire surface is called a \textit{one-cut homeomorphic subsurface}.
An \textit{omnipresent arc} is an arc which joins two distinct ends and intersects all one-cut homeomorphic subsurfaces.
The \textit{omnipresent arc graph} is the graph where vertices are homotopy classes of omnipresent arcs, and edges correspond to having disjoint representatives.

Fanoni--Ghaswala--McLeay show that if a surface $\Sigma$ is stable (see Definitions 5.3 and 5.4, and Theorem B in \cite{FGM}), then an arc is omnipresent if and only if it joins ends whose orbit under $\Map(\Sigma)$ is finite. Additionally, note that finite orbit ends must be maximal, and by self-similarity of the sets in the grand splitting, they must lie in different elements of the grand splitting. In other words: 

\begin{fact}
If $\Sigma$ is a stable surface, and $\alpha$ is an omnipresent arc in $\Sigma$, then $\alpha$ is a grand arc.
\end{fact}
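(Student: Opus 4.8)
The plan is to read off the statement from the Fanoni--Ghaswala--McLeay characterization of omnipresent arcs recalled just above, combined with two elementary properties of finite-orbit ends. Since $\Sigma$ is stable and $\alpha$ is omnipresent, that characterization says $\alpha$ joins two distinct ends $x \ne y$ whose orbits under $\Map(\Sigma)$ are finite. Because $\alpha$ is a simple proper arc, each of its two ends eventually stays in a single complementary component of every large compact set, so each converges to a unique end of $\Sigma$; thus $\alpha$ converges to exactly the two ends $x$ and $y$, and it remains to check that these are maximal and that they lie in distinct elements of the grand splitting $\calS(\Sigma)$.

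First I would show that a finite-orbit end is automatically maximal, which is where I lean on the Mann--Rafi partial order. If an end $e$ is not maximal, then it lies strictly below some maximal end $z$, and every neighborhood of $z$ contains infinitely many pairwise disjoint clopen pieces each carrying a homeomorphic copy of a neighborhood of $e$. Homeomorphisms of $\Sigma$ supported near $z$ that permute these pieces then move $e$ to infinitely many distinct ends, so a non-maximal end has infinite orbit. Contrapositively, $x$ and $y$ are maximal, so each lies in some element of $\calS(\Sigma)$.

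The crux is the final step: $x$ and $y$ lie in \emph{different} elements of $\calS(\Sigma)$. I would argue by contradiction, supposing both lie in a single self-similar equivalence class $E$. Since $x \ne y$, the class $E$ contains at least two points; because $x$ and $y$ are equivalent and maximal, every neighborhood of $x$ contains a class member distinct from $x$, so $E$ has no isolated points and is in particular infinite. Self-similarity then allows one to build homeomorphisms of $\Sigma$ realizing non-trivial self-maps of $E$, forcing the orbit of $x$ to meet infinitely many points of $E$. This contradicts the finiteness of the orbit of $x$, so $x$ and $y$ must lie in different classes; combining the three steps shows $\alpha$ is a grand arc.

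I expect the main obstacle to be this last step, specifically upgrading ``$E$ is self-similar with more than one point'' to ``$x$ has infinite $\Map(\Sigma)$-orbit.'' The delicate point is that self-similarity is a statement purely about the end space together with its genus decoration, whereas the orbit lives in $\Map(\Sigma)$; one must verify that the self-maps of $E$ produced by self-similarity genuinely extend to homeomorphisms of the whole surface $\Sigma$ and actually move $x$ to infinitely many distinct ends. By contrast, the first two steps are essentially direct applications of the cited Fanoni--Ghaswala--McLeay and Mann--Rafi results.
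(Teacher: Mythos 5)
Your route is the same as the paper's: quote Theorem B of \cite{FGM} to see that $\alpha$ joins two distinct ends $x \ne y$ with finite $\Map(\Sigma)$-orbit, then argue that finite-orbit ends are maximal and that two distinct finite-orbit ends cannot lie in the same element of $\calS(\Sigma)$. The paper compresses those last two claims into a single sentence, and your elaborations are in the right spirit, but your argument for ``finite orbit implies maximal'' contains a step that fails as written. If $e \prec z$ strictly, you propose to move $e$ by homeomorphisms of $\Sigma$ \emph{supported near $z$} that permute the disjoint clopen copies of neighbourhoods of $e$ inside a neighbourhood $U$ of $z$. But since $E(\Sigma)$ is Hausdorff and $e \ne z$, one may take $U$ with $e \notin U$, and then any homeomorphism supported in the subsurface corresponding to $U$ fixes every end outside $U$, in particular $e$: the ends being permuted are merely ends \emph{locally homeomorphic} to $e$, not $e$ itself, so these maps do not move $e$ at all. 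The repair is to swap rather than permute: shrink a clopen neighbourhood $W$ of $e$ until it is disjoint from $U$, pick one of the infinitely many disjoint clopen copies $V' \subset U$ with $(V', V'\cap E^G) \cong (W, W\cap E^G)$, and take the end-space homeomorphism exchanging $W$ and $V'$ via this identification and equal to the identity elsewhere. Such a homeomorphism of the decorated end space is induced by a homeomorphism of $\Sigma$ --- exactly the extension issue you flagged, settled by the classification of surfaces as in the paper's use of \cite{AFLY} --- and it moves $e$ into $V'$. Running over the infinitely many disjoint copies produces infinitely many ends in the orbit of $e$.

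Your third step has the same vagueness (``self-similarity allows one to build homeomorphisms realizing non-trivial self-maps of $E$'' is never made concrete), and both it and the corrected second step are handled at once by the fact the paper itself invokes later, in the proof of Proposition~\ref{prop:FiniteOrbits}: on a stable surface, two ends lie in the same $\Map(\Sigma)$-orbit if and only if they are equivalent in the Mann--Rafi order (Lemma 5.1 of \cite{FGM} combined with \cite{AFLY}). Granting this, a finite-orbit end has finite equivalence class; a non-maximal end has infinite equivalence class by the accumulation-plus-swap argument at $z$; and if $x \ne y$ lay in the same element $E_i \in \calS(\Sigma)$, then $E_i$ would be a Cantor set (elements of the grand splitting are singletons or Cantor sets), so $x$ would have infinite equivalence class and hence infinite orbit, a contradiction. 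With these substitutions your proof closes up and coincides with the paper's intended argument.
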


In the general case when $\Sigma$ 
is not stable, it is not necessarily true that omnipresent arcs are grand\footnote{This fact was communicated to the first author by Kasra Rafi, and follows from a construction of a surface with a single, unstable maximal end.}.
We also note that grand arcs aren't necessarily omnipresent.
For instance, consider the surface which is
the connect sum of a Cantor tree and a 
blooming Cantor tree where every end is
accumulated by genus. This stable 
surface is characterized as having end space 
isomorphic to $(E,E^G)$, where $E$ and $E^G$ 
are Cantor sets. 
In this case, no ends are of finite orbit,
and hence there are no omnipresent arcs. However, 
any arc with one endpoint in $E$ and one 
endpoint in $E^G$ is a grand arc.
This shows that the grand arc graph can be defined on surfaces where the omnipresent arc graph is empty.

\subsubsection{Loxodromic actions of big mapping class groups}
In the literature, there has been a considerable amount of interest in studying loxodromic actions, weakly properly discontinuous (WPD) actions, or weakly WPD (WWPD) actions of big mapping class groups on various metric spaces \cite{AMP, Bav, HQR, MV, Rasmussen}.

If $G$ is a group acting on a $\delta$-hyperbolic path-connected metric space by isometries, we say that an element $g\in G$ acts \textit{loxodromically} if for any $x\in X$, $d(x,gx)$ is uniformly bounded from below. For more information on isometries of $\delta$-hyperbolic spaces, we refer the reader to \cite{GH}.

In Section \ref{Sec:QuasiContinuous}, we will prove that there exist loxodromic actions of the mapping class group acting on the grand arc graph coming from pseudo-Anosov mapping classes on witnesses (see Section ~\ref{Sec:WitnessesOfGrandArc} for the definition of a witness).

\begin{theorem} \label{thm:lox-action}
Let $W$ be a witness, and let $\vphi$ 
be a pseudo-Anosov mapping class that fixes the boundary of $W$. Let $\bar\vphi\in \Map(\Sigma)$ be the homeomorphism fixing 
$W^c$ and acting as $\vphi$ on $W$. Then 
$\bar\vphi$ acts loxodromically on $\calG(\Sigma)$.
\end{theorem}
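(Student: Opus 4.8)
The plan is to prove that $\bar\vphi$ acts loxodromically by exhibiting a quasi-isometrically embedded orbit, or equivalently by producing a bi-infinite quasi-geodesic in $\calG(\Sigma)$ on which $\bar\vphi$ acts by translation. The key structural fact I would exploit is that the witness $W$ carries a faithful copy of the curve/arc complex of a finite-type-like piece, and that the subsurface projection $\PG$ to the witness $W$ is coarsely equivariant and coarsely Lipschitz with respect to the grand arc graph metric. Since $\vphi$ is pseudo-Anosov on $W$ and fixes $\partial W$, by the Masur--Minsky machinery the action of $\vphi$ on the curve graph $\calC(W)$ is loxodromic, so there is a point $c \in \calC(W)$ with $d_{\calC(W)}(c, \vphi^n c) \succ |n|$. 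The strategy is then to transport this lower bound from $\calC(W)$ back up to $\calG(\Sigma)$ via the projection.

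First I would fix a grand arc $\alpha$ and compute its projection $\PG(\alpha)$ to $W$; because $\bar\vphi$ agrees with $\vphi$ on $W$ and is the identity on $W^c$, the projection is equivariant in the sense that $\PG(\bar\vphi^n \alpha)$ coarsely equals $\vphi^n \PG(\alpha)$. Second, I would establish (or cite from the earlier witness section) the \emph{Behrstock-type inequality} / Bounded Geodesic Image theorem for the projection to $W$: any path in $\calG(\Sigma)$ between two grand arcs whose projections to $W$ are far apart must pass through a grand arc that misses $W$ entirely, and such arcs are constrained because $W$ is a witness (every grand arc must cross $W$, or the arcs that avoid $W$ form a coarsely bounded set). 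This is precisely the feature that makes $W$ a witness: its projection detects distance. Third, combining equivariance with the lower bound on $\calC(W)$ gives
\begin{equation*}
d_{\calG(\Sigma)}(\alpha, \bar\vphi^n \alpha) \gmul d_{\calC(W)}\bigl(\PG(\alpha), \vphi^n \PG(\alpha)\bigr) \gmul |n|,
\end{equation*}
which is exactly the statement that $\bar\vphi$ translates $\alpha$ a uniform amount along its orbit, i.e.\ acts loxodromically.

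The main obstacle I anticipate is verifying the Bounded Geodesic Image property for the subsurface projection $\PG$ in this infinite-type setting, since $W$ is a witness rather than a classical finite-type subsurface and grand arcs are proper rays going out to ends rather than compact arcs. Specifically, I need to guarantee that the set of grand arcs that can be realized disjointly from $W$ (hence have undefined or trivial projection) does not provide a ``shortcut'' that defeats the lower bound; this is where the witness hypothesis must do real work, ensuring that any grand arc either crosses $W$ or lies in a uniformly bounded region of $\calG(\Sigma)$. A secondary technical point is confirming that the projection is coarsely Lipschitz, so that disjoint grand arcs have nearby projections; this should follow from the standard fact that disjoint arcs have projections of bounded diameter, but one must check it respects the grand-arc structure. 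Once these two properties are in hand, the loxodromic conclusion follows from the general theory of isometries of $\delta$-hyperbolic spaces together with \thmref{thm:InfDiamHyperbolic}, which supplies the hyperbolicity needed to interpret a linearly-growing orbit as a loxodromic axis.
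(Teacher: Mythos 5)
Your proposal is correct and takes essentially the same route as the paper: project grand arcs to the witness, use equivariance of the projection under $\bar\vphi$, and transport the loxodromic lower bound for $\vphi$ acting on the witness's complex back up to $\calG(\Sigma)$. The one point worth noting is that your anticipated main obstacle, a Bounded Geodesic Image/Behrstock-type theorem, is not needed: because $W$ is a witness, every grand arc crosses it, so the projection is everywhere defined, and the elementary Lipschitz bound of \lemref{Lem:SubsurfaceProjections} (a path $\alpha_0,\dots,\alpha_n$ in $\calG(\Sigma)$ projects to a path in $\calA(W)$) already yields $d_{\calG(\Sigma)}(\alpha,\bar\vphi^n\alpha)\ge d_{\calA(W)}\bigl(\pi_W(\alpha),\vphi^n\pi_W(\alpha)\bigr)$, which grows linearly in $|n|$ since $\vphi$ is pseudo-Anosov on $W$ --- exactly the paper's argument, carried out in the arc complex $\calA(W)$ rather than $\calC(W)$.
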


\subsection{Outline of paper}

In Section \ref{Sec:WitnessesOfGrandArc}, we give an overview of grand arcs, infinite-type surfaces, and the end space. In this section, we also show that when the grand splitting is finite, the grand arc graph admits witnesses. 
In Section \ref{Sec:DisjointWitnesses}, we prove that the grand splitting contains exactly two equivalence classes of maximal ends if and only if the grand arc graph admits two disjoint witnesses. In Section \ref{Sec:Connectivity}, we prove that the grand arc graph is an infinite-diameter, connected metric space when $|\calS(\Sigma)|\ge 2$.

In Section \ref{Sec:Hyperbolicity}, we prove that if the grand arc graph admits two disjoint witnesses, then it is not $\delta$-hyperbolic.
Additionally, in this section we use the machinery of \cite{FGM} to show that when the grand splitting contains at least three equivalence classes of maximal ends, it is
$\delta$-hyperbolic. These results imply Theorem ~\ref{thm:InfDiamHyperbolic}. Finally, in Section \ref{Sec:QuasiContinuous}, we prove Theorem \ref{thm:mcgaction} and Theorem \ref{thm:lox-action}.\\

\noindent \textbf{Acknowledgements.}
We would like to thank Mahan Mj, Kasra Rafi and Ferr\'{a}n Valdez for helpful conversations.
We would like to thank Tyrone Ghaswala for comments and questions on an earlier draft.
The first author was partially supported by an NSERC-PGSD Fellowship, a Queen Elizabeth II Scholarship, and a FAST Scholarship at the University of Toronto.
The second author was supported by the National Science Foundation under Grant No. DMS-1928930 while participating in
a program hosted by the Mathematical Sciences Research Institute in Berkeley, California, during the Fall 2020 semester. The second author was also partially supported by an NSERC-PDF Fellowship.

\section{Witnesses of the Grand Arc Graph}\label{Sec:WitnessesOfGrandArc}

This section is an introduction of  some concepts and terminology on infinite-type surfaces, with the goal of proving that the grand arc graph has witnesses in the sense of \cite{S}. 
To begin, we introduce some definitions which will be used throughout this paper. For more detail, we refer the reader to \cite{MR}.

\subsection{Surfaces, ends, and maximal ends}
In this paper, a \textit{surface} is 
a 2-dimensional connected orientable topological manifold. If the surface has finitely-generated fundamental group, then it is called \textit{finite-type}. Otherwise, it will be called \textit{infinite-type}. Throughout this paper, we will denote infinite type surfaces by $\Sigma$, finite type surfaces by $S$, and witnesses by $W$.

The \textit{space of ends} of a surface $\Sigma$, denoted $E = E(\Sigma)$, is defined as 
$E(\Sigma) = \varprojlim_{K\subset \Sigma} \pi_0(\Sigma\ssm K)$, 
where $K$ ranges over all compact subsets of $\Sigma$, 
and if $K_i\subset K_j$, then the map in the projective 
limit is the inclusion 
$\vphi_{ij}:\pi_0(\Sigma\ssm K_j)\hookrightarrow \pi_0(\Sigma\ssm K_i)$. 
Note that when $\pi_0(\Sigma\ssm K)$ is endowed with the discrete topology, $E(\Sigma)$ becomes a totally disconnected, separable metrizable topological space. The 
space $\bar \Sigma = \Sigma \sqcup E(\Sigma)$ is called 
the \textit{end compactification} of $\Sigma$ \cite{Fr31}.
More concretely, an \textit{end} of $\Sigma$ can also be thought of as a sequence of nested open connected sets $\{U_n\}_{n=1}^{\infty}$ which are eventually disjoint from 
any compact $K\subset \Sigma$. For more detail, we refer the reader to \cite{FGM}. 

Let $\Sigma$ and $\Sigma'$ be two infinite type surfaces. Note that any continuous map $f:\Sigma' \to \Sigma$, induces a continuous 
map $f^*:E(\Sigma) \to E(\Sigma')$ because the inverse limit functor is contravariant. If $f:\Sigma' \hookrightarrow \Sigma$ is an embedding, then $f^*$ is surjective. 
Let $i:\Sigma'\hookrightarrow \Sigma$ be an inclusion. If an end of $\Sigma$ has precisely one preimage under $i^*$, we say that it \textit{lies in} the subsurface $\Sigma'$. 

An end $e$ is said to be \textit{accumulated by genus} if 
every subsurface that it lies in cannot be embedded in $\RR^2$. Otherwise, 
it is called \textit{planar}. We denote the set of ends accumulated by genus by $E^G(\Sigma)$, and note that it is a closed subset of $E(\Sigma)$. By Richards' classification of infinite-type surfaces \cite{R}, infinite-type 
surfaces are completely characterized by the triple $(E,E^G,g)$, 
where $0\le g\le \infty$ is the \textit{genus} of the surface.

In light of this, we think of a subset $U\subset E$ as the pair $(U,U\cap E^G)$, 
and when we say $U \subset E(\Sigma_1)$, and $V \subset E(\Sigma_2)$ are homeomorphic, we mean that there is a homeomorphism $\vphi:U\to V$ such that $\vphi|_{U\cap E^G(\Sigma_1)}$ is a homeomorphism  from $U\cap E^G(\Sigma_1)$ to $V \cap E^G(\Sigma_2)$. A set $U\subset E$ is \textit{self-similar} if for any decomposition of $U$ into two sets, $U_1$ and $U_2$, which are clopen in the subspace topology of $U$, we have that $U$ is homeomorphic to $U_1$ or to $U_2$. We say that a subsurface $S\subset \Sigma$ 
\textit{separates} two ends, $e,e' \in E(\Sigma)$, 
if $e$ and $e'$ lie in different connected components 
of $\Sigma\ssm S$.

In \cite{MR}, Mann--Rafi define a \textit{partial ordering on the space of ends} by setting $x \preccurlyeq y$, for $x,y \in E(\Sigma)$, if every clopen  neighbourhood of $y$ 
contains a clopen set homeomorphic to a neighbourhood of $x$.
Two ends are \textit{equivalent}, or \textit{are of the same type} if $x \preccurlyeq y$ and $y \preccurlyeq x$. 
This partial order has maximal elements, and we let 
$\calM(\Sigma)$ denote the space of maximal ends of $\Sigma$. A subset $X$ of a group $G$ is \textit{coarsely bounded}, abbreviated CB, if every compatible left-invariant metric on $G$ gives $X$ finite diameter.
Mann--Rafi show that for infinite-type surfaces whose mapping class group is locally CB, one can write the space of maximal ends of $\Sigma$ as a disjoint union of equivalence classes of self-similar ends. 

Let $\Sigma$ be an infinite-type surface 
which has finitely many equivalence 
classes of maximal ends. 
In particular, this means we can write 
\begin{align*}
\calM(\Sigma) = \sqcup_{i=1}^{n} E_i
\end{align*}
where the $E_i$'s are self-similar, meaning that they are either Cantor sets or singletons, and all of the ends $e \in E_i$ are equivalent in the sense above.
We call such a splitting of $\calM(\Sigma)$ a 
\textit{grand splitting}, denoted by $\calS(\Sigma)$, and note that it is unique. Thus, if $\Map(\Sigma)$ is locally CB, then $\Sigma$ has a finite grand splitting.
We think of the grand splitting, 
$\calS(\Sigma)$, of 
$\calM(\Sigma)$ as a 
collection of subsets of ends, whereas we think of $\calM(\Sigma)$ as just a set of ends.

\subsection{Arcs, Loops, and Witnesses}

Throughout this paper, an \textit{arc} is a proper map $\gamma:(0,1)\to \Sigma$ such that $\gamma$ extends continuously to a map $\bar\gamma:[0,1]\to \bar\Sigma$, where $\bar\gamma(0), \bar\gamma(1)\in E(\Sigma)$.
We call the points $\bar\gamma(0)$ and $\bar\gamma(1)$ the 
\textit{endpoints} of $\gamma$.
A \textit{curve} in $\Sigma$ is a map from $S^1$ to $\Sigma$.
Throughout this paper, we often conflate curves and arcs with their images in the surface.
A curve or arc is called \textit{simple} if 
it does not intersect itself.

On a finite-type surface, arcs behave nicely, for example, they intersect one another at most finitely many times. 
On infinite-type surfaces, this is not the case, as arcs can intersect infinitely often and become quite complicated.
Because of this, we restrict ourselves to focus on arcs which converge to two maximal ends.

\begin{definition}
An arc $\alpha$ \textit{converges} to an end $e$ if for any neighborhood $U$ of $e$, $\alpha$ eventually never leaves this neighborhood. For an infinite-type surface $\Sigma$, an arc $\alpha$ is called \textit{grand} if it is simple and converges to exactly two maximal ends, each of which is in a different element of $\calS(\Sigma)$. 
\end{definition}

In other words, grand arcs are two-ended 
arcs between maximal ends of different type, or of the same, finite-orbit type.
Using the definition of a grand arc, we are able to define the grand arc graph.

\begin{definition}
Let $\Sigma$ be an infinite-type surface. We denote the set of all grand arcs on $\Sigma$ by $\calG(\Sigma)$. The \textit{grand arc graph}, denoted by $\calG(\Sigma)$ is the combinatorial graph given by the following data:
\smallskip
\begin{center}
\begin{tabular}{p{2cm} p{9cm}}
Vertices & There is one vertex for every isotopy class of grand arcs in $\Sigma$.\\
Edges &There is an edge between any two vertices corresponding to isotopy classes $\alpha$ and $\beta$ of $\calG(\Sigma)$ if $\alpha$ and $\beta$ have disjoint representatives.
\end{tabular}
\end{center}

We equip $\calG(\Sigma)$ with a metric by setting the length of each edge to $1$.
This gives the grand arc graph the structure of a complete metric space, and endows it with the discrete topology.
\end{definition}

\begin{remark}
Notice that for the once-punctured Cantor tree, the grand arc graph is precisely equal to the ray graph defined by Bavard \cite{Bav}.       
\end{remark}

Because mapping classes preserve disjointness of arcs, it follows trivially that the mapping class group acts by isometries on $\calG(\Sigma)$.

The following two definitions are inspired by the works of Schleimer and Masur--Schleimer on witnesses and the curve complex for finite-type surfaces \cite{MS, S}.
 In these works, arcs can be studied by their projections to subsurfaces. In our case, the relevant subsurfaces are finite-type subsurfaces of an infinite-type surface $\Sigma$. We would like these projections to be nonempty, prompting the following definition:

\begin{definition}
Let $\Sigma$ be an infinite-type surface, 
$S\subset \Sigma$ a subsurface, and $\alpha$ some arc in $\Sigma$. 
\begin{itemize}
    \item We say that $S$ is \textit{essential} 
    if it is not homotopic to a point, a puncture, or a boundary component of $\Sigma$.
    \item We say that $S$ \textit{sees} $\alpha$ if every subsurface homotopic to $S$ intersects every arc homotopic to $\alpha$.
    \item We say that $S$ is a \textit{witness} 
    of $\calG(\Sigma)$ if it is essential, and if it sees every arc in $\calG(\Sigma)$. 
    \end{itemize}
    If there exists a surface $W \subset \Sigma$ which is a witness for $\calG(\Sigma)$, then we say that $\calG(\Sigma)$ has a witness, or, that $\calG(\Sigma)$ admits a witness.
\end{definition}

\begin{example}
The once-punctured Cantor tree of genus one admits two disjoint witnesses (Figure 1). Each is a witness, as every grand arc must converge to the puncture and to an end in the Cantor tree, and hence must intersect the surfaces that separate these sets of ends.
\begin{figure}[ht]
\begin{picture}(100,115)
\put(-25,0){\includegraphics[width=150\unitlength]{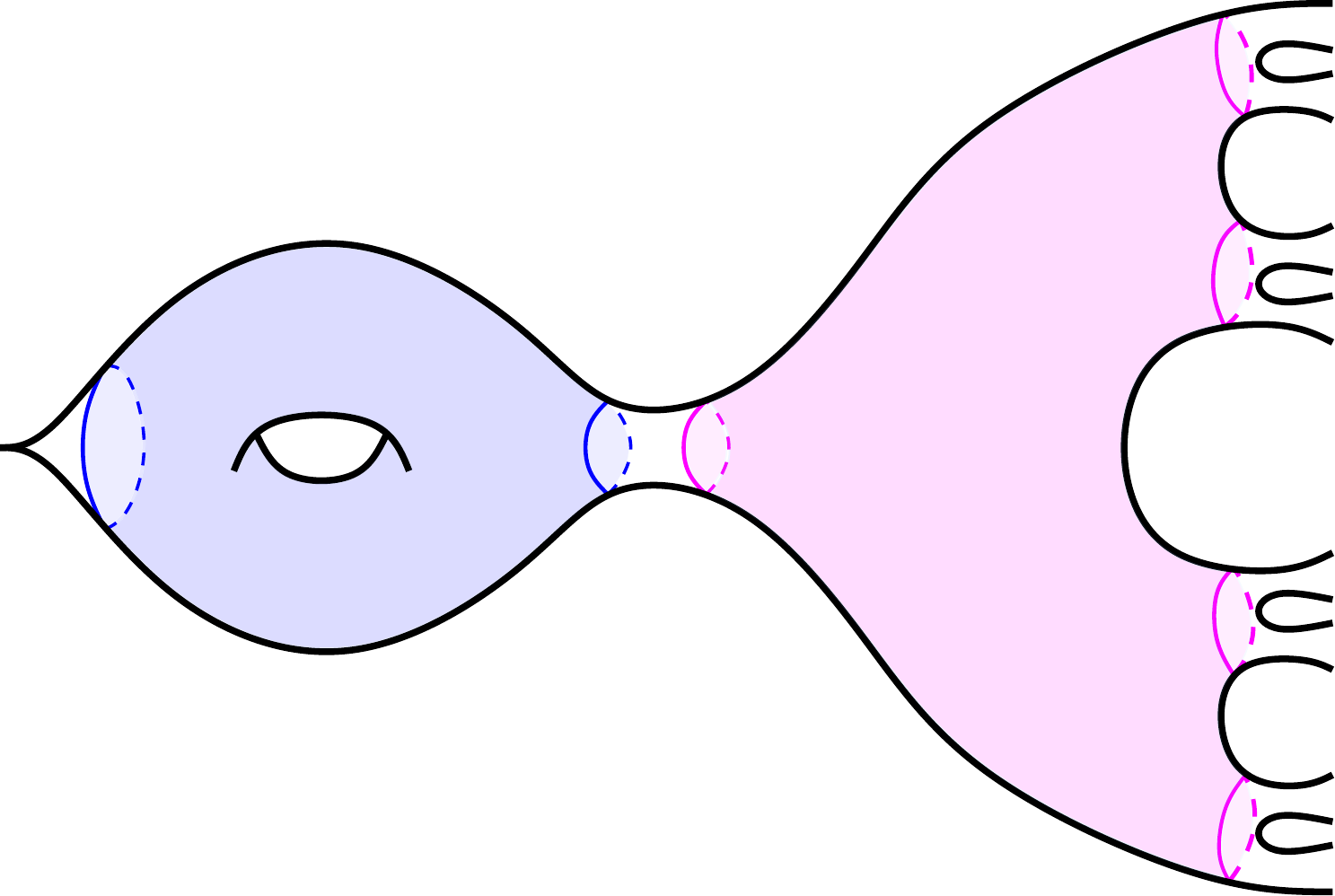}}
\end{picture}
\caption{Two disjoint witnesses on the once-punctured Cantor tree of genus one (cycloctopus).}
\label{Fig:CyclopotopusWitnesses} 
\end{figure}
\end{example}

In this paper, we will consider witnesses up to homotopy. In particular, when we say \textit{disjoint} witnesses, we mean two witnesses that are not homotopically-equivalent, and that have disjoint representatives in their respective homotopy classes. Since witnesses see all grand arcs, it follows that they are necessarily separating. We next introduce some useful notation to extend the definition of ``separating'' to the end space.

\begin{definition}
Let $S\subset \Sigma$ be a finite-type surface of $\Sigma$. If $E,E' \subset E(\Sigma)$, we say that $S$ \textit{separates} $E$ from $E'$ if the ends in $E_1$ lie in different connected components of $S^c$ from those in which the ends in $E_2$ lie.
\end{definition}

Using this definition, we see that a witness separates self-similar equivalence classes of maximal ends. We are now ready to prove that the grand arc graph has at least one witness. 

\begin{lemma} \label{lem:witness-exists}
Let $\Sigma$ be a surface of infinite type 
with $|\calS(\Sigma)| < \infty$. 
If $\calG(\Sigma)$ is nonempty, then it admits a witness. Moreover, this witness can be chosen to be homeomorphic to the $|\calS(\Sigma)|$-times punctured sphere.
\end{lemma}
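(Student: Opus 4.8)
The plan is to construct a witness explicitly as a punctured sphere by choosing a suitable compact exhaustion of $\Sigma$ and identifying a single complementary region whose ends are distributed across all $|\calS(\Sigma)|$ elements of the grand splitting. Since $\calG(\Sigma)$ is nonempty, there is at least one grand arc, so in particular $|\calS(\Sigma)| \ge 2$. Write $\calM(\Sigma) = \sqcup_{i=1}^n E_i$ with $n = |\calS(\Sigma)|$, where each $E_i$ is self-similar. The guiding principle is that a subsurface sees every grand arc precisely when its complement separates the $E_i$ from one another, so I need a finite-type $S$ whose complementary components partition the maximal ends in such a way that no single component contains ends from two different $E_i$. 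Any grand arc joins an end in some $E_i$ to an end in some $E_j$ with $i \ne j$, and if these two ends lie in distinct components of $S^c$, the arc must cross $S$.

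First I would fix, for each $i$, an end $e_i \in E_i$, and choose pairwise disjoint clopen neighborhoods $U_1, \dots, U_n \subset E(\Sigma)$ with $e_i \in U_i$ such that each $U_i$ meets only the class $E_i$ among the maximal ends — this is possible because the $E_i$ are disjoint closed (indeed clopen, since each is self-similar and there are finitely many) subsets of the compact totally disconnected space $E(\Sigma)$, so they can be separated by clopen sets. Next I would produce a compact subsurface $K \subset \Sigma$, with a single boundary curve, realizing this separation: using the identification of ends with nested open sets and the characterization of a subsurface ``seeing'' the separation, I would take $K$ large enough that $\Sigma \ssm K$ has exactly $n$ noncompact components, the $i$-th carrying exactly the ends in $U_i$ (and in particular all of $E_i$). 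The subsurface $S$ I want is then a regular neighborhood of $K$ together with the separating structure; concretely, I would arrange $S$ to be planar with $n$ boundary curves, one ``facing'' each $U_i$, i.e., homeomorphic to the $n$-times punctured sphere after capping. The key point is that $S$ is essential (it is not a disk, annulus, or boundary-parallel piece since $n \ge 2$) and separates each $E_i$ from each $E_j$.

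The verification that $S$ \emph{sees} every grand arc is then immediate from the \emph{separates} definition: a grand arc $\alpha$ limits to a maximal end $x \in E_i$ and a maximal end $y \in E_j$ with $i \ne j$; since $S$ separates $E_i$ from $E_j$, the ends $x$ and $y$ lie in different components of $S^c$, so any representative of $\alpha$ must intersect any subsurface homotopic to $S$. Hence $S$ is a witness, and by construction it is homeomorphic to the $|\calS(\Sigma)|$-times punctured sphere. I would also note essentiality holds because a witness must be separating and a nonseparating or peripheral piece cannot simultaneously separate all the distinct classes.

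\textbf{The main obstacle} I anticipate is the construction of the compact subsurface $K$ whose complementary components realize exactly the desired clopen partition of the maximal ends with a single ``tube'' to each class, and in particular arranging that each noncompact complementary component carries \emph{all} of $E_i$ (not merely some of it) while remaining planar of the right topological type. This requires care with the inverse-limit description of ends and with the fact that the $E_i$ may be Cantor sets rather than isolated points, so the separating curves must be chosen deep enough in the exhaustion that the clopen sets $U_i$ have stabilized; here self-similarity of the $E_i$ and finiteness of the splitting are exactly what make such a finite-type $K$ available. Once $K$ is in hand, the remaining steps are formal consequences of the definitions of \emph{separates} and \emph{sees}.
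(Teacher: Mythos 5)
Your overall strategy is the same as the paper's in outline (separate the classes $E_i$ by disjoint clopen sets $U_i$, produce a planar finite-type subsurface whose complementary components realize that partition, and then the ``sees'' verification is immediate), but the step you explicitly defer as ``the main obstacle'' is precisely the content of the paper's proof, and your proposed mechanism for it does not work as stated. The paper's key idea is to use Richards' classification \emph{constructively}: after enlarging the $U_i$ to a disjoint clopen \emph{cover} of $E(\Sigma)$ (the leftover ends are absorbed into $U_1$), one builds surfaces $\Sigma_i$ with one boundary component realizing the end spaces $(U_i, U_i\cap E^G)$, absorbs any finite genus of $\Sigma$ into $\Sigma_1$, and glues them to a sphere $W$ with $n$ boundary components; by the uniqueness part of the classification this model surface is homeomorphic to $\Sigma$, and the image of $W$ under the homeomorphism is the witness. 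Your alternative --- ``take $K$ large enough in a compact exhaustion so that the clopen sets have stabilized'' --- does not deliver the required subsurface: pieces of a compact exhaustion can carry genus (so are not punctured spheres), can have any number of boundary components, and nothing forces their noncompact complementary components to align with the $U_i$ (a single complementary component can contain ends from several $U_i$ at every stage). Some device such as the model-surface argument, or at least the curve-separation lemma (Lemma 2.1 of Fanoni--Ghaswala--McLeay, the paper's Lemma \ref{Lemma:SCCSeparatesEnds}) applied curve by curve together with genus-shifting, is needed; and note that self-similarity of the $E_i$, which you invoke as what ``makes $K$ available,'' plays no role in this step.

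Two further local errors. First, your parenthetical claim that the $E_i$ are clopen is false: they are closed (each finite or a Cantor set), but a maximal end can be a limit of non-maximal ends, e.g.\ the limit point of a convergent sequence of planar ends, so $E_i$ need not be open. Your conclusion that disjoint clopen neighbourhoods $U_i \supset E_i$ exist is still correct, but the correct justification is the one the paper gives (normality plus a clopen basis in a compact, totally disconnected, metrizable space, plus compactness of each $E_i$). Second, your description of $K$ is internally inconsistent: a compact subsurface with a single boundary curve has exactly one complementary component, so it cannot have $n\ge 2$ noncompact complementary components; the witness must have (at least) $n$ boundary curves, one facing each $U_i$.
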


\begin{remark}
If we assume that $\Sigma$ is a surface whose mapping class group is CB-generated, then this 
follows immediately from Proposition 5.4 
of \cite{MR}
\end{remark}
\begin{proof}
Let $\calM(\Sigma) = E_1 \sqcup \cdots \sqcup E_n$ 
be the grand splitting of $E = E(\Sigma)$, 
and let $U_1,\hdots, U_n$ be a disjoint clopen 
cover of $E$ such that $E_i \subset U_i$. 

To see why the cover $U_1, \ldots, U_n$ exists, we first note that due to Proposition 4.7 of \cite{MR}, the space of maximal ends is closed and the equivalence class of any maximal end is either finite or a Cantor set.
The end space is Hausdorff and compact, so it is a normal Hausdorff space.
Therefore, we can find a finite collection of disjoint open sets, $\{V_i\}_{i=1}^n$, such that $E_i\subset V_i$. 
Since $E$ is a totally disconnected metrizable compact space, $E$ has a 
basis of clopen sets. Thus, 
for every $i$ and every $e\in E_i$, 
there exists a clopen neighbourhood $e\in V_e \subset V_i$. By compactness of $E_i$, a finite union of 
the neighborhoods $V_e$ cover $E_i$ and is contained in $V_i$. Define $U_i'$ to be this finite union. 
Thus, we have found clopen sets 
$U_1',\hdots, U_n'$ such that $E_i\subset U_i'$. 
Setting 
$U_1 = U_1' \cup (E \ssm \cup_i U_i)$ 
and $U_i = U_i'$ for $i\ge 2$ gives us the 
desired cover.

Let $U_i^G$ denote the ends of $\Sigma$ accumulated 
by genus that are in $U_i$. By Richard's 
classification \cite{R}, there exist 
surfaces $\Sigma_i$, each with a single 
boundary component $\eta_i$, whose end spaces 
are homeomorphic to $(U_i,U_i^G)$. If the genus of $\Sigma$ is finite and equal 
to $g$, we redefine $\Sigma_1$ to be the 
connect sum of $\Sigma_1$ and a genus $g$ surface.

Let $W$ be the sphere with $n$ connected boundary 
components, labelled $\gamma_1,\ldots,\gamma_n$. By construction, and through Richard's 
classification, $\Sigma$ is homeomorphic to $S\cup_{\eta_i\sim\gamma_i} \Sigma_i$, and the 
image of $W$ under any homeomorphism is a witness.
\end{proof}

\section{Disjoint Witnesses and the Grand Splitting}\label{Sec:DisjointWitnesses}

The goal of this section is to prove a lemma which describes when the grand arc graph contains two disjoint witnesses.
This lemma will help prove when the grand arc graph is $\delta$-hyperbolic.
For the remainder of this paper, let $\Sigma$ be an infinite-type surface with a finite grand splitting, and let $E$ be its end space.

To begin, we state a lemma which shows that there exists a simple closed curve which separates any two disjoint clopen neighborhoods in the end space.
This lemma is a restatement of Lemma 2.1 from \cite{FGM}.

\begin{lemma} \label{Lemma:SCCSeparatesEnds}
Let $E_1,E_2$ be two disjoint clopen neighbourhoods of the endspace, $E = E(\Sigma)$. 
Then there exists a simple closed curve $\gamma$ in $\Sigma$ such that $\gamma$ separates the ends in $E_1$ from those in $E_2$.
\end{lemma}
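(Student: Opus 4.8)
The plan is to prove \lemref{Lemma:SCCSeparatesEnds} by first choosing a suitable compact exhaustion of $\Sigma$ that respects the clopen decomposition of the end space, and then extracting a separating curve as (part of) the boundary of a compact piece. The key observation is that $E_1$ and $E_2$ are clopen, hence compact, and disjoint, so I want a single compact subsurface $K\subset\Sigma$ whose complementary components cleanly partition the ends, with all of $E_1$ lying in one collection of components and all of $E_2$ in a disjoint collection.

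\medskip

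First I would recall that $E(\Sigma)=\varprojlim \pi_0(\Sigma\ssm K)$, so a clopen set of ends pulls back, for a sufficiently large compact $K$, to a union of connected components of $\Sigma\ssm K$. Concretely, since $E=E_1\sqcup E_2\sqcup (E\ssm(E_1\cup E_2))$ is a decomposition into clopen sets and $E$ is compact, I can find a compact subsurface $K$ (with finitely many boundary components, by enlarging $K$ to have nice boundary) such that each connected component $V$ of $\Sigma\ssm K$ has its end set contained entirely in one of $E_1$, $E_2$, or the remainder; equivalently, the finite set $\pi_0(\Sigma\ssm K)$ splits into three groups according to which clopen piece the corresponding ends belong to. This is the standard fact that clopen subsets of the end space are ``eventually'' unions of complementary-component end-sets, and it is exactly the content being borrowed from Lemma 2.1 of \cite{FGM}.

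\medskip

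Next, having such a $K$, I would let $A$ be the union of $K$ together with all complementary components whose ends lie in $E_1$ (a finite enlargement since only finitely many boundary components of $K$ are involved), and take $\gamma$ to be a curve separating $A$ from the components carrying $E_2$. More carefully, the boundary $\partial A$ consists of finitely many simple closed curves; I can either take one of these that separates $E_1$ from $E_2$, or, to get a single connected separating curve, attach a collar and band the boundary components of $K$ together appropriately so that a single simple closed curve $\gamma$ bounds on one side a region containing all ends of $E_1$ and on the other a region containing all ends of $E_2$. Because the ends of $E_1$ and $E_2$ lie in distinct complementary components of $\Sigma\ssm\gamma$, this $\gamma$ separates $E_1$ from $E_2$ as required.

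\medskip

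The main obstacle is the bookkeeping needed to guarantee that a \emph{single connected} simple closed curve does the separating, rather than a multicurve: a priori the ends of $E_1$ may be distributed among several complementary components of $K$, so the naive boundary is a finite collection of curves. The resolution is to enlarge $K$ to a connected compact subsurface $A\supset K$ that absorbs all the $E_1$-components into one side and is chosen so that $\partial A$ contains a single curve separating $E_1$ from everything else; alternatively, one invokes the cited \cite{FGM} statement directly, which already packages this. I expect the cleanest writeup to lean on the \cite{FGM} result for the existence of a separating multicurve and then to note that connectedness of $\Sigma$ lets one choose the compact piece so that one boundary curve alone separates $E_1$ from $E_2$.
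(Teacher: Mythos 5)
Your proposal is correct, but there is nothing in the paper to compare it against line by line: the paper gives no proof of \lemref{Lemma:SCCSeparatesEnds} at all, importing it verbatim as Lemma 2.1 of \cite{FGM}. Your exhaustion argument is the standard direct proof and it does go through. Since $E_1$, $E_2$ and $E\ssm(E_1\cup E_2)$ are clopen, hence compact, there is a connected compact subsurface $K\subset\Sigma$ with finitely many boundary circles such that every component of $\Sigma\ssm K$ has its end set contained in exactly one of the three pieces, and you correctly isolate the only real issue: the curves of $\del K$ facing the $E_1$-components form a multicurve, so no single one of them need separate $E_1$ from $E_2$. Your banding fix is the right one, and can be made precise as follows: let $D$ be the set of boundary curves of $K$ adjacent to $E_1$-components, connect the curves of $D$ by a tree of disjoint embedded arcs in $K$, and let $R\subset K$ be a regular neighbourhood of their union; then $R$ is connected and planar, $\del R$ consists of $D$ together with exactly one further curve $\gamma$, and the subsurface $Y$ obtained by gluing $R$ to the closures of the $E_1$-components along $D$ has $\del Y=\gamma$, so $\gamma$ separates $\Sigma$ with every end of $E_1$ on the $Y$ side and every end of $E_2$ on the other side. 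Two quibbles: the set you call $A$ is not compact (it contains ends), so ``boundary of a compact piece'' is a misnomer; and the ``standard fact'' that clopen sets of ends are eventually unions of complementary-component end sets is \emph{not} the content of \cite{FGM} Lemma 2.1 --- that lemma is precisely the separating-curve statement being proved here. Finally, an alternative route, in the style this paper actually uses for its neighbouring results (\lemref{lem:witness-exists} and the $1\Rightarrow 3$ step of \lemref{Lem:TwoEquivalenceClasses}), is to build a model surface by gluing along a circle two surfaces whose end spaces are $E_1$ and $E\ssm E_1$ and then invoke Richards' classification to identify the model with $\Sigma$; your argument is more hands-on and avoids the classification theorem, at the cost of the regular-neighbourhood bookkeeping above.
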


\begin{remark}
If $E_i$ consists of a single planar end
and $\Sigma$ has genus, we can choose $\gamma$ to be essential by shifting one genus to $\Sigma_i$. Furthermore, if $\Sigma$ has at least $g$ genus, we can choose $g$ simple closed curves which are pairwise disjoint, and which separate $E_1$ from $E_2$.
\end{remark}

\subsection{Disjoint Witnesses and The Endspace}

In this section, we prove that the grand splitting contains exactly two equivalence classes of maximal ends if and only if the grand arc graph admits at least two disjoint witnesses.

\begin{lemma}\label{Lem:TwoEquivalenceClasses}
Let $\calG(\Sigma)$ be nonempty and let $|\calS(\Sigma)|<\infty$. Suppose that $\Sigma$ is neither the once-punctured Cantor tree, nor the cycloctopus (the once-punctured, genus 1 Cantor tree). Then the following are equivalent:
\begin{itemize}
    \item[(1)] $|\calS(\Sigma)| = 2$.
    \item[(2)] $\calG(\Sigma)$ has at least two disjoint witnesses.
    \item[(3)] $\calG(\Sigma)$ has at least two disjoint witnesses that are annuli.
    \item[(4)] $\calG(\Sigma)$ has at least two disjoint witnesses, $W_1$ and $W_2$, such that $\chi(W_i) \leq -2$.
\end{itemize}
\end{lemma}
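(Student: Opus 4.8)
The plan is to prove the cycle of implications $(4)\Rightarrow(3)\Rightarrow(2)\Rightarrow(1)\Rightarrow(4)$, where the first two implications are immediate and the substantive content lies in $(2)\Rightarrow(1)$ and $(1)\Rightarrow(4)$. The implication $(4)\Rightarrow(3)$ is trivial since any two disjoint witnesses with $\chi \le -2$ are in particular two disjoint witnesses, and $(3)\Rightarrow(2)$ is likewise immediate since annular witnesses are witnesses. Thus the two interesting directions are the constructive $(1)\Rightarrow(4)$ and the structural $(2)\Rightarrow(1)$.

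\textbf{The implication $(1)\Rightarrow(4)$.} Assume $\calS(\Sigma) = E_1 \sqcup E_2$. The idea is to split each $E_i$ into two clopen pieces and use these to build two genuinely different witnesses that are each separating and each see every grand arc. Since every grand arc has one endpoint converging into $E_1$ and the other into $E_2$, any simple closed curve separating $E_1$ from $E_2$ (which exists by \lemref{Lemma:SCCSeparatesEnds}) must be crossed by every grand arc; this already produces witnesses. To get two \emph{disjoint} witnesses with $\chi(W_i)\le -2$, I would use self-similarity: each $E_i$ is either a singleton or a Cantor set, and since we have excluded the once-punctured Cantor tree and the cycloctopus, the surface has enough topological complexity (extra ends, genus, or a Cantor-set class admitting a splitting $E_i = E_i' \sqcup E_i''$ with both pieces homeomorphic to $E_i$) to carve out two disjoint subsurfaces, each separating a copy of $E_1$ from a copy of $E_2$, and each of sufficiently negative Euler characteristic. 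Concretely, I would apply \lemref{Lemma:SCCSeparatesEnds} twice on disjoint clopen decompositions and then, using the remark after that lemma to absorb genus, enlarge each separating region to a subsurface $W_i$ with $\chi(W_i)\le -2$ while keeping $W_1,W_2$ disjoint. Verifying that each $W_i$ still sees every grand arc reduces to checking that every grand arc must cross each separating curve, which follows because each $W_i$ separates a full representative of $E_1$ from a full representative of $E_2$.

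\textbf{The implication $(2)\Rightarrow(1)$.} This is the contrapositive heart of the lemma: if $|\calS(\Sigma)| \ge 3$, I must show no two disjoint witnesses can exist. Suppose $W_1, W_2$ are disjoint witnesses. By the discussion following the witness definition, each $W_i$ is separating and separates self-similar equivalence classes of maximal ends. The key observation is that a witness must separate \emph{every} pair of distinct classes $E_j, E_k$, since a grand arc can run between any two distinct classes and the witness must cross it. With $n = |\calS(\Sigma)| \ge 3$ classes, I would analyze the combinatorics of the complementary components of $W_1 \cup W_2$: being disjoint, the two witnesses partition the end space into regions, and I would argue that one can always find a pair of distinct maximal classes lying in the same complementary component of one of the witnesses, or else construct a grand arc disjoint from that witness, contradicting the witness property. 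The cleanest route is to show that a single separating surface separating all $\binom{n}{2}$ pairs of classes forces a specific topological type (genus-zero, $n$-boundary neighborhood of the classes, as in \lemref{lem:witness-exists}), and that two disjoint copies cannot simultaneously achieve this when $n \ge 3$ without one of them failing to see some grand arc running between two classes it fails to separate.

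\textbf{The main obstacle.} I expect the hardest step to be $(2)\Rightarrow(1)$, specifically ruling out exotic configurations when $n \ge 3$: one must carefully argue that disjointness of the two witnesses is incompatible with both of them separating all pairs of maximal classes. The subtlety is that ends are not points but self-similar Cantor sets or singletons, so ``separating all pairs'' is a statement about clopen partitions, and the excluded cases (Cantor tree, cycloctopus) are precisely the degenerate configurations where disjoint witnesses survive. The careful work is to show that for any surface outside those exceptions, the presence of a third class $E_3$ lets one route a grand arc avoiding at least one of the two supposedly-disjoint witnesses — this is where the self-similarity of the classes and the exclusion hypotheses must be invoked to guarantee enough room in the complementary regions.
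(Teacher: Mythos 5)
Your cycle is broken at statement (3), and this is a genuine gap rather than a bookkeeping slip. You claim $(4)\Rightarrow(3)$ is trivial, but your justification (``two disjoint witnesses with $\chi\le -2$ are in particular two disjoint witnesses'') proves $(4)\Rightarrow(2)$, not $(4)\Rightarrow(3)$: an annulus has $\chi=0$, so a witness with $\chi\le-2$ is never an annulus, and nothing you write produces annular witnesses from anything. With your remaining implications you obtain at best the equivalence of (1), (2), (4) together with $(3)\Rightarrow(2)$, so (3) is never established. Moreover, producing disjoint \emph{annular} witnesses is not a formality: as the remark after the lemma in the paper points out, the cycloctopus has two disjoint essential witnesses but one \emph{cannot} take both to be annuli, so the passage to (3) is exactly where the exclusion hypotheses bite. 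The paper handles this by proving $(1)\Rightarrow(3)$ directly via a cut-and-paste construction: write $E = E_1\sqcup E_2\sqcup U$ with $U$ the non-maximal ends; if the genus is at least $2$ use the remark after \lemref{Lemma:SCCSeparatesEnds}, and otherwise the exclusion of the once-punctured Cantor tree and the cycloctopus forces $U$ to be infinite, so one can choose disjoint clopen $F_1,F_2\subset U$, realize $\Sigma$ as three pieces glued along two curves (with $E_1\sqcup F_1$ beyond one curve and $E_2\sqcup F_2$ beyond the other), and those two gluing curves are the disjoint annular witnesses; then $(3)\Rightarrow(4)$ is a separate small argument enlarging each complementary side of an annular witness to a witness with $\chi\le-2$.

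There is also a flaw inside your $(1)\Rightarrow(4)$ sketch: one of your three proposed mechanisms, splitting a Cantor-set class as $E_i = E_i'\sqcup E_i''$ and separating ``a copy of $E_1$ from a copy of $E_2$,'' cannot produce witnesses. When $|\calS(\Sigma)|=2$, a subsurface $W$ is a witness precisely when no complementary component of $W$ contains ends of both $E_1$ and $E_2$; if any portion of a maximal class sits between the two disjoint witnesses (or on the wrong side of one of them), a grand arc from that portion to the other class misses one witness. So the ``padding'' between two disjoint nested witnesses can only consist of genus or non-maximal ends --- which is exactly the dichotomy the excluded surfaces guarantee, and which your sketch gestures at but does not isolate or verify. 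Your $(2)\Rightarrow(1)$, by contrast, is morally the paper's argument (the component of $\Sigma\setminus W_i$ containing the other witness can meet at most one class, and the region between the witnesses contains no maximal ends), though the paper runs it directly in two lines rather than as a contrapositive case analysis; that direction of your plan would go through once written out.
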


\begin{proof}
 \ \vspace{0.1cm}

\noindent \textit{1 $\implies$ 3:}
Let $\calM(\Sigma) = E_1\sqcup E_2$ be the grand splitting of $\Sigma$. Note that if the genus of $\Sigma$ is at least $2$, we can use the remark following Lemma ~\ref{Lemma:SCCSeparatesEnds} to prove the claim. Thus we assume that the genus of $\Sigma$ is at most $1$. 
Since $\Sigma$ is not the cycloctopus, it follows that if the genus of $\Sigma$ is at most $1$, 
then there must be non-maximal predecessor ends to either the ends in $E_1$ or the ends in $E_2$.

We write $E = E_1\sqcup E_2 \sqcup U$, 
where $U = (E_1\cup E_2)^c$.
Note that $U$ must have an infinite number of ends, else $U$ would contain a maximal end itself, a contradiction.
Let $F_1$ and $F_2$ be disjoint clopen
subsets of $U$. We can choose the sets $F_1$ and $F_2$ because $U$ is an infinite open subset of a separable totally disconnected, compact topological space.

Using the classification of infinite-type surfaces \cite{R}, we construct three surfaces as follows; let:

\begin{itemize}
\item $\Sigma_1$ be the surface of genus $0$ with end space equal to $E_1\sqcup F_1$ and one boundary component, $\gamma_1$.
\item $\Sigma_2$ be the genus $g(\Sigma)$ surface with end space equal to $U\ssm (F_1\cup F_2)$, and two boundary components, $\eta_1$ and $\eta_2$.
\item $\Sigma_3$ be the surface of genus $0$ with end space equal to $E_2 \sqcup F_2$, and one boundary component, $\gamma_2$.
\end{itemize}

Gluing together these surfaces along 
$\gamma_i \sim \eta_i$ yields a surface $\Sigma'$ 
with end space isomorphic to $E$, 
and genus equal to the genus of $\Sigma$. 
By the Classification of Infinite-Type 
Surfaces, $\Sigma' = \Sigma$.
The image of $\gamma_i \sim \eta_i$ in $\Sigma'$ 
give core curves of two disjoint, essential witnesses that are annuli.

\noindent \textit{3 $\implies$ 2:}
This direction follows trivially.

\noindent \textit{2 $\implies$ 1:}
Suppose that $\calG(\Sigma)$ has at least two 
witnesses, $W_0$ and $W_1$. Let $\Sigma_i$ 
be the connected component of $\Sigma \ssm W_i$ 
which contains $W_{1-i}$. If $W_i$ is a witness, 
then $\Sigma_i$ must contain ends from exactly 
one subset $E_i \in \calS(\Sigma)$. Noting 
that $\Sigma = \Sigma_1\cup \Sigma_2$ and that 
$\Sigma_1\cap \Sigma_2$ cannot contain any maximal 
ends yields that $|\calS(\Sigma)| = 2$.

\noindent \textit{4 $\implies$ 2:}
This direction follows trivially.

\noindent \textit{3 $\implies$ 4:}
Let $\gamma$ be a witness which is an annulus.
Then $\gamma$ separates $\Sigma$ 
into two connected components $\Sigma_1$ and $\Sigma_2$. 
We show that $\Sigma_1$ contains a witness $S$ 
with $\chi(S) \le -2$. 
We note that any subsurface $S$ containing $\gamma$ must 
be a witness.

If $\Sigma_1$ is of finite-type, then the only cases 
where $\chi(\Sigma_1) > -2$ are when $\Sigma_1$ 
is an annulus or a pair-of-pants. The former case 
is excluded by $\gamma$ being essential. In the latter 
case, the two boundary components of $\Sigma_1$ 
which are not $\gamma$ must be isolated punctures, meaning that $\gamma$ is 
not a witness, a contradiction.

If $\Sigma_1$ is of infinite-type, then any compact 
exhaustion of $\Sigma_1$ which contains the boundary 
component $\gamma$ must eventually be the desired witness.

Applying the same argument to $\Sigma_2$ yields 
two disjoint witnesses of sufficient complexity, as desired.
\end{proof}

\begin{remark}
    If $\Sigma$ is the cycloctopus (once-punctured Cantor tree of genus $1$), we have 
    $|\calS(\Sigma)| = 2$, and 
    $\calG(\Sigma)$ has two disjoint 
    essential witnesses. See Figure \ref{Fig:CyclopotopusWitnesses}. However, we cannot choose both of these witnesses to be annuli. 
\end{remark}

\section{Connectivity of the grand arc graph}\label{Sec:Connectivity}

In this section, we show that the grand arc graph has infinite diameter. 
In addition, we show that for surfaces which have at least three elements in the grand splitting, the grand arc graph is connected.

To show that the grand arc graph has infinite diameter, we will use subsurface 
projections, following the ideas of Schleimer \cite{S}, 
to project the grand arc graph to the arc graph of a witness. To show that $\calG(\Sigma)$ is connected, we will use unicorn paths as in \cite{HPW}, but extended to the setting of infinite-type surfaces as in \cite{FGM}.

To begin, we prove the following lemma which shows that the subsurface projection of a grand arc to a finite-type subsurface ${S\subset \Sigma}$ will have finitely many connected components.
In the statement of the lemma, below, we let $\pi_S(\alpha)$ denote the 
subsurface projection of $\alpha$, as defined in  \cite{MM2}.
Namely, we define the projection map, $\pi_S(\alpha)$, from the grand arc graph, $\calG(\Sigma)$, to the arc graph, $\calA(S)$, by setting $\pi_S(\alpha)$ to be the multi-arc $S\cap \alpha$.

\begin{lemma}
Let $\alpha\in \calG(\Sigma)$ be a grand arc, and let $S\subset\Sigma$ be a finite-type 
subsurface whose boundary is in minimal 
position with $\alpha$. 
Then $\pi_S(\alpha)$ has finitely many 
(possibly zero) connected components.
\end{lemma}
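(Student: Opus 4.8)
The plan is to reduce the statement to a single finiteness fact about the intersection $\alpha\cap\del S$, and to extract that finiteness from the defining property of a grand arc, namely that it \emph{converges} to its two endpoint ends. The subtlety to overcome is genuinely present: on an infinite-type surface a simple arc and a simple closed curve can meet infinitely often by spiralling out toward an end, and it is precisely the convergence of $\alpha$ that rules this out.

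\emph{Confinement.} First I would show that every intersection of $\alpha$ with $\del S$ occurs in a compact sub-arc. Write $e_1,e_2$ for the two maximal ends to which $\alpha$ converges. Since $S$ is finite type, $\del S$ is a finite disjoint union of simple closed curves and is therefore a compact subset of $\Sigma$. The end compactification $\bar\Sigma = \Sigma \sqcup E(\Sigma)$ is compact Hausdorff and $e_1,e_2 \notin \del S$, so I can separate each $e_i$ from the compact set $\del S$, obtaining neighbourhoods $U_i$ of $e_i$ in $\bar\Sigma$ with $U_i \cap \del S = \emptyset$. By the definition of convergence there are parameters $0 < t_0 < t_1 < 1$ with $\alpha((0,t_0)) \subset U_1$ and $\alpha((t_1,1)) \subset U_2$, whence
\[
\alpha \cap \del S \;\subset\; \alpha([t_0,t_1]).
\]
This is the conceptual heart of the argument and the only place the grand hypothesis is used.

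\emph{Finiteness of crossings.} Second I would show $\alpha$ meets $\del S$ in finitely many points. Being in minimal position, $\alpha$ and $\del S$ may be taken transverse, so each intersection point is isolated in $\Sigma$; the intersection is thus a discrete, closed subset of the compact arc $\alpha([t_0,t_1])$, hence finite. (Equivalently, realising $\alpha$ and $\del S$ as geodesics in a complete hyperbolic metric on $\Sigma$, a compact geodesic segment crosses only finitely many lifts of $\del S$ in $\HH^2$, since two distinct geodesics cross at most once.) Therefore $\alpha^{-1}(\del S) = \{s_1 < \cdots < s_m\}$ is a finite subset of $[t_0,t_1]$.

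\emph{Counting components.} Finally I would count the components of $\pi_S(\alpha) = S\cap\alpha$. The points $s_1,\dots,s_m$ cut $(0,1)$ into at most $m+1$ open intervals, and on each such interval $\alpha$ avoids $\del S$ and hence remains in a single connected component of $\Sigma \ssm \del S$, either inside $S$ or inside its complement. Each maximal interval mapping into $S$ contributes exactly one component of the multi-arc $S\cap\alpha$ — a component with both ends on $\del S$, or, in the case that some $e_i$ lies in $S$, a half-open component running out to that puncture. Hence $\pi_S(\alpha)$ has at most $m+1$ components, and is empty precisely when $\alpha$ never enters $S$; this proves the lemma. The main obstacle is the confinement step, as the finiteness of the intersection can fail for a simple arc that does not converge to ends.
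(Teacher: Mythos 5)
Your proof is correct and follows essentially the same route as the paper's: both arguments hinge on the confinement step, using the convergence of the grand arc to its two ends to trap $\alpha \cap \del S$ inside a compact subarc, and then deduce finiteness of the intersection points. The only difference is cosmetic --- where the paper encloses $S$, $\del S$, and the compact segment in a finite-type subsurface $K$ and invokes geodesic realization there, you conclude finiteness directly from transversality plus compactness (keeping the geodesic argument as an aside), and you add an explicit count of components, which the paper leaves implicit.
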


\begin{remark}
The condition on $\del S$ can be automatically satisfied 
for any grand arc if we take a homotopy class of $S$ with geodesic boundary with respect to some hyperbolic metric on $\Sigma$.
\end{remark}

\begin{proof}
The boundary of $S$ is a finite collection of 
simple closed curves on $\Sigma$ in minimal 
position with respect to $\alpha$. Reparametrizing the image of $\alpha$ as  $\alpha:\RR \to \Sigma$, We find that there exists 
an $M > 0$ such that the restriction of $\alpha$ to $[-M,M]^c$, $\alpha_{[-M,M]^c}$, is disjoint from 
$\del S$ (this is because $\alpha$ is a grand arc). Note 
that the image of $[-M,M]$ under $\alpha$ is a compact 
arc segment, so there exists a finite-type subsurface
$K \subset \Sigma$ containing $S, \del S$ and $\alpha_{[-M,M]}$. 
The lemma follows by noticing that $\alpha$ and 
$\del S$ can be realized as geodesics (in some  hyperbolic metric on $\Sigma$) in minimal position on the finite type surface $K$.
\end{proof}

For a given finite-type surface $S\subset \Sigma$, let $\calA(S)$ be the arc complex of $S$, and let $\alpha$ and $\beta$ be two multi-arcs.
Then, as in \cite{MT}, we may define the distance between two multi-arcs by:
\begin{align*}
d_{\calA(S)}(\pi_S(\alpha),\pi_S(\beta)) 
= \inf \{d_{\calA(S)}(a,b) : a\in \pi_S(\alpha), b\in \pi_S(\beta) \}
\end{align*}
When $\alpha$ and $\beta$ are grand arcs, and $W$ is a witness, the above infimum is realized as a minimum.

Let $W$ be a witness of the grand arc graph.
The next lemma shows that the distance of two grand arcs in the grand arc graph is bounded below by the distance of their subsurface projections to the arc graph of the witness, $\calA(W)$.

\begin{lemma}\label{Lem:SubsurfaceProjections}
Let $W\subset \Sigma$ be a witness of $\calG(\Sigma)$. Let $\alpha, \beta \in \calG(\Sigma)$ be grand arcs which are in minimal position 
with respect to $\del W$. Then
\begin{align*}
d_{\calA(W)}(\pi_W(\alpha),\pi_W(\beta)) \le
d_{\calG(\Sigma)}(\alpha,\beta).
\end{align*}
\end{lemma}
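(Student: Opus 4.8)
The plan is to show that the subsurface projection map $\pi_W$ is Lipschitz (in fact 1-Lipschitz on edges, which gives the inequality) by reducing to the claim that adjacent vertices in $\calG(\Sigma)$ have projections that are either empty or at distance at most $1$ in $\calA(W)$. Since the metric on both $\calG(\Sigma)$ and $\calA(W)$ is a path metric with unit edge-lengths, it suffices to verify the bound along a single edge: if $d_{\calG(\Sigma)}(\alpha,\beta) = 1$, then $\alpha$ and $\beta$ admit disjoint representatives, and I would show $d_{\calA(W)}(\pi_W(\alpha),\pi_W(\beta)) \le 1$. The general inequality then follows by summing along a geodesic path from $\alpha$ to $\beta$ in $\calG(\Sigma)$, using the triangle inequality in $\calA(W)$.

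First I would fix disjoint representatives of $\alpha$ and $\beta$, each in minimal position with $\del W$, which is possible since $W$ has finite-type geodesic boundary and each grand arc eventually leaves any compact set toward its two ends. Because $W$ is a witness, it \emph{sees} every grand arc, so both $\pi_W(\alpha) = W\cap\alpha$ and $\pi_W(\beta) = W\cap\beta$ are nonempty multi-arcs in $\calA(W)$; this is exactly where the witness hypothesis is essential, since it guarantees the projections involved are never empty and the distance is well-defined. Since $\alpha$ and $\beta$ are disjoint in $\Sigma$, their restrictions $W\cap\alpha$ and $W\cap\beta$ are disjoint multi-arcs in $W$. Disjoint multi-arcs represent vertices (or simplices) in $\calA(W)$ that span an edge, so any component of $\pi_W(\alpha)$ is at distance at most $1$ from any component of $\pi_W(\beta)$ in $\calA(W)$; taking the infimum over components gives $d_{\calA(W)}(\pi_W(\alpha),\pi_W(\beta)) \le 1$, as desired.

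To conclude the general statement, I would take a geodesic edge-path $\alpha = \gamma_0, \gamma_1, \ldots, \gamma_k = \beta$ in $\calG(\Sigma)$ with $k = d_{\calG(\Sigma)}(\alpha,\beta)$, put each $\gamma_j$ in minimal position with $\del W$, and apply the single-edge bound to each consecutive pair to get $d_{\calA(W)}(\pi_W(\gamma_{j-1}),\pi_W(\gamma_j)) \le 1$. Since each $\gamma_j$ is a grand arc and $W$ is a witness, every $\pi_W(\gamma_j)$ is nonempty, so the triangle inequality in $\calA(W)$ applies without gaps, yielding
\begin{align*}
d_{\calA(W)}(\pi_W(\alpha),\pi_W(\beta)) \le \sum_{j=1}^{k} d_{\calA(W)}(\pi_W(\gamma_{j-1}),\pi_W(\gamma_j)) \le k = d_{\calG(\Sigma)}(\alpha,\beta).
\end{align*}

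The main obstacle I anticipate is a subtlety in the single-edge step: two grand arcs that are adjacent in $\calG(\Sigma)$ have disjoint representatives, but I must ensure those disjoint representatives can \emph{simultaneously} be placed in minimal position with $\del W$ without reintroducing intersections between $\alpha$ and $\beta$. This is the standard care needed for subsurface projections to be coarsely well-defined and Lipschitz; I would handle it either by choosing a fixed hyperbolic metric and taking geodesic representatives (disjoint geodesics stay disjoint, and geodesics are automatically in minimal position with the geodesic boundary $\del W$, as noted in the remark following the previous lemma), or by appealing to the fact established earlier that $\pi_W(\alpha)$ is a finite, well-defined multi-arc for any grand arc. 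Apart from this bookkeeping, the argument is essentially the standard Masur–Minsky Lipschitz projection estimate adapted to the infinite-type setting, with the witness condition playing the role that guarantees non-emptiness of all projections.
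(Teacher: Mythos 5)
Your proposal is correct and takes essentially the same route as the paper: the paper likewise follows a path $\alpha = \alpha_1,\hdots,\alpha_n = \beta$ in $\calG(\Sigma)$, uses the witness property to get non-empty projections, and uses disjointness of consecutive arcs to choose components $a_i \in \pi_W(\alpha_i)$ forming a path in $\calA(W)$. One small caution: the infimum-distance between multi-arcs is not itself a metric, so the chaining step should be phrased (as the paper does, and as your stronger single-edge observation---that \emph{every} component of $\pi_W(\gamma_{j-1})$ is disjoint from \emph{every} component of $\pi_W(\gamma_j)$---already permits) by fixing one component per projection and applying the genuine triangle inequality to those vertices of $\calA(W)$.
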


\begin{proof}
Let $\alpha = \alpha_1,\hdots,\alpha_n = \beta$ be a path in 
$\calG(\Sigma)$. Note that since $W$ is a witness, $\pi_S(\alpha_i)$ must be nonempty. 
For each $i$, choose an arc $a_i\in \pi_S(\alpha_i)$.
Since $\alpha_1, \ldots, \alpha_n$ is a path between $\alpha$ and $\beta$ in the grand arc graph, $a_1, \ldots, a_n$ is a path between $\pi_{W}(\alpha)$ and $\pi_{W}(\beta)$ in $\calA(W)$. The claim follows. 
\end{proof}

Since we are able to define projections from $\calG(\Sigma)$ to $\calA(W)$, where $W$ is a witness of $\calG(\Sigma)$, we can ask whether $\calG(\Sigma)$ is 
quasi-isometric to the subcomplex of $\calA(W)$ where we only consider the arcs which start 
and end at the boundary components which separate maximal ends.
While the general case is beyond the scope of this paper, there are examples where this is not the 
case, namely, where $|\calS(\Sigma)| = 3$, and 
$W$ is a pair of pants. In this case, 
$\calA(W)$ has finite diameter, contradicting the following lemma:

\begin{lemma}\label{Lem:GrandArcInfiniteDiameter}
If $1 < |\calS(\Sigma)| < \infty$, then $\calG(\Sigma)$ has infinite diameter.
\end{lemma}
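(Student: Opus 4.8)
The plan is to use the subsurface projection machinery already developed, reducing the infinite-diameter statement for $\calG(\Sigma)$ to an infinite-diameter statement on the arc graph of a single witness. By \lemref{lem:witness-exists}, since $\calG(\Sigma)$ is nonempty (which follows from $|\calS(\Sigma)|\ge 2$, as any arc joining ends in two distinct elements of the grand splitting is grand), there exists a witness $W\subset\Sigma$, and by \lemref{Lem:SubsurfaceProjections} we have the Lipschitz bound $d_{\calA(W)}(\pi_W(\alpha),\pi_W(\beta)) \le d_{\calG(\Sigma)}(\alpha,\beta)$ for grand arcs $\alpha,\beta$ in minimal position with $\del W$. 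Thus it suffices to exhibit a sequence of grand arcs whose projections to $\calA(W)$ (or to $\calC(W)$, the curve graph) are unbounded in distance.

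First I would choose the witness $W$ to have complexity large enough that its arc or curve graph has infinite diameter; since $W$ separates the self-similar classes, I can arrange (using \lemref{Lem:TwoEquivalenceClasses} when $|\calS(\Sigma)|=2$, or \lemref{lem:witness-exists} together with a genus or puncture shift in general) that $W$ is a finite-type surface with $\xi(W)$ large enough that $\calC(W)$ is infinite-diameter by Masur--Minsky \cite{MM1}. The key mechanism is then a pseudo-Anosov map $\vphi$ supported on $W$ and fixing $\del W$: by \thmref{thm:lox-action}, its extension $\bar\vphi$ acts on $\calG(\Sigma)$, and iterating $\bar\vphi$ on a fixed grand arc $\alpha_0$ produces grand arcs $\bar\vphi^n(\alpha_0)$ whose $W$-projections are the $\vphi^n$-images of $\pi_W(\alpha_0)$. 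Since $\vphi$ is pseudo-Anosov on $W$, these projections travel with positive speed in $\calC(W)$, so $d_{\calA(W)}(\pi_W(\alpha_0),\pi_W(\bar\vphi^n(\alpha_0)))\to\infty$.

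Combining this with the Lipschitz inequality of \lemref{Lem:SubsurfaceProjections} gives $d_{\calG(\Sigma)}(\alpha_0,\bar\vphi^n(\alpha_0))\to\infty$, establishing infinite diameter. The main obstacle I anticipate is purely bookkeeping at the boundary between the two regimes: when $|\calS(\Sigma)|=2$ one must be careful that a witness of complexity $\ge 2$ exists (which \lemref{Lem:TwoEquivalenceClasses}, condition (4), supplies outside the cycloctopus), while when $|\calS(\Sigma)|\ge 3$ one must confirm that the chosen finite-type $W$ from \lemref{lem:witness-exists} can be taken complex enough to support a pseudo-Anosov and that its arc-graph distance dominates the curve-graph distance in the standard way. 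A subtle point is that \thmref{thm:lox-action} is proved later in the paper, so to avoid circularity I would instead argue directly: the pseudo-Anosov $\vphi$ acts on $\calC(W)$ with unbounded orbits, and the projection $\pi_W$ intertwines $\bar\vphi$ with $\vphi$ on the image, which is elementary and does not rely on the loxodromic statement. This keeps the argument self-contained and logically prior to \thmref{thm:lox-action}.
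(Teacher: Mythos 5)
Your proof is correct, but it takes a genuinely different route from the paper. The paper's own proof is a direct surgery argument: after extending the witness $W$ so that $\calA(W)$ has infinite diameter, it picks two arcs $a,b\in\calA(W)$ with $d_{\calA(W)}(a,b)=n+4$, shows by a geodesic surgery that any arc can be replaced, at cost at most $2$, by an arc with endpoints on two prescribed boundary components $C_1,C_2$ of $W$ facing maximal ends, extends the resulting arcs $a',b'$ across the complement of $W$ to grand arcs $\alpha,\beta$ with $\pi_W(\alpha)=a'$ and $\pi_W(\beta)=b'$, and concludes $d_{\calG(\Sigma)}(\alpha,\beta)\ge n$ from \lemref{Lem:SubsurfaceProjections}. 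You instead use a dynamical mechanism: a pseudo-Anosov $\vphi$ supported on $W$ fixing $\del W$, extended by the identity to $\bar\vphi\in\Map(\Sigma)$, so that $\pi_W(\bar\vphi^n\alpha_0)=\vphi^n\pi_W(\alpha_0)$ and the Masur--Minsky positive translation length on $\calA(W)$ pushes the orbit to infinity; the same Lipschitz bound then finishes. Both routes share \lemref{lem:witness-exists} and \lemref{Lem:SubsurfaceProjections}, and your handling of the two delicate points is sound: the witness can indeed be extended to a finite-type subsurface of high enough complexity (any essential subsurface containing a witness is a witness), and you correctly avoid circularity by not invoking \thmref{thm:lox-action} and instead arguing the equivariance of $\pi_W$ directly (note that equivariance holds because $\bar\vphi$ fixes $\del W$ pointwise and is the identity outside $W$, and homeomorphisms fixing $\del W$ preserve minimal position, so the hypothesis of \lemref{Lem:SubsurfaceProjections} is maintained along the orbit). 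The trade-off: the paper's argument needs only the infinite diameter of $\calA(W)$ and produces, for every $n$, an explicit pair of grand arcs at distance at least $n$; your argument invokes the finer fact that pseudo-Anosovs have positive translation length on the arc graph, but in exchange yields a single mapping class with unbounded orbit on $\calG(\Sigma)$ --- which is exactly the mechanism the paper itself deploys later in Proposition~\ref{prop:TwoWitnessesNotHyperbolic} and Theorem~\ref{thm:lox-action}, so your proof effectively anticipates those results.
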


\begin{proof}
Let $W$ be a witness of $\Sigma$. After possibly extending $W$, we can assume that $W$ is a connected subsurface of $\Sigma$ for which $\calA(W)$ has infinite diameter \cite{MS}.

Let $a \in \calA(W)$ and let $C_1, C_2$ be two boundary components or punctures of $W$.
First, we will show that there exists an arc 
$a'' \in \calA(W)$ whose endpoints lie on $C_1$ and $C_2$, 
and $d_{\calA(W)}(a,a'') \le 2$. Fix a hyperbolic metric $d_W$ on $\Sigma$, and realize $W$ as a surface with geodesic boundary. Let $a_1$ and $a_2$ be the endpoints of the arc $a$, and let $x \in a$ and $y \in C_1$ be the points for which
\[
d_W(x,y) = \min\limits_{t\in a, s\in C_1} d_{W}(t,s).
\]
Notice, we can write $a = [a_1,x] \cup (x, a_2]$.
Let $[x,y]$ denote the geodesic segment from $x$ to $y$, and let 
$a' = [x,y] \cup (x,a_2]$. This arc has endpoints $a_2$ and $x\in C_1$, and is 
disjoint from $a$. Repeating this trick, we can get an arc $a''$ 
disjoint from $a'$ with endpoints in $C_1$ and $C_2$.

Fix any $n \in \NN$, and choose $a,b\in \calA(W)$ so that 
$d_{\calA(W)}(a,b) = n+4$. Let $C_1$ (resp. $C_2$), either be a maximal end $e_1$ (resp. $e_2$), contained in $S$, or let $C_1$ (resp. $C_2$) be a boundary component of $W$ which separates $\mathrm{int}(S)$ from a maximal end $e_1$ (resp. $e_2$). 
By the triangle inequality and the above paragraph, there exist arc segments $a'$ and $b'$ with
endpoints on $C_1$ and $C_2$ for which 
$d_{\calA(W)}(a',b') \ge n$. Extend the arc segments $a'$ and $b'$ to be grand arcs by 
adding arc segments from their endpoints to the ends $e_1$ and $e_2$. Call 
these new arcs $\alpha$ and $\beta$.
By construction, $\pi_W(\alpha) = a$ and $\pi_S(\beta) = b$, which implies by Lemma ~\ref{Lem:SubsurfaceProjections} that $d_{\calG(\Sigma)}(\alpha,\beta) \ge n$.
\end{proof}

Using Lemma \ref{Lem:GrandArcInfiniteDiameter}, we are now in a position where we can show that the grand arc graph is an infinite-diameter, connected metric space.\\

\begin{proposition} ~\label{prop:InfDiam}
If $|\calS(\Sigma)| \ge 3$, then 
$\calG(\Sigma)$ is an infinite-diameter, 
connected metric space.
\end{proposition}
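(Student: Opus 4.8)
The plan is to observe that infinite diameter follows directly from \lemref{Lem:GrandArcInfiniteDiameter}, since the hypothesis $|\calS(\Sigma)| \ge 3$ satisfies $1 < |\calS(\Sigma)| < \infty$. Thus the real content of the proposition is \emph{connectivity}, and that is where all the work lies. I would prove connectivity by taking two arbitrary grand arcs $\alpha, \beta \in \calG(\Sigma)$ and constructing a path between them in the graph. The natural strategy, as flagged in the outline before \lemref{Lem:GrandArcInfiniteDiameter}, is to adapt the \emph{unicorn path} construction of Hensel--Przytycki--Webb \cite{HPW} as carried out for infinite-type surfaces in \cite{FGM}. Given two grand arcs in minimal position, a unicorn arc is formed by following $\alpha$ from one of its endpoints to an intersection point with $\beta$, then switching to follow $\beta$ to one of \emph{its} endpoints; the key combinatorial fact is that consecutive unicorn arcs in the natural ordering are disjoint, so they form an edge-path in the arc graph.

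The main obstacle will be ensuring that the intermediate unicorn arcs are themselves \emph{grand arcs}, i.e.\ that they converge to two maximal ends lying in distinct elements of $\calS(\Sigma)$, and hence are genuine vertices of $\calG(\Sigma)$ rather than merely vertices of some larger arc graph. A unicorn arc built from $\alpha$ and $\beta$ inherits one endpoint from $\alpha$ and one from $\beta$; since each of $\alpha$ and $\beta$ has both endpoints at maximal ends in distinct grand-splitting classes, the inherited endpoints are automatically maximal, but they could in principle land in the \emph{same} class $E_i$, which would disqualify the unicorn arc. This is exactly the point where the hypothesis $|\calS(\Sigma)| \ge 3$ must be used: with at least three classes available, I expect to handle the bad configurations by a preliminary reduction, replacing $\alpha$ (or $\beta$) by a grand arc disjoint from it whose endpoints are positioned in convenient classes, so that every unicorn arc arising in the construction has its two endpoints in different classes. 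Concretely, I would first reduce to the case where $\alpha$ and $\beta$ share no endpoint class, inserting at most a bounded number of intermediate grand arcs using \lemref{Lemma:SCCSeparatesEnds} to separate the relevant clopen neighborhoods of ends.

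With that reduction in hand, the remaining steps are routine once the unicorn machinery is imported: put $\alpha$ and $\beta$ in minimal position (achievable by \remref{}-style geodesic representatives in a fixed hyperbolic metric, as used repeatedly above), enumerate the unicorn arcs $\alpha = c_0, c_1, \ldots, c_k = \beta$ associated to the intersection points, verify via the standard argument of \cite{HPW} that $c_j$ and $c_{j+1}$ admit disjoint representatives, and conclude that each consecutive pair spans an edge of $\calG(\Sigma)$. This yields a finite edge-path from $\alpha$ to $\beta$, establishing connectivity. Combining connectivity with the infinite diameter supplied by \lemref{Lem:GrandArcInfiniteDiameter} gives the proposition. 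I anticipate that verifying grandness of every unicorn arc is the delicate step; the disjointness of consecutive unicorns is standard, and infinite diameter is immediate.
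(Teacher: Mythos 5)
Your identification of the two halves---infinite diameter from \lemref{Lem:GrandArcInfiniteDiameter}, connectivity via unicorn paths---matches the paper, and you correctly isolate the delicate point: intermediate unicorns must themselves be grand. But your concrete plan has a fatal gap at exactly the minimal case of the proposition. You propose to reduce to the configuration in which $\alpha$ and $\beta$ share no endpoint class. When $|\calS(\Sigma)| = 3$ this configuration does not exist: each grand arc has its two endpoints in two \emph{distinct} elements of $\calS(\Sigma)$, so the endpoint classes of $\alpha$ and of $\beta$ are two $2$-element subsets of a $3$-element set, and by pigeonhole they intersect. No amount of replacing $\alpha$ or $\beta$ by disjoint grand arcs can produce a pair sharing no class, so your reduction cannot even be stated, let alone carried out, when $|\calS(\Sigma)| = 3$---which is the boundary case that the proposition (and the hyperbolicity theorem it feeds) most needs. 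Relatedly, the shared-class situation is not merely an inconvenience for grandness of unicorns: if $\alpha$ and $\beta$ converge to the same actual end, they may intersect infinitely many times, so the finite enumeration of unicorn arcs $c_0, \ldots, c_k$ in your final step is unavailable; your proposal never addresses this.

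The paper's proof handles shared classes head-on instead of avoiding them. If $\alpha$ and $\beta$ share exactly one class (endpoints $e_1, e_2$ and $e_1', e_3$ with $e_1, e_1'$ in the same class $E_1$), it builds an auxiliary arc $\gamma_1$ by following $\alpha$ from $e_2$ to its first intersection point with $\beta$ and then following $\beta$ out to $e_3$; this $\gamma_1$ is grand (endpoints in $E_2$ and $E_3$), disjoint from $\beta$ by construction, and meets $\alpha$ only finitely often because their ends are pairwise distinct, so a finite unicorn path joins $\alpha$ to $\gamma_1$ and a single edge then reaches $\beta$. If $\alpha$ and $\beta$ share both classes, the hypothesis $|\calS(\Sigma)| \ge 3$ furnishes a third class $E_3$, and a similar fellow-traveling construction produces a grand arc $\gamma$ disjoint from $\alpha$ with one endpoint in $E_3$, reducing to the one-shared-class case. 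To repair your argument, replace your reduction target with this case analysis; the rest of your outline (minimal position, disjointness of consecutive unicorns as in \cite{HPW}, and the grandness bookkeeping) then goes through essentially as you describe.
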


To prove this, we will use \textit{unicorn paths} as introduced by Hensel--Przytycki--Webb \cite{HPW}.
Unicorn paths were originally introduced for arcs which have a finite number of intersections
and were extended to the setting of infinitely many intersections by Fanoni--Ghaswala--McLeay \cite{FGM}.

A \textit{unicorn} in $\alpha$ and $\beta$ is an arc of the form $a \cup b$, where $a$ is the closure of a connected component
of $\alpha \setminus p$, $b$ is a connected component of $\beta \setminus p$, and $p$ is an intersection point of $\alpha$ and $\beta$. We call the
point $p$ the \textit{corner} of the unicorn.
Note that for $a \cup b$ to be an arc, $a$ and $b$ can intersect only at $p$. In general, given two subarcs of $\alpha$
and $\beta$ with a common endpoint in $\Sigma$, there can be other intersection points, in which case the subarcs do not define a unicorn.

\begin{proof}
By Lemma \ref{Lem:GrandArcInfiniteDiameter}, we have that $\calG(\Sigma)$ has infinite diameter when $\abs{\calS(\Sigma)} \geq 2$.  Therefore, we need only show that $\calG(\Sigma)$ is a connected metric space. To do this, we show that between any two grand arcs $\alpha$ and $\beta$, there exists a path in $\calG(\Sigma)$ between $\alpha$ and $\beta$.

Let $\alpha$ and $\beta$ be grand arcs.
There are three main cases to consider: 
\begin{itemize}
\item[(1)] The endpoints of $\alpha$ and $\beta$ all converge to different elements of the grand splitting.
\item[(2)] One endpoint of $\alpha$ converges to the same element of the grand splitting as one endpoint of $\beta$. 
\item[(3)] The endpoints of $\alpha$ and $\beta$ 
converge to the same elements of the grand splitting.
\end{itemize}

In the first case, if the endpoints of $\alpha$ 
and of $\beta$ are disjoint, then $\alpha$ 
and $\beta$ must intersect finitely many times.
This implies that there exists a finite unicorn path between 
them. Note we can choose this unicorn path to be a path of grand arcs.

In the second case, let $e_1,e_2$ be the 
endpoints of $\alpha$, and let $e_1',e_3$ 
be the endpoints of $\beta$. By assumption, there are 
three elements of the grand splitting, $E_1,E_2,E_3$ 
such that $e_1$ and $e_1'$ converge to $E_1$, $e_2$ converges to $E_2$, and $e_3$ converges to $E_3$.
Let $\gamma_1$ be the arc obtained by 
fellow-travelling along $\alpha$ from $e_2$ 
to its first intersection point $p$ with $\beta$, 
then fellow-travelling along $\beta$ from $p$ converging to $e_3$. Note that $\gamma_1$ is disjoint from
$\beta$ by construction. Moreover,
$\gamma_1$ is a grand arc intersecting $\alpha$ 
finitely many times. We orient $\alpha$ 
to start at $e_1$ and end at $e_2$, and orient 
$\gamma_1$ to start at $e_3$ and end at $e_2$. Note 
that since $|\gamma_1\cap \alpha| < \infty$, 
there exists a unicorn path between these two oriented 
arcs, which, together with $\gamma_1$, gives a 
path from $\alpha$ to $\beta$ in $\calG(\Sigma)$.

In the third case, let $e_1$ and $e_2$ be the 
endpoints of $\alpha$ and let $e_1'$ and 
$e_2'$ be the endpoints of $\beta$.
By assumption, we know that there are two elements of the grand splitting, $E_1, E_2 \in \calS(\Sigma)$, such that $e_1$ and $e_1'$ converge to $E_1$ and $e_2$ and $e_2'$ converge to $E_2$.
We will show that the third case reduces to the second case.
Indeed, notice that only two of the at least three equivalence classes of ends have $\alpha$ and $\beta$ converging towards them.
Choose one of the remaining equivalence classes of ends, $E_3$, and choose an arc $\gamma '$ which converges to the ends $e_1 \in E_1$ and $e_3 \in E_3$. 
If 
$\gamma'$ is disjoint from $\alpha$, then we are done.
If $\gamma'$ is not disjoint from $\alpha$, we set 
$\gamma$ to be the unicorn obtained from 
$\alpha$ and $\gamma'$ by fellow-travelling along 
$\gamma$ until the first point of intersection $p$
with $\alpha$, and then fellow-travelling from along $\alpha$ from $p$ and converging 
towards $e_1$.
By construction, $\gamma$ and $\alpha$ are disjoint grand arcs so that only one end point of $\alpha$ and one endpoint of $\beta$ converge towards the same element of the grand splitting, which is precisely Case (2) and we have proven our claim.
\end{proof}

\section{Hyperbolicity of the grand arc graph}\label{Sec:Hyperbolicity}

In this section, we provide a full characterization of when the grand arc graph is or is not hyperbolic. This, together with Proposition ~\ref{prop:InfDiam} will prove Theorem ~\ref{thm:InfDiamHyperbolic}.

\subsection{Grand arc graphs which are not hyperbolic}

In this subsection, we prove that if a grand arc graph of a surface $\Sigma$ admits two disjoint witnesses, then the grand arc graph is not $\delta$-hyperbolic.
To prove this result, we show that there exists a quasi-isometric embedding of $\mathbb{Z}^2$ into the grand arc graph for such a surface.
This is an analogue of Schleimer's disjoint witnesses principle, which holds for surfaces of finite-type \cite{S}. A shorter proof of this proposition can be found at the end of Subsection 6.2.

\begin{proposition}\label{prop:TwoWitnessesNotHyperbolic}
Suppose that $|\calS(\Sigma)| < \infty$.
If $\calG(\Sigma)$ is nonempty, and admits two disjoint witnesses $W_1,$ and $W_2$ with complexity $\chi(W_i)\le -2$, then $\calG(\Sigma)$ is not $\delta$-hyperbolic.
\end{proposition}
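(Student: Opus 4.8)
The plan is to exhibit a quasi-isometric embedding $\Phi \colon (\ZZ^2, \norm{\param}_1) \to \calG(\Sigma)$. Since a $\delta$-hyperbolic space admits no quasi-isometrically embedded copy of $\ZZ^2$ (the image of a large geodesic triangle in $\ZZ^2$ has a center that is $\sim R$ far from all three sides, so it cannot be $\delta$-thin once $R \gg \delta$), producing such an embedding shows $\calG(\Sigma)$ is not hyperbolic. First I would record that, by \lemref{Lem:TwoEquivalenceClasses}, the hypotheses force $|\calS(\Sigma)| = 2$, so every grand arc runs between the two classes $E_1, E_2$. Fix a base grand arc $\alpha_0$ (which exists since $\calG(\Sigma) \neq \emptyset$) with endpoints $e_1 \in E_1$ and $e_2 \in E_2$; because $W_1$ and $W_2$ are witnesses, $\alpha_0$ meets both. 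Since each $W_i$ is a finite-type surface with $\chi(W_i) \le -2$, it carries a pseudo-Anosov $\vphi_i$; extend $\vphi_i$ by the identity on $W_i^c$ to a mapping class $\phi_i \in \Map(\Sigma)$. As $W_1$ and $W_2$ are disjoint, $\phi_1$ and $\phi_2$ commute, and each fixes every end of $\Sigma$. I set $\Phi(m,n) = \phi_1^m \phi_2^n \alpha_0$, a grand arc with the same endpoints $e_1, e_2$ as $\alpha_0$.

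For the lower bound I would use subsurface projections via \lemref{Lem:SubsurfaceProjections}. Because $\phi_2$ is supported in $W_2$, which is disjoint from $W_1$, it does not affect the $W_1$-projection, so $\pi_{W_1}(\Phi(m,n)) = \phi_1^m\,\pi_{W_1}(\alpha_0)$. Writing $a = \pi_{W_1}(\alpha_0)$ and using that a pseudo-Anosov acts on the arc graph $\calA(W_1)$ with positive translation length, I get $d_{\calA(W_1)}(\pi_{W_1}\Phi(m,n), \pi_{W_1}\Phi(m',n')) = d_{\calA(W_1)}(\phi_1^m a, \phi_1^{m'} a) \ge \lambda_1 |m - m'| - c_1$. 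By \lemref{Lem:SubsurfaceProjections} the left-hand side is at most $d_{\calG(\Sigma)}(\Phi(m,n), \Phi(m',n'))$, and the symmetric computation with $W_2$ controls $|n - n'|$. Taking the larger of the two bounds gives a lower bound comparable to $\norm{(m,n) - (m',n')}_1$.

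The crux is the upper bound, for which it suffices to bound $d_{\calG}(\alpha_0, \phi_i \alpha_0)$ by a constant $D_i$ and then chain, using that $\phi_1, \phi_2$ act by isometries so that $d_\calG(\Phi(m,n), \Phi(m',n')) \le D_1|m-m'| + D_2|n-n'|$. The naive attempt---to connect $\alpha_0$ to $\phi_i\alpha_0$ by a single grand arc disjoint from both, chosen to avoid $W_i$ entirely---fails precisely because $W_i$ is a witness, so \emph{no} grand arc avoids it. Instead I would exploit that $\phi_i\alpha_0$ differs from $\alpha_0$ only inside $W_i$: after putting them in minimal position, $|\alpha_0 \cap \phi_i\alpha_0| < \infty$, and the two arcs share the endpoints $e_1 \in E_1$ and $e_2 \in E_2$. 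I then run a \emph{unicorn path} between them, exactly as in Case~(1) of \propref{prop:InfDiam}. Here every unicorn formed from a subarc of $\alpha_0$ limiting to $e_1$ and a subarc of $\phi_i\alpha_0$ limiting to $e_2$ is again a grand arc, since its two ends lie in the distinct classes $E_1, E_2$, and consecutive unicorns are disjoint. This produces a path in $\calG(\Sigma)$ of length at most $|\alpha_0 \cap \phi_i\alpha_0| + 2 =: D_i < \infty$.

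Combining the two estimates shows $\Phi$ is a quasi-isometric embedding, and hence $\calG(\Sigma)$ is not $\delta$-hyperbolic. I expect the unicorn step to be the delicate point: one must check that all the unicorns remain grand, which is exactly where the two endpoints lying in \emph{different} classes is essential, and one must confirm that the argument does not secretly require a third class $E_3$. It avoids that difficulty---unlike the general Case~(3) of \propref{prop:InfDiam}---precisely because $\alpha_0$ and $\phi_i\alpha_0$ share both endpoints and meet only finitely often, so the finite unicorn path applies directly.
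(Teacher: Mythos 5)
Your proof is correct, and it shares the paper's skeleton---commuting pseudo-Anosovs $\phi_1,\phi_2$ supported on the two disjoint witnesses, the orbit $\{\phi_1^m\phi_2^n\alpha_0\}$ as a candidate quasi-flat, and the lower bound via \lemref{Lem:SubsurfaceProjections} plus positive translation length of a pseudo-Anosov on the arc graph---but your upper bound goes by a genuinely different route. The paper does not use unicorn paths here: it chooses a special base arc $\eta$ meeting each $W_i$ in a single arc with prescribed endpoints on $\del W_i$, introduces the prescribed-boundary arc complex $\hat\calA(W_i)$, and produces paths in $\calG(\Sigma)$ by \emph{lifting} paths $\Gamma_i\subset\hat\calA(W_i)$ (a pair $(\nu_1,\nu_2)$ lifts to the grand arc $\nu_1\cup\nu_2\cup(\eta\cap(W_1\cup W_2)^c)$), concatenating a lifted $\Gamma_1$-segment with a lifted $\Gamma_2$-segment. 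You instead bound the single displacement $d_{\calG(\Sigma)}(\alpha_0,\phi_i\alpha_0)$ by a finite unicorn path---legitimate because $\phi_i$ fixes all ends, so $\alpha_0$ and $\phi_i\alpha_0$ share both endpoints; they coincide outside the finite-type subsurface $W_i$, hence after perturbation meet only finitely often; and every unicorn inherits the two ends $e_1\in E_1$, $e_2\in E_2$ of the original arcs, so it is grand---and then you chain using the isometric action. Your route is shorter, works for an arbitrary base grand arc, avoids the auxiliary complex $\hat\calA(W_i)$ entirely, and (like the paper's construction) never needs global connectivity of $\calG(\Sigma)$, which is only proved for $|\calS(\Sigma)|\ge 3$; the paper's route pays the overhead of the lift construction but in exchange exhibits the explicit quasi-geodesic family with constants tied to translation lengths in $\hat\calA(W_i)$, which is what its remark on embedding $\ZZ^k$ and $\ZZ^{\NN}$ leans on. Your argument is in fact closer in spirit to the paper's second, much shorter proof (given after Theorem~\ref{thm:lox-action}), which derives the contradiction from two commuting loxodromic isometries. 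Two small points to make explicit if you write this up: since $\alpha_0$ and $\phi_i\alpha_0$ literally coincide outside $W_i$, you must push off to transverse position before counting intersections (finiteness inside $W_i$ follows from the paper's lemma that a grand arc meets a finite-type subsurface, in minimal position, in finitely many components); and since $\pi_{W_1}(\alpha_0)$ is a multi-arc and the paper's projection distance is an infimum over components, the constant $c_1$ in your translation-length estimate should be taken as a maximum over the finitely many pairs of components.
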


\begin{proof}
Since $W_1$ and $W_2$ are both witnesses, they must each 
have at least two boundary components. 
Let $\eta$ be a grand arc which intersects exactly two of the boundary components of $\del W_1$ exactly once, 
and exactly two of the boundary components of $\del W_2$ exactly once. Such an arc exists by taking simple arcs in $W_i$ with appropriate boundaries, connecting them in the finite-type connected component of ${\Sigma \ssm (W_1\cup W_2)}$, and extending the resulting arc segment to a grand arc.

The \textit{prescribed-boundary arc complex}, denoted by $\hat\calA(W_i)$, is defined to be the simplicial complex where the vertices are arcs 
in $W_i$ whose endpoints are $\eta \cap W_i$, and edges represent disjointness.
Using unicorn paths as in \cite{HPW}, it is evident that for any witness $W_i$, $\hat\calA(W_i)$ is connected. 
Since $d_{\hat\calA(W_i)} \ge d_{\calA(W_i)}$, it follows that it is of infinite diameter. 
Moreover, the pure mapping class group of $W_i$ acts by isometries on the metric space $\hat\calA(W_i)$. 

Let $\vphi_1$ and $\vphi_2$ be pseudo-Anosov 
mapping classes on $W_1$ and $W_2$, which 
exist since $\chi(W_i) \le -2$. 
Up to taking 
powers, we can assume that $\vphi_i$ fixes 
$\del W_i$ pointwise, and so, abusing notation, we 
can extend them continuously to mapping 
classes $\vphi_1$ and $\vphi_2$ in $\Map(\Sigma)$. 
Since the $\vphi_i$ are pseudo-Anosov, $d_{\hat \calA(W_i)}(\vphi_i^n\eta_i,\eta_i)$ 
tends to infinity as $n$ gets large \cite{MM1} \cite{MS}. 
Let $\eta^{m_1,m_2} = \vphi_1^{m_1}\vphi_2^{m_2}\eta$.

For any fixed $m_2$, we claim that the arcs 
$\{\pi_{W_1}(\eta^{j,m_2})\}_{j\in \ZZ}$ define a bi-infinite 
$(C_1,D_1)$-quasi-geodesic in $\hat\calA(W_1)$, where 
$C_1$ only depends on $W_1$ and $\vphi_1$.
Indeed, let $\eta_1 = \pi_{W_1}(\eta)$, and note that $\pi_{W_1}(\eta^{j, m_2}) = \vphi_1^{j}\circ \eta_1$. 
The claim follows since pseudo-Anosov 
maps act coarsely loxodromically on $\hat\calA(W_1)$
\cite{MS} \cite{MM1}.
Similarly, for any fixed $m_1$, the arcs $\{ \pi_{W_2}(\eta^{m_1,k})\}_{k \in \ZZ}$ define a bi-infnite $(C_2, D_2)$-quasi-geodesic in $\hat\calA(W_2)$, where 
$C_2$ only depends on $W_2$ and $\vphi_2$.
Using these bi-infinite quasi-geodesics and the same argument as in the proof of Lemma ~\ref{Lem:SubsurfaceProjections}, it follows that:
\begin{align*}
d_{\calG(\Sigma)}(\eta^{m_1,m_2}, \eta) 
&\geq \max \{d_{\hat\calA(W_1)}(\pi_{W_1}(\eta^{m_1,m_2}), \pi_{W_1}(\eta)), d_{\hat\calA(W_2)}(\pi_{W_2}(\eta^{m_1,m_2}), \pi_{W_2}(\eta))\}\\
&\geq C \max(|m_1|, |m_2|) - D
\end{align*}
where $C = \max\{C_1, C_2\}$ is the maximum of the two translation lengths of $\vphi_1$ 
or $\vphi_2$ acting on their respective arc complexes,  $\hat\calA(W_i)$, and $D = \max\{D_1,D_2\}$.

We will now translate these $(C_i,D_i)$-quasi-geodesics
in $\hat\calA(W_i)$ into a geodesic in $\calG(\Sigma)$.
We begin by choosing paths in $\hat\calA(W_i)$ 
between $\eta_i$ and $\vphi_i\eta_i$ of length $C_i$, 
and apply $\vphi_i$ to them to get a bi-infinite 
quasi-geodesic, $\Gamma_i$ in $\hat\calA(W_i)$.

Given any pair of arcs $\nu_1 \in \Gamma_1$ 
and $\nu_2 \in \Gamma_2$, we can construct a grand 
arc by taking 
$\nu_1 \cup \nu_2 \cup (\eta \cap (W_1\cup W_2)^c$. We 
call this constructed arc the \textit{lift} of the pair 
$(\nu_1,\nu_2)$. 

Let $F$ be the family of grand arcs obtained by 
lifting the family $\{(\nu_1,\nu_2): \nu_i\in \Gamma_i\}$. 
This family contains a path 
of length at most $C(|m_1| + |m_2|)+D$ between 
$\eta$ and $\eta^{m_1,m_2}$. 
Indeed, we can describe this path as follows.
Let $\Gamma_i(j)$ denote the $j^{\text{th}}$ point in the path $\Gamma_i$.
From above, we know that $\Gamma_1$ is a quasi-geodesic path of length at most $C_1 m_1 + D_1$, and $\Gamma_2$ is a quasi-geodesic path of length at most $C_2 m_2 + D_2$.
Therefore, the lifts of the sequence \[
\{(\Gamma_1(i),\Gamma_2(0))\}_{0\le i\le C_1 m_1 + D_1}
\]
is a path of length $C_1 m_1 + D_1$ of grand arcs from $\eta^{0,0}$ to $\eta^{m_1,0}$, and the lifts of the sequence \[
\{(\Gamma_1(C_1 m_1 + D_1),\Gamma_2(j))\}_{0\le j \le C_2 m_2 + D_2}
\] 
is a path of length $C_2 m_2 + D_2$ of grand arcs from $\eta^{m_1,0}$ to $\eta^{m_1,m_2}$. Concatenating the two paths gives us a path of length at most $C(|m_1| + |m_2|)+D$ between 
$\eta$ and $\eta^{m_1,m_2}$. 

Since $\Map(\Sigma)$ acts by isometries on $\calG(\Sigma)$, 
it follows that for any $m_1,m_2,m_1',m_2'$: 
\begin{align*}
\frac{1}{2C} (|m_1-m_1'| + |m_2-m_2'|)-D
&\le d_{\calG(\Sigma)}(\eta,\eta^{m_1-m_1',m_2-m_2'}) \\
&= d_{\calG(\Sigma)}(\eta^{m_1,m_2},\eta^{m_1',m_2'}) \\
&\le C(|m_1-m_1'| + |m_2-m_2'|)+D
\end{align*}
which implies that $\{\eta^{m_1, m_2}\}_{m_1, m_2\in \ZZ}$ 
forms a quasi-isometrically embedded copy of $\ZZ^2$ in 
$\calG(\Sigma)$. Thus, $\calG(\Sigma)$ 
is not $\delta$-hyperbolic.
\end{proof}

\begin{remark}
The above proposition can be generalized to 
cases when there are $k$ (or infinitely-many) disjoint witnesses of sufficient 
complexity, in which case 
$\calG(\Sigma)$ has a quasi-isometrically 
embedded copy of $\ZZ^k$ (or $\ZZ^{\NN}$). 

A similar result along these lines can be found in \cite{GRV}
where it is shown that for surfaces containing an \textit{essential shift},
one can quasi-isometrically embed an infinite dimensional cube into the mapping class group of the surface.
\end{remark}

\subsection{Grand arc graphs which are hyperbolic}

In this section, we prove when the grand arc graph of a surface $\Sigma$ is $\delta$-hyperbolic.
This section closely mirrors section 
6 of \cite{FGM}, which proves the hyperbolicity of the omnipresent arc graph.
The lemmas and corollaries in this section come from the work of Fanoni--Ghaswala--McLeay on omnipresent arcs, but the statements and proofs also hold in the case of grand arcs.

To begin, we define \textit{grand unicorn paths}, an extension of the definition of unicorn paths which were introduced in Section \ref{Sec:Connectivity}.
A \textit{grand unicorn} is a unicorn that is 
also a grand arc. If $\alpha$ and $\beta$ are grand 
arcs, we let $\calU_G(\alpha,\beta)$ denote
the set of all grand unicorns originating 
from $\alpha$ and $\beta$ and call this a \textit{grand unicorn path}.

The following lemma is the analogue of Lemma 6.5 in the work of Fanoni--Ghaswala--McLeay \cite{FGM}, but is based on Remark 3.2 of Hensel--Przytycki--Webb\cite{HPW}.

\begin{lemma}[Lemma 6.5 of \cite{FGM}]
\label{FGM6.5}
Let $x = a \cup b$ be a grand unicorn in $\calU_{G}(\alpha, \beta)$. Then one of the following holds:
\begin{itemize}
    \item  $\beta \ssm b$ intersects $a$, and then there exists a unicorn $y = a' \cup b' \in \calU_{G}(\alpha, \beta)$ which is disjoint from $x$, such that $a' \subsetneq a$ and $b \subsetneq b'$. In particular, the corner of $y$ belongs to the interior of $a$; or
    \item $\beta \ssm b$ and $a$ are disjoint arc segments. In this case, $x$ is disjoint from $\beta$.
\end{itemize}
\end{lemma}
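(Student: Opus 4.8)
The plan is to fix orientations on $\alpha$ and $\beta$ and to realize them as geodesics in a fixed hyperbolic metric, so that $\alpha \cap \beta$ is a discrete set and the two arcs are in minimal position. I write the corner as $p$ and adopt the convention that $a$ is the subarc of $\alpha$ from its starting end to $p$ and $b$ the subarc of $\beta$ from $p$ to its terminal end, so that $\beta \ssm b$ is the subarc of $\beta$ from its starting end up to (but not including) $p$. Since $x = a \cup b$ is a grand unicorn, its two endpoints are maximal ends lying in distinct elements of $\calS(\Sigma)$, and I will use repeatedly that any surgered arc sharing exactly these two endpoints is automatically grand. The stated alternative is then just the dichotomy according to whether $(\beta \ssm b) \cap a$ is empty or not.

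In the disjoint case, suppose $(\beta \ssm b) \cap a = \emptyset$. Then $a \cap \beta = (a \cap b) \cup (a \cap (\beta \ssm b)) = \{p\}$, so $x = a \cup b$ meets $\beta$ only along the shared subarc $b$ together with the point $p$. A small isotopy pushing $a$ off $\beta$ near $p$ (possible because $a$ touches $\beta$ only at the single endpoint $p$) then realizes $x$ disjointly from $\beta$, which is the second conclusion.

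In the intersecting case, $(\beta \ssm b) \cap a \neq \emptyset$. I would let $p'$ be the point of $(\beta \ssm b) \cap a$ closest to $p$ along $\beta$; this point exists because, with the arcs realized as geodesics, the intersection points are discrete near the interior point $p$. Set $a'$ to be the subarc of $\alpha$ from its starting end to $p'$ and $b'$ the subarc of $\beta$ from $p'$ to its terminal end. Since $p' \in \beta \ssm b$ lies strictly before $p$ on $\beta$, we get $b \subsetneq b'$, and since $p'$ lies in the interior of $a$ we get $a' \subsetneq a$ with corner $p'$ in the interior of $a$, as required. To see that $y = a' \cup b'$ is a genuine unicorn I would check $a' \cap b' = \{p'\}$: writing $b' = [p',p]_\beta \cup b$, the piece $a' \subset a$ misses $b$ (as $a \cap b = \{p\}$ and $p \notin a'$), while $a'$ meets $[p',p]_\beta$ only at $p'$ by the choice of $p'$ as the closest intersection, so that the interior of $[p',p]_\beta$ is free of $a$. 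Because $y$ has the same two endpoints as $x$, it is a grand arc, hence lies in $\calU_G(\alpha,\beta)$.

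The remaining and most delicate point is the asserted disjointness of $x$ and $y$, which must be read as the existence of disjoint representatives, since $x$ and $y$ literally share the subarcs $a'$ and $b$. For this I would follow Remark 3.2 of \cite{HPW} (and Lemma 6.5 of \cite{FGM}): the two arcs coincide along $a'$ and along $b$ and differ only on the two segments $[p',p]_\alpha \subset \alpha$ and $[p',p]_\beta \subset \beta$, both joining $p'$ to $p$; since $p'$ was chosen innermost, these segments bound a switch region across which $x$ can be pushed off $y$ by an isotopy supported near the shared subarcs, creating no new intersections. The one subtlety particular to the infinite-type setting is the well-definedness of the ``closest intersection point'' $p'$, since a priori the points of $(\beta \ssm b) \cap a$ could accumulate at $p$; this is exactly the obstacle handled by the FGM extension of unicorn paths to infinitely many intersections, and I would resolve it by working with geodesic representatives, for which $\alpha \cap \beta$ is discrete away from the ends and the infimum defining $p'$ is attained. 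I expect this attainment step, together with the pushoff, to be the crux, the nesting and unicorn verifications then being routine.
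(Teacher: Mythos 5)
Your proof is correct and coincides with the argument this paper actually relies on: the paper states Lemma \ref{FGM6.5} without proof, importing it from Lemma 6.5 of \cite{FGM} (itself based on Remark 3.2 of \cite{HPW}), and that argument is precisely your dichotomy, the surgery at the point of $(\beta \ssm b) \cap a$ innermost along $\beta$, and the pushoff of the shared subarcs $a'$ and $b$ (which only requires, as you establish, that the two middle segments $[p',p]_\alpha$ and $[p',p]_\beta$ meet solely at their common endpoints). Your handling of the two points specific to this setting --- that geodesic representatives make the innermost intersection point well-defined despite possibly infinitely many intersections, and that the surgered arc has the same endpoints as $x$ and is therefore again grand --- is exactly how \cite{FGM} adapts the finite-type argument of \cite{HPW}, so nothing is missing.
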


A consequence of Lemma \ref{FGM6.5} is the following corollary, which is the analogue of Corollary 6.6 of \cite{FGM}.
This corollary shows that if two grand arcs intersect a finite number of times, then the subgraph $\calU_{G}(\alpha, \beta)$ of $\calG(\Sigma)$ is connected.

\begin{corollary}[Corollary 6.6 of \cite{FGM}]\label{FGM6.6}
Suppose that $x = a \cup b \in \calU_{G}(\alpha, \beta)$ and $\beta\ssm b$ has finite intersection with $a$,
Then there exists a unicorn path between
$x$ and $\beta$ in $\calU_{G}(\alpha, \beta)$.
\end{corollary}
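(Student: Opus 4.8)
The plan is to argue by induction on the number $n = \abs{(\beta \ssm b) \cap a}$ of intersection points of $\beta \ssm b$ with $a$, which is finite by hypothesis. The two alternatives of Lemma~\ref{FGM6.5} will furnish, respectively, the base case and the inductive step, so the whole proof amounts to organizing these two alternatives into an induction and checking that the relevant quantity decreases at each step.

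First I would treat the base case $n = 0$. Here $\beta \ssm b$ and $a$ are disjoint, so the second alternative of Lemma~\ref{FGM6.5} applies and $x$ is disjoint from $\beta$. Consequently $x$ and $\beta$ are adjacent in $\calG(\Sigma)$, giving a grand unicorn path of length at most one between $x$ and $\beta$, which is what we want.

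Next, for the inductive step $n \ge 1$, I would invoke the first alternative of Lemma~\ref{FGM6.5} to obtain a grand unicorn $y = a' \cup b' \in \calU_G(\alpha,\beta)$ disjoint from $x$, with $a' \subsetneq a$, $b \subsetneq b'$, and corner $p'$ in the interior of $a$. The crucial computation is that $\abs{(\beta \ssm b') \cap a'} < n$. Since $b \subsetneq b'$ we have $\beta \ssm b' \subseteq \beta \ssm b$, and since $a' \subsetneq a$ we obtain the inclusion $(\beta \ssm b') \cap a' \subseteq (\beta \ssm b) \cap a$; moreover the corner $p'$, which lies in $(\beta \ssm b) \cap a$, becomes the shared endpoint of $a'$ and $b'$ and so is no longer an interior crossing of $\beta \ssm b'$ with $a'$. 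Hence the count drops by at least one (and in particular stays finite), so the inductive hypothesis applies to $y$ and yields a grand unicorn path from $y$ to $\beta$. Prepending the edge from $x$ to $y$, which exists because $x$ and $y$ are disjoint grand arcs, completes the path from $x$ to $\beta$.

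The one step I expect to require genuine care is this strict decrease of the intersection count: specifically the bookkeeping that the old corner $p'$ is \emph{consumed}, changing from an interior crossing to an endpoint rather than persisting, together with the verification that no new intersections of $\beta \ssm b'$ with $a'$ are created beyond those already present in $(\beta \ssm b) \cap a$. Everything else is a direct application of Lemma~\ref{FGM6.5} and the elementary fact that disjoint grand arcs are joined by an edge of $\calG(\Sigma)$.
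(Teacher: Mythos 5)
Your proposal is correct: inducting on $\abs{(\beta \ssm b) \cap a}$ and using the dichotomy of Lemma~\ref{FGM6.5} (disjointness giving the base case, the nested unicorn $y = a' \cup b'$ with the consumed corner giving the strict decrease) is precisely the argument behind Corollary 6.6 of \cite{FGM}, which the paper cites rather than reproves. No gaps; the bookkeeping you flag --- that the corner becomes an endpoint of $y$ and that $(\beta \ssm b') \cap a' \subseteq (\beta \ssm b) \cap a$ --- is exactly the content of the inclusion $a' \subsetneq a$, $b \subsetneq b'$ supplied by the lemma.
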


In the above lemma, we only considered the case where grand arcs intersect a finite number of times.
Therefore, it does not follow that $\calU_{G}(\alpha, \beta)$ is a connected subgraph of $\calG(\Sigma)$ when $\alpha$ and $\beta$ intersect an infinite number of times.
The next lemma shows that so long as $\alpha$ and $\beta$ intersect, then $\calU_{G}(\alpha, \beta)$ must contain a grand arc which is neither $\alpha$ nor $\beta$.
While this lemma is stated in Fanoni--Ghaswala--McLeay in the case of omnipresent arcs, it extends to the case of grand arcs using the same reasoning.

\begin{lemma}[Lemma 6.7 of \cite{FGM}] \label{lem:exists_grand_unicorn}
Let $\alpha, \beta$ be two intersecting grand arcs. Then $\calU_G(\alpha, \beta)$ contains an arc $x$ which is not equal to $\alpha$ or $\beta$
\end{lemma}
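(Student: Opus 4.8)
The plan is to produce a grand unicorn by choosing a single, extremally-placed intersection point. First I would fix a complete hyperbolic metric on $\Sigma$ and realize $\alpha$ and $\beta$ as bi-infinite geodesics in minimal position, parametrized as $\alpha,\beta\colon\RR\to\Sigma$. Since $\alpha$ is a grand arc, its two ends $\alpha(-\infty)$ and $\alpha(+\infty)$ lie in different elements of $\calS(\Sigma)$, and likewise for $\beta$; so I may orient the two arcs so that $\alpha(-\infty)$ and $\beta(+\infty)$ lie in different elements of the grand splitting. (Fixing the end $\alpha(-\infty)$, at most one end of $\beta$ lies in the same element of $\calS(\Sigma)$, so $\beta(+\infty)$ can be taken to be an end of $\beta$ in a different element; note this forces $\alpha(-\infty)\ne\beta(+\infty)$ as ends.) To each intersection point $p=\alpha(s)=\beta(t)$ I associate the candidate unicorn $a\cup b$, where $a=\alpha((-\infty,s])$ is the ray of $\alpha$ to $\alpha(-\infty)$ and $b=\beta([t,\infty))$ is the ray of $\beta$ to $\beta(+\infty)$; this is a grand unicorn exactly when $a\cap b=\{p\}$, since it then converges to two ends in different elements of $\calS(\Sigma)$.

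Next I would run an extremal selection on the intersection set. Let $P=\{(s,t)\in\RR^2 : \alpha(s)=\beta(t)\}$, which is nonempty because $\alpha$ and $\beta$ intersect, and order $P$ by declaring $(s,t)\preceq(s',t')$ when $s\le s'$ and $t\ge t'$, so that $(s,t)\preceq(s',t')$ exactly when the candidate ray-pair for $(s,t)$ is nested inside that for $(s',t')$. The crucial claim is that every down-set in $P$ is finite: for a fixed $(s_0,t_0)\in P$, the down-set $\{(s,t)\in P : s\le s_0,\ t\ge t_0\}$ consists precisely of the intersections of the geodesic ray $\alpha((-\infty,s_0])$, which converges to $\alpha(-\infty)$, with the geodesic ray $\beta([t_0,\infty))$, which converges to $\beta(+\infty)$. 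Because $\alpha(-\infty)\ne\beta(+\infty)$, these two ends can be separated by a simple closed curve via \lemref{Lemma:SCCSeparatesEnds}, so the two rays have tails lying in disjoint end-neighbourhoods and hence disjoint; the complementary portions are compact geodesic segments, which meet only finitely often. Thus each down-set is finite, so picking any $p_0\in P$ and a $\preceq$-minimal element of its (finite, nonempty) down-set yields an element $p^*=(s^*,t^*)$ that is $\preceq$-minimal in all of $P$.

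Finally I would read off the arc. Setting $a=\alpha((-\infty,s^*])$ and $b=\beta([t^*,\infty))$, any point of $a\cap b$ is an intersection point $\alpha(s)=\beta(t)$ with $s\le s^*$ and $t\ge t^*$, i.e. an element of $P$ that is $\preceq p^*$; minimality of $p^*$ forces this point to be $p^*$, so $a\cap b=\{p^*\}$ and $x=a\cup b$ is a simple arc. By construction $x$ converges to $\alpha(-\infty)$ and $\beta(+\infty)$, which lie in different elements of $\calS(\Sigma)$, so $x\in\calU_G(\alpha,\beta)$. Since $p^*$ is an interior intersection point, $a$ and $b$ are nondegenerate; as $\alpha$ and $\beta$ are distinct geodesics they share no nondegenerate subarc, so $x$ contains the nondegenerate subarc $b\subset\beta$ (not contained in $\alpha$) and $a\subset\alpha$ (not contained in $\beta$), and is therefore isotopic to neither $\alpha$ nor $\beta$, as required.

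I expect the main obstacle to be the finiteness of down-sets in the second paragraph — equivalently, the geometric fact that two geodesic rays converging to distinct ends intersect only finitely often. This is exactly the step that rules out the pathology of infinitely many intersection points accumulating at a shared end, and it is where the ``grand'' hypothesis (that the two relevant ends lie in different elements of $\calS(\Sigma)$, hence are distinct) is essential. The remaining work — the orientation bookkeeping and the poset argument for a minimal element — is elementary and parallels the reasoning of Hensel--Przytycki--Webb \cite{HPW} and Fanoni--Ghaswala--McLeay \cite{FGM}.
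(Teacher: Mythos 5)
Your argument is correct, and it is essentially the proof the paper points to rather than a genuinely new route: the paper gives no proof of this lemma, citing Lemma 6.7 of \cite{FGM} with the remark that the omnipresent-arc reasoning ``extends to the case of grand arcs using the same reasoning,'' and your write-up is exactly that reasoning carried out for grand arcs --- realize $\alpha,\beta$ in minimal position, choose one end of each lying in different elements of $\calS(\Sigma)$ (possible because each grand arc already joins two different elements), use the separation of distinct ends as in Lemma~\ref{Lemma:SCCSeparatesEnds} to see that the two corresponding rays meet only finitely often, and take an extremal intersection point. Your poset formalism is a cosmetic variant of the extremal-intersection-point choice in \cite{FGM, HPW}; the main value of your proof is that it fills in the citation.

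Two details deserve patching. First, in the finiteness step you dispose of tail--tail intersections (empty, by the separating curve) and compact--compact intersections (finite), but not of the cross terms: the tail of $\alpha((-\infty,s_0])$ could a priori meet the compact portion of $\beta([t_0,\infty))$, and vice versa. The repair is one line: since the tails lie in disjoint complementary components, every intersection of the two rays lies in the union of the two compact portions, which is compact, and two distinct geodesics intersect in a discrete set, so the down-set is finite. Second, your closing claim that $x$ is \emph{isotopic} to neither $\alpha$ nor $\beta$ is more than your subarc argument establishes (it shows inequality as arcs, and note $\beta(+\infty)$ may coincide with $\alpha(+\infty)$, so an isotopy is not obviously excluded); fortunately inequality as arcs is all the statement, and its use in the proof of Lemma~\ref{Lemma:FGM6.8}, requires. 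Finally, realizing grand arcs as bi-infinite geodesics converging to the prescribed ends is a property of a suitably chosen hyperbolic metric rather than an arbitrary complete one; this is a standing convention of the paper (used, e.g., in the proof of Lemma~\ref{Lem:GrandArcInfiniteDiameter}) and of \cite{FGM}, so you may simply invoke it, but it should be stated as a choice.
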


As noted earlier, $\calU_G(\alpha, \beta)$ may not be a connected subgraph of $\calG(\Sigma)$.
However, the following lemma shows that the one-neighborhood of $\calU_G(\alpha, \beta)$ is a connected subgraph of $\calG(\Sigma)$.

\begin{lemma}\label{Lemma:FGM6.8}
Suppose $|\calS(\Sigma)| \geq 3$. For any $\alpha, \beta \in \calG(\Sigma)$, the full subgraph of $\calG(\Sigma)$ spanned by $N_1(\calU_G(\alpha, \beta))$
is connected.
\end{lemma}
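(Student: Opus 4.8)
The plan is to mirror the proof of the corresponding statement (Lemma 6.8) in \cite{FGM}, using the grand unicorn machinery of Lemma~\ref{FGM6.5} and Corollary~\ref{FGM6.6}, and to invoke the hypothesis $|\calS(\Sigma)|\ge 3$ to bridge the accumulation of intersection points. First I would make the standard reduction: every vertex $v\in N_1(\calU_G(\alpha,\beta))$ is either a grand unicorn or is joined by an edge of $\calG(\Sigma)$ to some grand unicorn $u$, and that edge lies in the full subgraph on $N_1(\calU_G(\alpha,\beta))$; hence it suffices to prove that the grand unicorns themselves all lie in a single connected component of this subgraph. Treating $\alpha$ and $\beta$ as the extreme members of $\calU_G(\alpha,\beta)$, I would reduce further to showing that every grand unicorn $x$ can be joined to $\beta$ by a path inside the subgraph.

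Fix a grand unicorn $x=a\cup b\in\calU_G(\alpha,\beta)$. If $\beta\ssm b$ meets $a$ in only finitely many points, then Corollary~\ref{FGM6.6} produces an honest unicorn path from $x$ to $\beta$ inside $\calU_G(\alpha,\beta)$, and we are done. Otherwise I would iterate Lemma~\ref{FGM6.5}: each application replaces the current grand unicorn $x_i=a_i\cup b_i$ (with $\beta\ssm b_i$ still meeting $a_i$) by a grand unicorn $x_{i+1}=a_{i+1}\cup b_{i+1}$ disjoint from $x_i$, with $a_{i+1}\subsetneq a_i$ and $b_i\subsetneq b_{i+1}$. This yields a ray $x=x_0 - x_1 - x_2 - \cdots$ of edges in $\calU_G(\alpha,\beta)$. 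Either the process terminates at some $x_N$ disjoint from $\beta$, giving a finite path $x_0 - \cdots - x_N - \beta$, or it continues forever, in which case the nested corners $p_i$ of the $x_i$ move monotonically along $\alpha$ and converge to an accumulation point $p_\infty$ of $\alpha\cap\beta$.

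The main obstacle is this infinite case: because the grand unicorn path degenerates at $p_\infty$, no grand unicorn realizes the limit and $\calU_G(\alpha,\beta)$ genuinely has a gap there, so one cannot reach $\beta$ while staying inside $\calU_G(\alpha,\beta)$ alone. Here I would use $|\calS(\Sigma)|\ge 3$, exactly as in the third case of Proposition~\ref{prop:InfDiam}: since $\alpha$ and $\beta$ converge to at most two classes of the grand splitting, I can choose a third class $E_3$ and build a grand arc $w$ running out to $E_3$ from a point near $p_\infty$, arranged so that $w$ is disjoint from $\beta$ and from every $x_i$ with $i$ large. Then $w$ is adjacent to a tail unicorn $x_i$, so $w\in N_1(\calU_G(\alpha,\beta))$, and $w$ is adjacent to $\beta$; concatenating gives a path $x_0 - \cdots - x_i - w - \beta$ in the full subgraph on $N_1(\calU_G(\alpha,\beta))$. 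Producing such a $w$, disjoint from the accumulating tail of the ray yet simultaneously connectable to $\beta$, is the delicate step, and it is precisely what fails when $|\calS(\Sigma)|=2$ (consistent with non-hyperbolicity there).

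Finally, since every grand unicorn is thereby connected to $\beta$ within the subgraph, the reduction of the first paragraph shows that the full subgraph of $\calG(\Sigma)$ spanned by $N_1(\calU_G(\alpha,\beta))$ is connected.
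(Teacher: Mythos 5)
Your skeleton matches the paper's proof: reduce to joining each grand unicorn to $\beta$; settle the finite-intersection case with Corollary~\ref{FGM6.6}; in the infinite case iterate Lemma~\ref{FGM6.5} to get a ray $x_0, x_1, x_2, \dots$ of successively adjacent grand unicorns with shrinking $a_k$ and growing $b_k$; then use a third class $E_3$ of the grand splitting to produce a bridging arc adjacent both to a tail unicorn and to $\beta$. However, the proposal stops exactly where the content of the proof lies: you assert that one can ``build a grand arc $w$ running out to $E_3$ from a point near $p_\infty$, arranged so that $w$ is disjoint from $\beta$ and from every $x_i$ with $i$ large,'' and you yourself flag this as the delicate step without carrying it out. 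Moreover, as stated the sketch is misdirected: a grand arc cannot emanate ``from a point'' --- both of its endpoints must be maximal ends --- and when $\alpha$ and $\beta$ are realized in minimal position the corners $p_i$ do not accumulate at a surface point $p_\infty$; they escape every finite-type subsurface, toward an end.

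The paper fills this gap by fixing a witness $W$ at the outset, which is what makes the limiting behaviour usable. Since $\alpha\cap W$ and $\beta\cap W$ are finite unions of arcs in a finite-type surface, only finitely many corners lie in $W$; because the corners move monotonically along $\alpha$ toward one of its ends and along $\beta$ toward one of its ends, for large $k$ the segment $a_k$ lies entirely outside $W$ and $b_k$ contains all of $\beta\cap W$, so $x_k\cap W=\beta\cap W$. The bridge is then built concretely as a concatenation $\gamma_3 * \gamma_S * b$: an arc $\gamma_3$ converging to an end of $E_3$ and ending on the multicurve $C_3\subset\del W$ that separates $E_3$ from the other ends, an arc $\gamma_S$ crossing $W$ from $C_3$ to a point of $\beta\cap W$, and a terminal subarc $b\subset b_k$ of $\beta$ running out to an end of $\beta$. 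Thus the bridge's second endpoint is an end of $\beta$ itself, not a point near the accumulation, and its disjointness (up to isotopy) from both $\beta$ and $x_k$ comes for free: inside $W$ it avoids $\beta\cap W=x_k\cap W$, outside $W$ it only fellow-travels the shared tail $b$, and the excursions of $\beta$ into the $E_3$-side of $W$ are finitely many compact arcs that $\gamma_3$ can avoid. Supplying this construction (or an equivalent one) is what your proposal still needs; without it, the existence of $w$ is the heart of the lemma rather than a step in its proof.
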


The proof of this lemma is similar to the 
proof of Lemma 6.8 in \cite{FGM}, 
where we replace $P$ with $\calS(\Sigma)$, 
$\calA_2(\Sigma, P)$ with $\calG(\Sigma)$, 
$\calU_2(\alpha,\beta)$ with $\calU_G(\alpha,\beta)$, 
and the surface with a witness.

\begin{proof}
Assume that $|\alpha\cap \beta| > 0$, otherwise the result is clear. Fix a witness $W$.

Let $x_0 = a_0 \cup b_0 \in \calU_G(\alpha,\beta)$ 
be a unicorn arc which is not equal to either $\alpha$ 
or $\beta$, and let $p_0$ be the corner of $x_0$.
Let $b_0'$ be the remaining arc segment of 
$\beta$, such that $\beta = b_0 \cup b_0'$. 
If $|a_0\cap b_0'| < \infty$, then 
there is a finite unicorn path 
from $x_0$ to $\beta$ by Corollary 
~\ref{FGM6.6}. If not, then 
there exists a sequence of unicorn paths, 
$x_k = b_k\cup a_k$ such that 
$b_k \subsetneq b_{k+1}$, and whose 
corners eventually leave $W$. In 
particular, this means that for some $k$, 
$x_k \cap W = \beta \cap W$.

Let $E_1,\hdots, E_n = \calS(\Sigma)$ be the 
grand splitting of $\Sigma$. 
Since $W$ is a witness, for any $i$ there exists 
a finite multicurve, $C_i \subset \del W$ which separates $E_i$ from 
the other ends of $\Sigma$. Without 
loss of generality, let the ends of 
$\beta$ lie in $E_1$ and $E_2$.

Let $\gamma_S$ be a simple arc from a 
boundary component in $C_3$ to a point 
in $S\cap \beta$. Let $\gamma_3$ be an simple 
arc starting at $\gamma_S\cap C_3$ and converging 
to an end in $E_3$, and let $b$ be a 
subarc of $\beta$ starting at $\gamma_3 \cap \beta$ 
and converging to an end, such that $b$ is 
contained in $b_k$.
By construction, $\gamma_3 * \gamma_S * b$ 
is disjoint from $x_k$ and from $\beta$, 
proving the theorem.
\end{proof}

To prove that the grand arc graph is $\delta$-hyperbolic, we use a version of the ``guessing geodesics lemma" by Bowditch \cite[Prop. 3.1]{Bow} and Masur--Schleimer \cite[Theorem 3.15]{MS}.

\begin{lemma}
(Guessing geodesics lemma). Let $G$ be a graph and suppose that for every pair of vertices
$x, y \in G$ there is an associated connected subgraph $g(x, y)$ containing $x$ and $y$. If there exists an $M > 0$ such
that:
\begin{itemize}
\item[(1)] if $d_{G}(x,y) \leq 1$, then $g(x, y)$ has diameter at most $M$, and
\item[(2)] for every $x, y, z \in G$, $g(x, y)$ is contained in the $M$-neighbourhood of $g(x, z) \cup g(y, z)$,
\end{itemize}
then $G$ is $\delta$-hyperbolic, where $\delta$ depends only on $M$.
\end{lemma}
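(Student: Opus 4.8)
The plan is to recognize this as the standard \emph{guessing geodesics} criterion, so the cleanest route is simply to invoke the versions of Bowditch \cite{Bow} and Masur--Schleimer \cite{MS} directly. If one instead wants a self-contained argument, the goal is to verify Gromov's thin-triangles condition for $G$, and the bridge from hypotheses (1)--(2) to thin triangles is to show that each guessed connected subgraph $g(x,y)$ is \emph{uniformly} Hausdorff-close to a genuine geodesic $[x,y]$, with a constant $h = h(M)$ independent of the pair $x,y$. Granting this, hyperbolicity is immediate: for a geodesic triangle with vertices $x,y,z$, Hausdorff-closeness on each end together with the slimness hypothesis (2) in the middle gives $[x,y]\subseteq N_h(g(x,y)) \subseteq N_{h+M}(g(x,z)\cup g(z,y)) \subseteq N_{2h+M}([x,z]\cup[z,y])$, so $G$ has $(2h+M)$-thin triangles and is $\delta$-hyperbolic with $\delta$ depending only on $M$.

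The key steps, in order, would be: (i) replace each connected subgraph $g(x,y)$ by an actual edge-path inside it joining $x$ to $y$, noting that conditions (1) and (2) persist up to adjusting $M$, since such a path lies in $g(x,y)$; (ii) establish the two containments $g(x,y)\subseteq N_h([x,y])$ and $[x,y]\subseteq N_h(g(x,y))$ with a \emph{uniform} $h$; and (iii) run the triangle transfer above. The natural attack on step (ii) is to iterate condition (2) along dyadic subdivisions of the geodesic: choosing a midpoint $m$ of $[x,y]$, condition (2) places every point of $g(x,y)$ within $M$ of $g(x,m)\cup g(m,y)$, and recursing relates a point of $g(x,y)$ to the guessed sets of ever-closer pairs of subdivision vertices, which condition (1) finally pins to vertices lying on $[x,y]$.

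The main obstacle is precisely obtaining a constant in step (ii) that does not depend on $d(x,y)$. The naive dyadic recursion just sketched accumulates one factor of $M$ at each of the $\log_2 d(x,y)$ levels, so it only yields $g(x,y)\subseteq N_{O(M\log d(x,y))}([x,y])$, a neighbourhood whose radius grows with the distance; this is too weak to run the triangle transfer with a \emph{fixed} $\delta$. Upgrading this logarithmic (more generally, any subadditive) estimate to a bound independent of $d(x,y)$ is the genuine content of the criterion: one must feed the weak control back into the slimness condition at an optimally chosen scale, showing that the guessed paths are in fact uniform unparametrised quasigeodesics, and then deduce the thin-triangle (equivalently four-point) inequality directly, after which the uniform Hausdorff closeness follows a posteriori from the Morse lemma. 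Since this bootstrapping is exactly what makes the statement nontrivial, and since it is carried out carefully in \cite{Bow} and \cite{MS}, I would in practice simply cite those references rather than reproduce the argument here.
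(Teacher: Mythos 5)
Your proposal matches the paper exactly: the paper gives no proof of this lemma, simply stating it as a known result of Bowditch \cite[Prop.~3.1]{Bow} and Masur--Schleimer \cite[Theorem 3.15]{MS}, which is precisely the citation route you take. Your supplementary sketch is also sound --- in particular you correctly identify that the real content lies in upgrading the logarithmic bound from dyadic subdivision to a uniform constant --- but since the paper itself defers entirely to the references, citing them is the appropriate and matching approach.
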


Similar to the work of Fanoni--Ghaswala--McLeay \cite{FGM}, we can show that the above lemma applies in our situation by letting $g(\alpha, \beta) = \calN_1( \calU_{G}(\alpha,\beta))$.

\begin{lemma}\label{Lem:DeltaHyp}
Assume $|\calS(\Sigma)| \geq 3$. Then $g(\alpha, \beta) = \calN_1( \calU_{G}(\alpha,\beta))$ yields connected subgraphs such that:
\begin{itemize}
\item[(1)] if $d_{\calG}(\alpha,\beta)\leq 1$, then $g(\alpha, \beta)$ has diameter at most 3, and
\item[(2)] for every $\alpha, \beta, \gamma \in \calG(\Sigma)$, $g(\alpha, \beta)$ is contained in the 3-neighbourhood of $g(\alpha, \gamma) \cup g(\beta, \gamma)$.
\end{itemize}
In particular, there is a uniform constant $\delta$ (independent of $\Sigma$ or $\calS(\Sigma)$) such that if $|\calS(\Sigma)| \geq 3$ then $\calG(\Sigma)$
is $\delta$-hyperbolic.
\end{lemma}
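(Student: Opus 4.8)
The plan is to verify the two hypotheses of the guessing geodesics lemma for the family $g(\alpha,\beta) = \calN_1(\calU_G(\alpha,\beta))$, which is connected by \lemref{Lemma:FGM6.8}. Throughout, I would fix a witness $W$ (which exists by \lemref{lem:witness-exists}) and exploit the lower bound $d_{\calA(W)}(\pi_W(\alpha),\pi_W(\beta)) \le d_{\calG(\Sigma)}(\alpha,\beta)$ from \lemref{Lem:SubsurfaceProjections}. The key conceptual point is that a grand unicorn $x = a\cup b$ built from subarcs of $\alpha$ and $\beta$ projects under $\pi_W$ into a unicorn of the finite-type arc complex $\calA(W)$; this lets me transfer the classical unicorn estimates of Hensel--Przytycki--Webb \cite{HPW}, which already establish both properties in the finite-type setting, back to $\calG(\Sigma)$.

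For property (1), suppose $d_{\calG}(\alpha,\beta)\le 1$, so $\alpha$ and $\beta$ are disjoint (or equal). When they are disjoint, $\calU_G(\alpha,\beta)$ is essentially trivial --- any grand unicorn is disjoint from both $\alpha$ and $\beta$ --- so its one-neighborhood $\calN_1(\calU_G(\alpha,\beta))$ has uniformly bounded diameter. I would check directly that any two vertices in this one-neighborhood are connected through $\alpha$ or $\beta$ in at most three steps, giving the bound $3$ claimed in the statement. This is the routine half and follows the corresponding computation in Section 6 of \cite{FGM}.

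The main work is property (2): the coarse thin-triangle condition asserting $g(\alpha,\beta) \subset \calN_3(g(\alpha,\gamma)\cup g(\beta,\gamma))$. The plan is to take any grand unicorn $x = a\cup b \in \calU_G(\alpha,\beta)$ with corner $p$ and analyze where $p$ sits relative to the unicorns formed with $\gamma$. Following the Hensel--Przytycki--Webb argument, the subarc $a \subset \alpha$ together with an appropriate subarc of $\gamma$ either already forms a grand unicorn in $\calU_G(\alpha,\gamma)$ within bounded distance of $x$, or else the corresponding subarc of $\gamma$ meets $b \subset \beta$, in which case a subarc of $\beta$ and a subarc of $\gamma$ assemble into a grand unicorn in $\calU_G(\beta,\gamma)$ near $x$. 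The crucial technical step --- and the expected obstacle --- is ensuring that these assembled arcs are themselves \emph{grand}: the HPW combinatorics guarantees the relevant subarcs glue to an embedded arc, but one must verify the two resulting endpoints converge to ends in distinct elements of the grand splitting. Here the hypothesis $|\calS(\Sigma)|\ge 3$ is essential, exactly as in \lemref{Lemma:FGM6.8}, since it provides enough independent maximal-end classes to reroute an offending endpoint into a third class $E_3$ via the witness boundary $C_3\subset \del W$. Once grandness is confirmed, \corref{FGM6.6} supplies the short unicorn paths realizing the distance bound.

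Finally, having established (1) and (2) with $M = 3$, the guessing geodesics lemma yields a hyperbolicity constant $\delta$ depending only on $M$. Since $M=3$ is chosen independently of $\Sigma$ and of the grand splitting $\calS(\Sigma)$, the resulting $\delta$ is uniform, proving the last assertion. The overall structure is thus: reduce to the finite-type unicorn estimates via subsurface projection to $W$, upgrade the HPW unicorn combinatorics to \emph{grand} unicorns using the third end class, and feed the two verified conditions into Bowditch's criterion \cite{Bow}.
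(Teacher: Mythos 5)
Your working argument is essentially the paper's: connectivity of $g(\alpha,\beta)=\calN_1(\calU_G(\alpha,\beta))$ comes from \lemref{Lemma:FGM6.8}; property (1) follows because disjoint grand arcs admit no unicorns besides themselves, so $g(\alpha,\beta)=\calN_1(\alpha)\cup\calN_1(\beta)$, which has diameter at most $3$; property (2) is the Hensel--Przytycki--Webb slim-triangle combinatorics carried out on grand arcs, with the hypothesis $|\calS(\Sigma)|\ge 3$ used to reroute an offending endpoint into a third class of the grand splitting when an assembled unicorn fails to be grand --- this is exactly the paper's adaptation of Fanoni--Ghaswala--McLeay, where precise endpoints are replaced by elements of the grand splitting; and the uniform $\delta$ comes from the guessing geodesics lemma with $M=3$, independent of $\Sigma$.

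However, the step you advertise as the ``key conceptual point'' --- transferring the HPW estimates from $\calA(W)$ back to $\calG(\Sigma)$ via the projection $\pi_W$ --- does not work and should be deleted. \lemref{Lem:SubsurfaceProjections} gives $d_{\calA(W)}(\pi_W(\alpha),\pi_W(\beta)) \le d_{\calG(\Sigma)}(\alpha,\beta)$, so the projection only bounds grand-arc distances from \emph{below}, while properties (1) and (2) are \emph{upper} bounds on distances in $\calG(\Sigma)$; no upper bound can be pulled back through a map that only contracts distances, since two grand arcs with close (even identical) projections to $W$ can be arbitrarily far apart in $\calG(\Sigma)$. (In addition, $\pi_W$ of a grand unicorn is a multi-arc and is not canonically a unicorn of $\pi_W(\alpha)$ and $\pi_W(\beta)$.) Fortunately this transfer is never load-bearing in your proposal: the disjointness bounds in your argument for (2) are produced directly in $\Sigma$ by the unicorn construction together with the third-class rerouting through $C_3\subset\del W$, which is self-contained, and the role of the witness is only as the device for reaching the third end class, exactly as in the proof of \lemref{Lemma:FGM6.8}. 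With that framing sentence removed, your proof matches the paper's route (which defers property (2) to the FGM argument with the same modification), and in fact spells out more of the detail than the paper does.
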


\begin{proof}
By Lemma \ref{Lemma:FGM6.8}, $g(\alpha, \beta) = \calN_1 (\calU_{G}(\alpha,\beta))$ is a connected subgraph of $\calG(\Sigma)$.
Suppose first that $d_{\calG}(\alpha,\beta)\leq 1$.
This implies that the grand arcs $\alpha$ and $\beta$ are disjoint.
Therefore, $g(\alpha, \beta) = \calN_1 (\calU_{G}(\alpha,\beta)) = \calN_1(\alpha) \cup \calN_1(\beta)$, which immediately implies that $g(\alpha, \beta)$ has distance at most $3$, proving $(1)$.

We now wish to show that for every $\alpha, \beta, \gamma \in \calG(\Sigma)$, the subgraph $g(\alpha, \beta)$ is contained in the 3-neighbourhood of $g(\alpha, \gamma) \cup g(\beta, \gamma)$.
This proof is the same as the proof by Fanoni-Ghaswala-McLeay, but in place of considering the precise endpoints of the grand arcs, we consider the elements of the grand splitting which the grand arc converges to.
\end{proof}

As a corollary to Lemma \ref{Lem:DeltaHyp}, we find that if every two witnesses of the grand arc graph intersect, then the grand arc graph is $\delta$-hyperbolic.

We end this section by summarizing the results of the previous 
lemmas and corollaries into a complete classification of when $\calG(\Sigma)$ is $\delta$-hyperbolic:\\

\begin{proposition} ~\label{prop:Hyperbolic}
Suppose $\calS(\Sigma)$ is finite. Then:
\begin{itemize}
\item If $|\calS(\Sigma)|\le 1$, then $\calG(\Sigma)$ is empty.
\item If $1<|\calS(\Sigma)| \le 2$, and $\Sigma$ is not the once-punctured cantor tree, then $\calG(\Sigma)$ is not $\delta$-hyperbolic.
\item If $|\calS(\Sigma)| \ge 3$ or $\Sigma$ is the once-punctured 
cantor tree, then $\calG(\Sigma)$ is $\delta$-hyperbolic.
\end{itemize}
\end{proposition}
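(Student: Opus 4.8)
The plan is to assemble Proposition~\ref{prop:Hyperbolic} directly from the classification results already established in the excerpt, organizing the proof as a case analysis on the cardinality of $\calS(\Sigma)$. The proposition is a summary statement, so the main work is in correctly invoking and stitching together prior lemmas rather than proving anything new. First I would dispose of the case $|\calS(\Sigma)| \le 1$: if $|\calS(\Sigma)| = 0$ there are no maximal ends, and if $|\calS(\Sigma)| = 1$ then all maximal ends lie in a single element of the grand splitting, so no arc can converge to two maximal ends in \emph{different} elements of $\calS(\Sigma)$; hence there are no grand arcs and $\calG(\Sigma)$ is empty by definition.

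For the case $1 < |\calS(\Sigma)| \le 2$, that is $|\calS(\Sigma)| = 2$, with $\Sigma$ not the once-punctured Cantor tree, the strategy is to show the hypotheses of Proposition~\ref{prop:TwoWitnessesNotHyperbolic} are met. By Lemma~\ref{Lem:TwoEquivalenceClasses}, the condition $|\calS(\Sigma)| = 2$ is equivalent to $\calG(\Sigma)$ admitting two disjoint witnesses $W_1, W_2$ with $\chi(W_i) \le -2$ (condition (4) in that lemma). The one subtlety here is the exclusion clauses in Lemma~\ref{Lem:TwoEquivalenceClasses}, which assumes $\Sigma$ is neither the once-punctured Cantor tree nor the cycloctopus. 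I would need to handle the cycloctopus separately: by the remark following Lemma~\ref{Lem:TwoEquivalenceClasses}, the cycloctopus admits two disjoint essential witnesses, but they cannot both be chosen to be annuli. However, one can still extend these witnesses to obtain the required complexity $\chi(W_i) \le -2$, so that Proposition~\ref{prop:TwoWitnessesNotHyperbolic} still applies and yields a quasi-isometrically embedded $\ZZ^2$, obstructing hyperbolicity. Once the two disjoint witnesses of complexity at most $-2$ are in hand, Proposition~\ref{prop:TwoWitnessesNotHyperbolic} gives non-hyperbolicity immediately.

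For the case $|\calS(\Sigma)| \ge 3$, I would invoke Lemma~\ref{Lem:DeltaHyp} directly: it establishes that $g(\alpha,\beta) = \calN_1(\calU_G(\alpha,\beta))$ satisfies both hypotheses of the guessing geodesics lemma with $M = 3$, so $\calG(\Sigma)$ is $\delta$-hyperbolic for a uniform $\delta$ depending only on $M$ and hence independent of $\Sigma$. For the once-punctured Cantor tree, which falls outside the $|\calS(\Sigma)| \ge 3$ framework (it has $|\calS(\Sigma)| = 2$), I would cite Bavard's result~\cite{Bav} that the ray graph is $\delta$-hyperbolic, together with the earlier observation that the grand arc graph of the once-punctured Cantor tree coincides with Bavard's ray graph.

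The step I expect to be the main obstacle is the boundary case handling in the $|\calS(\Sigma)| = 2$ regime, specifically ensuring the cycloctopus is correctly accounted for. Since Lemma~\ref{Lem:TwoEquivalenceClasses} explicitly excludes both the once-punctured Cantor tree and the cycloctopus, the proposition's clean statement---which only carves out the once-punctured Cantor tree---requires verifying that the cycloctopus genuinely yields a non-hyperbolic grand arc graph through its two disjoint witnesses, even though those witnesses are not annuli. The resolution is that Proposition~\ref{prop:TwoWitnessesNotHyperbolic} requires only complexity $\chi(W_i) \le -2$ rather than the annulus property, and extending the cycloctopus witnesses to satisfy this bound suffices. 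All remaining steps are direct applications of the cited results, so the proof is essentially bookkeeping once this boundary case is settled.
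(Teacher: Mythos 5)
Your proposal is correct and follows essentially the same route as the paper, which itself presents Proposition~\ref{prop:Hyperbolic} as a summary assembled from the definition of grand arcs (for $|\calS(\Sigma)|\le 1$), Lemma~\ref{Lem:TwoEquivalenceClasses} together with Proposition~\ref{prop:TwoWitnessesNotHyperbolic} (for $|\calS(\Sigma)|=2$), Lemma~\ref{Lem:DeltaHyp} (for $|\calS(\Sigma)|\ge 3$), and Bavard's result for the once-punctured Cantor tree. Your explicit verification that the cycloctopus's two disjoint witnesses can be enlarged to have $\chi \le -2$ (so that Proposition~\ref{prop:TwoWitnessesNotHyperbolic} applies) is a detail the paper leaves implicit in the remark following Lemma~\ref{Lem:TwoEquivalenceClasses}, and you are right that it is the only genuine boundary case to check.
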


This, together with Proposition ~\ref{prop:InfDiam}, gives a proof of Theorem ~\ref{thm:InfDiamHyperbolic}.

\section{The action of the mapping class group}\label{Sec:QuasiContinuous}

The action of the mapping class group on 
the grand arc graph is not continuous.
This can be shown by following the proof of Proposition 6.3 in \cite{FGM}. For 
example, if the maximal ends are finite-orbit ends, then the grand arc graph is precisely the omnipresent arc graph and the proof follows exactly.
However, in this section we show that the 
action of the mapping class group on a $\delta$-hyperbolic grand arc graph is quasi-continuous, a weaker notion of continuity. Using this, we show that the action on the visible boundary of a $\delta$-hyperbolic 
grand arc graph is continuous.

To begin this section, we introduce the notion of quasi-continuity.
We then prove a lemma which tells us that we can choose our witness to have some helpful properties.
This lemma will help us prove that the action of the mapping class group on a $\delta$-hyperbolic grand arc graph is quasi-continuous, and conclude that the action on the boundary is continuous.

Next, we construct examples of loxodromic actions of the mapping class group on the grand arc graph.
Finally, we will introduce the notion on stability and prove that for infinite-type surfaces which are stable, the mapping class group acts on the grand arc graph with finitely many orbits.

\subsection{Quasi-continuity \& continuous action on the boundary}
The goal of this subsection is to prove that the action of the mapping class group on a $\delta$-hyperbolic grand arc graph is quasi-continuous. Using this, we will show that the action of the mapping class group on the visible boundary of a $\delta$-hyperbolic grand arc graph is continuous.

\subsubsection*{Quasi-continuity}
We begin this subsection by defining quasi-continuity by finding an equivalent condition for continuity of a topological group on a discrete graph, and then relaxing the definition.

Throughout this section, for a group $G$ acting on a set $X$, if $U \subset G$, and $A \subset X$, then we denote $U \circ A = \{ga : g\in U, a\in A\}$.

\begin{lemma} \label{lem:continuity-conditions}
Let $G$ be a topological group acting on a discrete metric space $X$. Then the following conditions are equivalent:
\begin{itemize}
\item The action $G\times X \to X$ defined by $\pi(g,x) = gx$ is continuous.
\item For any $x\in X$ there exists an open neighbourhood $id\in U_x \subset G$ such that $U_x \circ \{x\} = \{x\}$.
\end{itemize}
\end{lemma}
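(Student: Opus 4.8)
The plan is to prove the two conditions equivalent by a pair of implications, treating this as a routine exercise in the definition of continuity adapted to the fact that $X$ carries the discrete topology. The key observation that drives everything is that in a discrete space, the singletons $\{x\}$ are open, so continuity of the action at a point amounts to finding a neighborhood of the identity that fixes that point. First I would unwind what continuity of $\pi\colon G\times X \to X$ means: since $X$ is discrete, $\pi$ is continuous if and only if for every $(g,x)\in G\times X$, the preimage $\pi^{-1}(\{gx\})$ is open, i.e.\ contains a basic open neighborhood $V\times\{y\}$ of $(g,x)$ (recall $\{x\}$ is open in $X$).

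For the forward direction, I would assume $\pi$ is continuous and fix $x\in X$. Applying continuity at the point $(\id, x)$, and using that $\{x\} = \{\,\id\cdot x\,\}$ is open, there is an open neighborhood $\id \in U_x \subset G$ and an open set containing $x$ (which we may take to be $\{x\}$ itself) such that $\pi(U_x \times \{x\}) = U_x \circ \{x\} \subseteq \{x\}$. Since each $g\in U_x$ sends $x$ to some point of $X$ and that point must lie in $\{x\}$, we get $gx = x$ for all $g\in U_x$, hence $U_x\circ\{x\} = \{x\}$, which is exactly the second condition.

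For the reverse direction, I would assume the pointwise-stabilizer condition and verify continuity of $\pi$ at an arbitrary point $(g_0, x_0)$. Set $y_0 = g_0 x_0$. By hypothesis there is an open $\id \in U_{x_0} \subset G$ with $U_{x_0}\circ\{x_0\} = \{x_0\}$. I would then take the neighborhood $g_0 U_{x_0}$ of $g_0$ in $G$ (open, since multiplication by $g_0$ is a homeomorphism of $G$) together with the open set $\{x_0\}\subset X$, and check that $\pi\big(g_0 U_{x_0}\times \{x_0\}\big)=\{y_0\}$: for $g = g_0 u$ with $u\in U_{x_0}$, we have $g x_0 = g_0(u x_0) = g_0 x_0 = y_0$. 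Thus the full preimage $\pi^{-1}(\{y_0\})$ contains the basic open set $g_0 U_{x_0}\times\{x_0\}$ around $(g_0,x_0)$, and since $\{y_0\}$ is open and $(g_0,x_0)$ was arbitrary, $\pi$ is continuous.

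There is no serious obstacle here; the only point requiring care is to use the full product-topology definition of continuity of the action (rather than just separate continuity in each variable) and to invoke that left translation $g\mapsto g_0 g$ is a homeomorphism of the topological group $G$, which is what lets us transport the identity-neighborhood $U_{x_0}$ to a neighborhood of $g_0$. The discreteness of $X$ is used in both directions to guarantee that singletons are open, so that checking continuity reduces to the combinatorial statement about stabilizers.
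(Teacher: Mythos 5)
Your proposal is correct and follows essentially the same route as the paper: the forward direction extracts a basic open box $U_x\times\{x\}$ inside $\pi^{-1}(\{x\})$ using discreteness, and the reverse direction translates the identity-neighborhood by left multiplication (the paper writes the point as $(h,y)$ with $hy=x$ and uses $hU_y\times\{y\}\subseteq\pi^{-1}(\{x\})$, which is your $g_0U_{x_0}\times\{x_0\}$ computation with different labels). No gaps.
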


\begin{proof}
Let $x\in X$ be arbitrary. If the action $\pi:G\times X \to X$ is continuous, then in particular, we must have that $\pi^{-1}(\{x\})$ is 
open. Since $(id, x) \in \pi^{-1}(\{x\})$, 
and by discreteness of $X$, there must be an open neighbourhood $U_x \times \{x\} \in \pi^{-1}(\{x\})$. By definition of $\pi$, it follows that $U_x \circ \{x\} = \{x\}$, and we're done.

Next, we suppose that for any $x \in X$, there exists an open neighbourhood $id \in U_x \subset G$ such that $U_x \circ \{ x \} = \{ x \}$. 
Fix some $x \in X$. 
We wish to show that $\pi^{-1}(\{x\})$ is open. For $(h,y) \in \pi^{-1}(\{x\}) \subset G\times X$ we let $U_y$ be an open neighbourhood of $id$ such that 
$U_y \circ \{y\} = \{y\}$. Since the group action of $G$ on itself is by homeomorphisms, it follows that 
$hU_y$ is an open neighbourhood of $h$. Moreover, $hU_y \circ \{y\} = \{x\}$, meaning that 
$hU_y \times \{y\}$ is an open neighbourhood of 
$(h,y)$ contained in $\pi^{-1}(\{x\})$. Thus, 
$\pi^{-1}(\{x\})$ is open, and since $X$ is discrete, this completes the proof.
\end{proof}

To define quasi-continuity, we first recall the definition of Hausdorff distance.
Let $Y$ and $Z$ be two non-empty subsets of a metric space $(X,d)$.
The \textit{Hausdorff distance} is defined to be
\begin{align*}
d_{X}(Y,Z) &= \max \left\{ \sup\limits_{y\in Y} d(y,Z), \sup\limits_{z \in Z} d(Y,z)  \right\}\\
\end{align*}
where $d(a,B) = \inf\limits_{b \in B} d(a,b)$ is defined for any $a\in X$ and $B\subset X$.
Using the definition of Hausdorff distance, we define the notion of \textit{quasi-continuity}:

\begin{definition}
Let $G$ be a group acting on a metric space $X$. We 
say that the action is \textit{$N$-quasi-continuous} 
if for any $x\in X$ there exists an open 
neighbourhood $id \in U_x\subset G$ such that $d_X(U_x\circ \{x\},\{x\}) \le N$.
We say that the action is \textit{quasi-continuous} if it is $N$-quasi-continuous for some $N\in \NN$.
\end{definition}

\begin{remark}
  The action of a topological group on a metric 
  space is continuous if and only if for every 
  $\eps > 0$, it is $\eps$-quasi-continuous.
\end{remark}

\begin{remark}
In the above definition, if an action is quasi-continuous with constant $N = 0$, Lemma ~\ref{lem:continuity-conditions} asserts that the action is continuous.
\end{remark}

\begin{proposition}\label{prop:QuasiContinuous}
If $\calG(\Sigma)$ is $\delta$-hyperbolic, then the action $\pi: \Map(\Sigma) \times \calG(\Sigma) \to \calG(\Sigma)$ is quasi-continuous.
\end{proposition}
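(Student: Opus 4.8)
The plan is to verify the second condition of Lemma~\ref{lem:continuity-conditions} in its quasi-continuous relaxation: given a grand arc $\alpha \in \calG(\Sigma)$, I must produce an open neighbourhood $\id \in U_\alpha \subset \Map(\Sigma)$ and a uniform constant $N$ such that $d_{\calG(\Sigma)}(\id, g\cdot\alpha) \le N$ for every $g \in U_\alpha$. The natural topology on $\Map(\Sigma)$ is the compact-open topology, so a basic open neighbourhood of the identity consists of mapping classes that agree with the identity on a fixed compact subsurface $K \subset \Sigma$ (up to isotopy). The idea is that if $g$ fixes a large enough compact piece, then $g\cdot\alpha$ can only differ from $\alpha$ ``out near the ends,'' and this bounded end-behaviour should translate into a bounded distance in the grand arc graph.

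First I would fix a witness $W$ for $\calG(\Sigma)$, whose existence is guaranteed by Lemma~\ref{lem:witness-exists}, and realize $\alpha$ in minimal position with $\del W$. Since $\alpha$ is a grand arc, by the finiteness-of-projection lemma its intersection $\alpha \cap W$ consists of finitely many arc segments, so there is a compact subsurface $K \supset W$ containing $\alpha \cap W$ together with the relevant portion of $\del W$. I would then take $U_\alpha$ to be the set of mapping classes supported on the complement of $K$, i.e.\ those that restrict to the identity on $K$. For such a $g$, the projections satisfy $\pi_W(g\cdot\alpha) = \pi_W(\alpha)$, because $g$ does not move $\alpha \cap W$. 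The key point is that two grand arcs which agree on a witness $W$, and whose behaviour only differs in the ends, are a uniformly bounded distance apart in $\calG(\Sigma)$: this is essentially the content of the connectivity argument of Proposition~\ref{prop:InfDiam}, where one connects two grand arcs sharing endpoint-classes by a short grand unicorn path. Here $\alpha$ and $g\cdot\alpha$ converge to the \emph{same} two elements of the grand splitting (since $g$ fixes $K$ and hence does not permute the separating structure near $\del W$), so I would invoke Case (2)/Case (3) of that argument to bound $d_{\calG(\Sigma)}(\alpha, g\cdot\alpha)$ by a constant depending only on $\delta$ and the combinatorics of $W$, not on $g$.

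The main obstacle I anticipate is making the ``agree on the witness $\Rightarrow$ boundedly close'' step genuinely uniform and correct. Agreeing on $W$ controls the subsurface projection, but the projection lower bound in Lemma~\ref{Lem:SubsurfaceProjections} goes the wrong way for an \emph{upper} bound on distance: equal projections do \emph{not} immediately imply small distance. So I cannot argue purely through projections; I must actually build a short path of grand arcs between $\alpha$ and $g\cdot\alpha$. The cleanest route is to show that $\alpha$ and $g\cdot\alpha$ admit a common disjoint grand arc, or a chain of length $\le N$ of such arcs, using the fact that they coincide on a witness and converge to the same grand-splitting classes; I would feed this into the grand unicorn path machinery (Lemma~\ref{lem:exists_grand_unicorn} and the neighbourhood-connectivity of Lemma~\ref{Lemma:FGM6.8}) to extract a uniform bound. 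This requires $|\calS(\Sigma)| \ge 3$, which is exactly the standing hypothesis that $\calG(\Sigma)$ is $\delta$-hyperbolic, so the extra grand-splitting class needed to build connecting arcs is available.

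Once the uniform bound $N$ is established, the conclusion is immediate: every $\alpha$ has a neighbourhood $U_\alpha$ with $d_{\calG(\Sigma)}(U_\alpha \circ \{\alpha\}, \{\alpha\}) \le N$, which is precisely the definition of $N$-quasi-continuity, and hence the action is quasi-continuous. I would close by remarking that $N$ can be taken independent of $\alpha$ because the combinatorial bound coming from the grand unicorn/guessing-geodesics estimates depends only on $\delta$ and on the topological type of a witness, the latter being controllable via Lemma~\ref{lem:witness-exists}.
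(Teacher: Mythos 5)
Your overall skeleton matches the paper's: take the neighbourhood of the identity to consist of mapping classes restricting to the identity on a finite-type subsurface adapted to $\alpha$, then join $\alpha$ to $g\alpha$ by a uniformly short chain that detours through a third grand-splitting class. But the central step --- actually producing that uniformly short chain --- is exactly what you leave unproved, and the tools you cite cannot supply it. The connectivity argument of Proposition~\ref{prop:InfDiam} and the unicorn lemmas (Lemma~\ref{lem:exists_grand_unicorn}, Lemma~\ref{Lemma:FGM6.8}) produce paths whose lengths depend on the intersection pattern of the two arcs: they prove connectivity, not a diameter bound. Moreover, the geometric intersection number of $\alpha$ with $g\alpha$ is \emph{not} bounded as $g$ ranges over your $U_\alpha$: if $g$ is a large power of a Dehn twist about a curve $c$ lying in a complementary component of $K$ that $\alpha$ enters essentially, then $i(g\alpha,\alpha)$ grows like $n\, i(c,\alpha)^2$, so any unicorn-type path extracted from the intersection pattern has unbounded length. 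Equal projections to $W$, as you yourself note, give no upper bound either. So the proposal, as written, stalls precisely at the step where the theorem lives.

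There is also a concrete flaw in the choice of $K$: you require only $K \supset W \cup (\alpha \cap W)$, which permits $\alpha \cap K$ to be disconnected, since $\alpha$ may make compact excursions out of $K$ and back. Split $\alpha$ at a point $x \in \alpha \cap W$ into $\alpha_1 \cup \alpha_2$; if both halves enter the same complementary component $V$ of $K$, an element $g \in U_\alpha$ supported in $V$ can force $g(\alpha_2)$ to cross $\alpha_1$ essentially, and then the natural detour chain $\alpha,\ \alpha_1 * \gamma * \eta,\ \bar\eta * \bar\gamma * g(\alpha_2),\ g(\alpha)$ breaks because its two middle arcs intersect. The paper repairs exactly this with Lemma~\ref{Lem:SpecificWitness}: enlarge $W$ to a witness $W_\alpha$ containing the entire compact part of $\alpha$, so that $\alpha \cap W_\alpha$ is \emph{connected}. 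Then $\alpha$ crosses $W_\alpha$ exactly once, so $\alpha_1 \subset \Sigma_1 \cup W_\alpha$ and $\alpha_2 \subset W_\alpha \cup \Sigma_2$ for \emph{distinct} complementary components $\Sigma_1 \neq \Sigma_2$ (distinct because a witness separates grand-splitting classes and the endpoints of $\alpha$ lie in different classes); since $g$ is the identity on $W_\alpha$ and preserves each complementary component, $\alpha_1$ and $g(\alpha_2)$ meet only at $x$. Taking $\eta$ an arc into a third complementary component $\Sigma_3$ (this is where $|\calS(\Sigma)| \geq 3$ enters) and $\gamma$ a connecting arc in $W_\alpha$ meeting $\alpha$ only at $x$, the chain above is a genuine path of grand arcs of length $3$, giving $3$-quasi-continuity. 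In short, your outline needs two repairs: replace ``$K \supset \alpha \cap W$'' by a witness meeting $\alpha$ in a connected arc, and replace the appeal to unicorn machinery by this explicit three-step detour.
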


Before we prove Proposition \ref{prop:QuasiContinuous}, we will prove the following technical lemma, which tells us that we are able to choose our witness to have some helpful properties:

\begin{lemma}\label{Lem:SpecificWitness}
Suppose $\calG(\Sigma)$ is nonempty. Then for any $\alpha \in \calG(\Sigma)$, there 
exists a witness $S_{\alpha} \subset \Sigma$, 
such that $S_{\alpha}\cap \alpha$ is connected.
\end{lemma}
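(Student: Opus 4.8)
The plan is to start from any witness furnished by Lemma~\ref{lem:witness-exists} and to surger it along $\alpha$ until its intersection with $\alpha$ becomes a single arc, all the while preserving the property of being a witness. Fix a hyperbolic metric on $\Sigma$ and realize $\alpha$ as a geodesic, with continuous extension $\bar\alpha\colon[0,1]\to\bar\Sigma$ sending $0\mapsto e_1$ and $1\mapsto e_2$, where $e_1\in E_1$ and $e_2\in E_2$ lie in distinct elements of $\calS(\Sigma)$. Let $W_0\subset\Sigma$ be a witness, chosen as in the proof of Lemma~\ref{lem:witness-exists} so that $\Sigma\ssm W_0=\bigsqcup_i\Sigma_i$, where each $\Sigma_i$ has a single boundary curve $\gamma_i$ and end space $U_i$, the clopen sets $U_i$ are pairwise disjoint, and each $U_i$ contains exactly one maximal class $E_i$. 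In particular every component of $\Sigma\ssm W_0$ is \emph{class-pure}: its ends meet at most one element of $\calS(\Sigma)$. Finally, isotope $\del W_0$ into minimal position with $\alpha$ (for instance by taking geodesic boundary).

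Because $W_0$ is of finite type and $\del W_0$ is in minimal position with $\alpha$, the finiteness lemma for subsurface projections proved in Section~\ref{Sec:Connectivity} guarantees that $\alpha\cap W_0$ has only finitely many components, say $c_1,\dots,c_k$, ordered along $\alpha$. The two outermost pieces of $\alpha$ are the rays converging to $e_1$ and $e_2$, which leave $W_0$ for good inside the class-pure components containing $e_1$ and $e_2$. The intermediate pieces of $\alpha$ lying outside $W_0$ are honest compact subarcs $\beta_1,\dots,\beta_{k-1}$, where $\beta_j$ joins $c_j$ to $c_{j+1}$, has interior disjoint from $W_0$, and is contained in a single complementary component $\Sigma_{i(j)}$; in particular both endpoints of $\beta_j$ lie on the one boundary curve $\gamma_{i(j)}$. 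If $k=1$ we are already done, so suppose $k\ge 2$.

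The surgery is a tubing along a connecting arc. Replace $W_0$ by $W_1:=N(W_0\cup\beta_1)$, a regular neighbourhood, which is again a finite-type subsurface obtained by attaching a band to $W_0$ along $\beta_1$. Taking the band thin enough that it meets $\alpha$ only along $\beta_1$, the intersection becomes $\alpha\cap W_1=(c_1\cup\beta_1\cup c_2)\sqcup c_3\sqcup\cdots\sqcup c_k$, so the number of components of $\alpha\cap W_1$ has dropped by one. Moreover $W_1$ is still a witness: passing from $W_0$ to $W_1$ removes the band $N(\beta_1)$ from the complementary piece $\Sigma_{i(1)}$, and since $\beta_1$ is a properly embedded arc of $\Sigma_{i(1)}$ with both endpoints on $\gamma_{i(1)}$, cutting $\Sigma_{i(1)}$ along it yields one or two subsurfaces whose end spaces are subsets of $U_{i(1)}$. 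These pieces remain class-pure, and all other complementary components are unchanged, so every component of $\Sigma\ssm W_1$ still meets at most one element of $\calS(\Sigma)$; hence $W_1$ separates the maximal classes and therefore sees every grand arc. Iterating this tubing $k-1$ times produces a witness $S_\alpha$ with $\alpha\cap S_\alpha$ connected, as desired.

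The crux of the argument is the verification that each $W_i$ remains a witness: one must check that tubing along a subarc of $\alpha$ never amalgamates two distinct maximal classes into a single complementary region. This is precisely why it is essential that each connecting subarc $\beta_j$ lies in one complementary component and therefore returns to the same boundary curve, so that cutting merely subdivides an already class-pure region rather than joining two of them. The finiteness lemma is the other indispensable input: without it $\alpha\cap W_0$ could have infinitely many components and the inductive surgery would fail to terminate.
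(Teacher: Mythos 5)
Your proof is correct, but it reaches the desired witness by a genuinely different route than the paper. The paper's proof is a one-step enlargement: starting from an arbitrary witness $W$, it takes two flare surfaces $U_1,U_2$ (Lemma 3.1 of \cite{FGM}) with $e_i$ lying in $U_i$, $U_i\cap W=\emptyset$, and $|\del U_i\cap\alpha|=1$, observes that $\alpha'=\alpha\cap(U_1\cup U_2)^c$ is compact, and sets $S_\alpha=W\cup X$, where $X$ is a regular neighbourhood of $\alpha'\cup\del U_1\cup\del U_2$; the witness property is then automatic because $S_\alpha$ contains $W$, and a subsurface containing a witness is a witness (the same containment principle used in the proof of Lemma~\ref{Lem:TwoEquivalenceClasses}). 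You instead tube along the excursions of $\alpha$ one at a time, which trades the flare surfaces of \cite{FGM} for two other inputs: the finiteness lemma of Section~\ref{Sec:Connectivity} (to guarantee finitely many excursions, hence termination of the induction) and class-purity of the complementary components (to preserve the witness property). Both arguments ultimately attach a neighbourhood of the compact middle portion of $\alpha$ to a witness, so they produce essentially the same surface; your version is self-contained modulo the finiteness lemma, while the paper's is shorter and needs no induction. Two remarks. First, your class-purity bookkeeping, while correct, is more work than necessary: each $W_j$ contains the witness $W_0$, so the containment principle gives the witness property immediately, and it would also let you start from an arbitrary witness with geodesic boundary rather than the specific punctured-sphere witness of Lemma~\ref{lem:witness-exists}. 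Second, a small imprecision: your claim that each excursion ``returns to the same boundary curve'' holds only at the first stage---after one tubing a complementary component may acquire two boundary circles---but this is harmless, since what your induction actually uses, and what does persist, is that each $\beta_j$ lies in a single class-pure complementary component, so that cutting along it merely subdivides that component rather than merging two maximal classes.
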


\begin{proof}
Let $W$ be a witness, and let $e_1,e_2$ be the endpoints of $\alpha$. Let $U_1$ and $U_2$ be two flare surfaces as in Lemma 3.1 of \cite{FGM} for which $U_i\cap W =\emptyset$, $\del U_i$ is a simple closed curve, $|\del U_i \cap \alpha| = 1$, and $e_i$ lies in $U_i$. Since $\alpha$ converges to $e_1$ and $e_2$, it follows that the arc segment $\alpha' = \alpha \cap (U_1\cup U_2)^c$ is compact. Let $X$ be a regular neighbourhood of $\alpha' \cup \del U_1 \cup \del U_2$. Note that $X$ is a finite-type surface intersecting $W$, since $W$ is a witness. The surface $S_{\alpha} = W \cup X$ is then the desired surface.
\end{proof}

Using Lemma \ref{Lem:SpecificWitness}, we can now prove Proposition \ref{prop:QuasiContinuous}.

\begin{proof}[Proof of Proposition \ref{prop:QuasiContinuous}]
Let $\alpha$ be a grand arc. We will find an 
open neighbourhood, $id\in U\subset G$ such that 
$\sup_{h\in U} d(h\alpha,\alpha) < 3$. Since $\alpha$ is a grand arc, we fix a witness $W_{\alpha}$ for which $\alpha\cap W_{\alpha}$ is connected, as in Lemma ~\ref{Lem:SpecificWitness}.
Let 

\begin{align*}
U = U_{W_{\alpha}} = \{ \vphi \in \Map(\Sigma) : \vphi|_{W_{\alpha}} = id \}.
\end{align*}
To complete the proof, it suffices to show that 
$\sup_{h\in U_{W_{\alpha}}} d(h\alpha,\alpha) \le 3$. 

Let $h \in U_{W_{\alpha}}$. Note that $h$ must send any complementary component of $W_{\alpha}$ that contains a 
maximal end to itself. In particular, let $\Sigma_1$, 
$\Sigma_2$, and $\Sigma_3$ be connected components of 
$\Sigma\ssm W_{\alpha}$ each of which contain a maximal 
end, and without loss of generality, assume 
that $\alpha$ intersects only $\Sigma_1$, $W_{\alpha}$, and 
$\Sigma_2$.
Thinking of the arc $\alpha$ as a function, 
$\alpha \from (0,1) \to \Sigma$, we can orient $\alpha$ so that there exists $t_1 < t_2 \in (0,1)$, and for $t \leq t_1$ $\alpha(t) \in \Sigma_1$, and for $t \geq t_2$
$\alpha(t) \in \Sigma_2$.

\begin{figure}[ht] 
\includegraphics[width=0.6\textwidth]{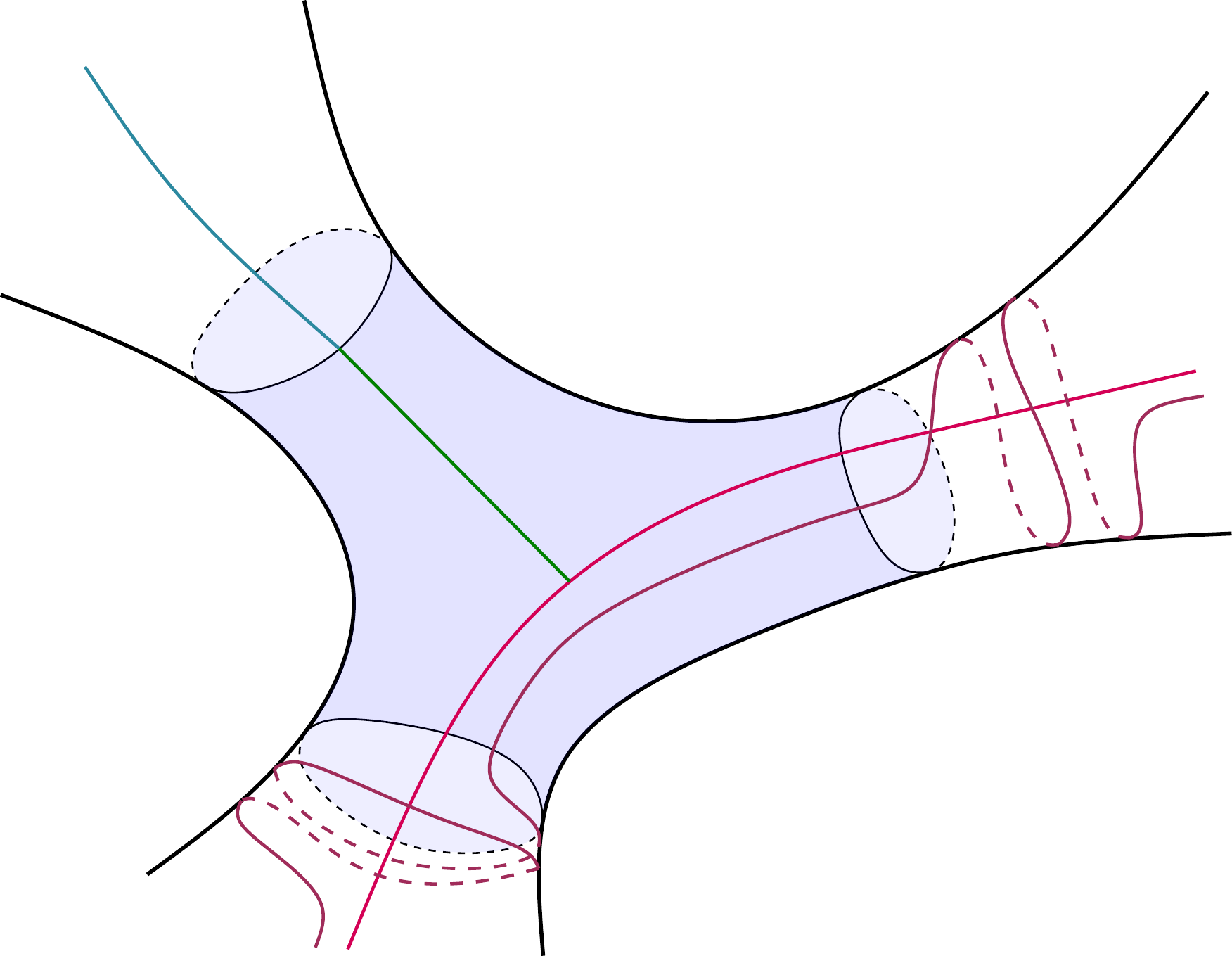}
\put(-217,143){\small{$\eta$}}
\put(-178,104){\small{$\gamma$}}
\put(-162,62){\small{$\alpha$}}
\put(-135,62){\small{$\vphi(\alpha)$}}
\caption{The components required to construct a path in the grand arc graph between a grand arc $\alpha$ and the image of $\alpha$ under a mapping class $\vphi \in U_{W_{\alpha}} \subset \Map(\Sigma)$.}
\label{fig:quasicontinuity}
\end{figure}

Let $\eta$ be a simple arc with exactly one endpoint on $\partial W_{\alpha}$, whose interior is contained in $\Sigma_3$, and which converges to a maximal 
end in $\Sigma_3$. Fix some hyperbolic metric on $\Sigma$, which induces a hyperbolic metric on $W_{\alpha}$. Let $x$ be the point on $W_{\alpha} \cap \alpha$ which is closest 
to $\eta\cap W_{\alpha}$, and let $\gamma$ be a simple arc 
whose endpoints are $\eta\cap W_{\alpha}$ and $x$. Let 
$\alpha \ssm x = \alpha_1 \sqcup \alpha_2$, 
where $\alpha_i$ intersects $\Sigma_i$. 
See Figure \ref{fig:quasicontinuity} for a picture of the above construction.
The following path proves that $\sup_{h\in U_{W_{\alpha}}} d(h\alpha,\alpha) \le 3$, which completes our proof: 
\begin{align*}
\{\alpha, \alpha_1 * \gamma * \eta, \bar\eta*\bar\gamma*\vphi(\alpha_2), \vphi(\alpha)\}
\end{align*}

\end{proof}

\subsubsection*{Continuous Action on $\del \calG(\Sigma)$}
In this subsection, we show that the action of the mapping class group on the visible boundary is continuous.
Let $X$ be a $\delta$-hyperbolic metric space, and let $\del X$ be its visible 
boundary with respect to some basepoint 
$x_0 \in X$.

\begin{theorem} 
  If $G$ acts on $X$ quasi-continuously by
isometries, then the action of $G$ on $\del X$ is continuous in the sense that the map $\pi:G\times \del X \to \del X$ is continuous.
\end{theorem}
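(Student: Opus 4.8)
The plan is to work with the standard Gromov-product description of the topology on $\partial X$: fixing the basepoint $x_0$, a neighbourhood basis of a visible boundary point $\xi$ is given by the sets $\{\eta \in \partial X : (\xi \mid \eta)_{x_0} > R\}$ as $R \to \infty$, where $(\cdot\mid\cdot)_{x_0}$ denotes the Gromov product and visibility guarantees an honest geodesic ray from $x_0$ to $\xi$. I will use three standard facts about $\delta$-hyperbolic spaces \cite{GH}: isometry invariance $(g\xi \mid g\eta)_{gx_0} = (\xi \mid \eta)_{x_0}$; the change-of-basepoint estimate $|(\xi\mid\eta)_x - (\xi\mid\eta)_y| \le d(x,y) + C\delta$; and the hyperbolic inequality $(\xi\mid\zeta)_{x_0} \ge \min\{(\xi\mid\eta)_{x_0},\, (\eta\mid\zeta)_{x_0}\} - C\delta$. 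I will also use that each fixed $g \in G$ acts on $\partial X$ as a homeomorphism, which is immediate from the first two facts.

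To prove joint continuity of $\pi$ at a point $(g_0, \xi_0)$, I would first reduce to the case $g_0 = \mathrm{id}$. Writing an arbitrary $h$ near $g_0$ as $h = g_0 k$ with $k = g_0^{-1}h$ ranging over a neighbourhood $U'$ of the identity, we have $\pi(h,\eta) = g_0(k\eta)$; since $g_0$ is a homeomorphism of $\partial X$, it suffices to show that $(k,\eta) \mapsto k\eta$ is continuous at $(\mathrm{id}, \xi_0)$ and then pull the target neighbourhood back by $g_0$. So fix a target radius $R$; I must produce a neighbourhood $U'$ of $\mathrm{id}$ and a neighbourhood $W$ of $\xi_0$ with $(\xi_0 \mid k\eta)_{x_0} > R$ for all $k \in U'$, $\eta \in W$. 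By the hyperbolic inequality, $(\xi_0 \mid k\eta)_{x_0} \ge \min\{(\xi_0 \mid k\xi_0)_{x_0},\ (k\xi_0 \mid k\eta)_{x_0}\} - C\delta$. The second term is easy: isometry invariance and the change-of-basepoint estimate give $(k\xi_0\mid k\eta)_{x_0} \ge (\xi_0\mid\eta)_{x_0} - d(x_0, kx_0) - C\delta$, and $N$-quasi-continuity at $x_0$ provides a neighbourhood of $\mathrm{id}$ on which $d(x_0, kx_0) \le N$; shrinking $W$ so that $(\xi_0\mid\eta)_{x_0}$ is large then pushes this term above $R + C\delta$.

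The crux is the first term, $(\xi_0 \mid k\xi_0)_{x_0}$, and this is exactly where quasi-continuity must be invoked at a point other than $x_0$. A bounded displacement of $x_0$ — all that quasi-continuity at $x_0$ provides — cannot by itself control where a boundary point is sent, since a bounded move of the basepoint can move a boundary point arbitrarily far. To remedy this I would take the geodesic ray $\rho$ from $x_0$ to $\xi_0$ furnished by visibility, set $p = \rho(T)$ for a large $T$ to be chosen, and apply $N$-quasi-continuity at both $x_0$ and $p$; intersecting the two neighbourhoods of $\mathrm{id}$ (legitimate since $G$ is a topological group) yields $U'$ on which $d(kx_0, x_0) \le N$ and $d(kp,p) \le N$ simultaneously. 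For such $k$, the ray $k\rho$ represents $k\xi_0$, starts within $N$ of $x_0$, and passes within $N$ of $p$. A direct triangle-inequality computation with the points $\rho(s)$ and $k\rho(s)$ — both backtracking to near $p$ — gives $(\rho(s) \mid k\rho(s))_{x_0} \ge T - N$ for all large $s$, hence $(\xi_0 \mid k\xi_0)_{x_0} \ge T - N - C\delta$. Choosing $T$ large enough (depending on $R$, $N$, $\delta$) forces the first term above $R + C\delta$ as well, and combining the two bounds completes the estimate; unwinding the reduction yields continuity at $(g_0, \xi_0)$.

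The main obstacle, as just indicated, is conceptual rather than computational: one must recognize that the uniform constant $N$ in quasi-continuity is harmless at infinity precisely because the boundary topology registers only divergence of Gromov products, so a bounded error is invisible there — but that one must feed quasi-continuity a far-out point $p = \rho(T)$, with the neighbourhood $U_p$ allowed to depend on the target $R$ through $T$, in order to pin the ray $k\rho$ long enough to control $k\xi_0$. Secondary points requiring care are the bookkeeping of the additive $\delta$- and $N$-errors and the verification that the sets $\{\eta : (\xi_0 \mid \eta)_{x_0} > R\}$ genuinely form a neighbourhood basis of the visible boundary point; both are routine given the standard theory.
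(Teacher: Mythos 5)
Your proof is correct, and its engine is the same as the paper's: quasi-continuity is useless if applied only at the basepoint, so one feeds it points arbitrarily far out along a ray representing the boundary point, intersects the finitely many resulting identity neighbourhoods (legitimate in a topological group), and exploits the fact that the uniform error $N$ is invisible in the boundary topology. The implementations differ in two ways worth recording. First, the paper argues sequentially and coarsely: it fixes the boundary point $[\alpha]$, takes $g_n \to \id$, applies quasi-continuity at \emph{every} point $\alpha(0),\hdots,\alpha(M)$ of an initial segment of the ray, and concludes that $g_n\alpha$ converges to $\alpha$ uniformly on compact sets up to error $N$; you instead work with Gromov products and neighbourhoods, and your triangle-inequality computation shows that quasi-continuity at just \emph{two} points, $x_0$ and $p=\rho(T)$, already forces $(\xi_0 \mid k\xi_0)_{x_0} \ge T-N$ up to hyperbolicity constants, which is a leaner bookkeeping of the same phenomenon. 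Second, and this is a genuine gain: you prove joint continuity of $\pi: G\times \del X \to \del X$, letting both the group element and the boundary point vary, with the term $(k\xi_0\mid k\eta)_{x_0}$ handled by isometry invariance and change of basepoint. The paper's proof as written holds the boundary point fixed and varies only the group element, so it literally establishes continuity of the orbit maps $g\mapsto g\xi$; since separate continuity does not formally imply joint continuity, and since a sequential argument tacitly assumes first countability, your neighbourhood-based argument actually proves the statement in the generality in which it is asserted, at the cost of invoking the standard Gromov-product formalism for the boundary of a (possibly non-proper) hyperbolic space.
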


\begin{proof}
Let $N \in \NN$ be such that $G$ acts on $X$ $N$-quasi-continuously. Let $\alpha:\NN \to X$ be a quasi-geodesic ray, and let $g_0,g_1,\hdots$ be a sequence 
  of group elements converging to $g$. Without loss of generality, assume that 
  $g_n \to \id$. We will show that the geodesics $\{g_n\circ \alpha\}_{n}$ 
  coarsely converge to $\alpha$ uniformly on compact sets. Let $K\subset \NN$ be compact, 
  and contained in $\{0,\hdots,M\}$. Let $U_{\alpha(1)} \hdots, U_{\alpha(M)}$ 
  be neighbourhoods of the identity in $G$ for which 
  $d_X(U_{\alpha(i)}\circ \alpha(i), \alpha(i)) < N$. In particular, it 
  follows that $U = \cap_{i\le M} U_{\alpha(i)}$ is an open neighbourhood of the 
  identity for which $d_X(U\circ \alpha(i),\alpha(i)) < N$ for any $i\le M$. Since 
  $\{g_n\}$ converge to the identity, it follows that after a certain point, 
  $g_n \in U$. Thus, for all $n > N_0$, we have $d_X(g_n \circ \alpha(i),\alpha(i)) \le N$ 
  for all $i \le M$, meaning that $g_n \circ \alpha$ converges to $\alpha$ on compact 
  sets up to error bounded by $N$. In other words, the sequence $\{[g_n\alpha]\}$ 
  converges to $\alpha$ in $\del X$, and the action of $G$ on $\del X$ is continuous.
\end{proof}

In particular, it follows that if $\calG(\Sigma)$ is $\delta$-hyperbolic, then the action of $\Map(\Sigma)$ on $\del \calG(\Sigma)$ is continuous.

\subsection{Loxodromic actions}
In this subsection, we construct examples of loxodromic actions of $\Map(\Sigma)$ on the grand arc graph coming from pseudo-Anosov mapping classes defined on witnesses. More precisely, we prove the following theorem:\\

\noindent \textbf{Theorem \ref{thm:lox-action}.}
\textit{Let $W$ be a witness, and let $\vphi$ 
be a pseudo-Anosov mapping class that fixes the boundary of $W$. Let $\bar\vphi\in \Map(\Sigma)$ be the homeomorphism fixing 
$W^c$ and acting as $\vphi$ on $W$. Then 
$\bar\vphi$ acts loxodromically on $\calG(\Sigma)$.}

\begin{proof}
Since $\calG(\Sigma)$ is $\delta$-hyperbolic, 
it suffices to show that the translation 
length of $\bar\vphi$ is bounded from below. To do this, we examine $\vphi$ acting on the arc complex of a witness, like in the proof of Proposition ~\ref{prop:TwoWitnessesNotHyperbolic}.

Let $\alpha_0,\alpha_1,\hdots,\alpha_n=\bar\vphi(\alpha_0)$ be a path in 
$\calG(\Sigma)$ from an arc $\alpha_0$ to its image under $\bar\vphi$. We choose $\alpha_i$ to lie in minimal position with respect to the boundary of $W$. For each $0\le i < n$, let $a_i\in \calA(W)$ be a connected component of $W\cap \alpha_i$, and let $a_n = \vphi(a_0)$. Since the $\alpha_i$'s are in minimal position, this guarantees that $a_0,\hdots,a_n$ is a path in $\calA(W)$. Since $\vphi$ acts loxodromically on $\calA(W)$, it follows that its translation length is bounded from below by some $K\in \NN$. Thus, $n\ge K$, which proves the claim.
\end{proof}

In fact, the theorem above can be used to give a much shorter proof of Proposition ~\ref{prop:TwoWitnessesNotHyperbolic}. Recall that Proposition \ref{prop:TwoWitnessesNotHyperbolic} states that if $\calG(\Sigma)$ admits two disjoint witnesses, each with Euler characteristic less than $-2$, then $\calG(\Sigma)$ is not $\delta$-hyperbolic.
The proof is as follows:

\begin{proof}[Proof of Proposition ~\ref{prop:TwoWitnessesNotHyperbolic} using Theorem ~\ref{thm:lox-action}] 
If $\calG(\Sigma)$ were 
$\delta$-hyperbolic, 
then Theorem ~\ref{thm:lox-action} would 
give two commuting loxodromic actions coming from the witnesses $W_1$ and $W_2$, contradicting hyperbolicity of the grand arc graph.
\end{proof}

While the above proof is much shorter, it also leaves out the explicit construction of a quasi-isometrically embedded copy of $\ZZ^2$ into $\calG(\Sigma)$.

\subsection{Finitely many orbits}
In this subsection, we prove that the mapping class group acts on the grand arc graph with finitely many orbits when $\Sigma$ is a surface whose maximal ends are stable in the sense of \cite{FGM, MR}:

\begin{definition}
Let $e$ be an end of a surface. A clopen neighbourhood $U$ of $e$ is \textit{stable} if for every
clopen neighbourhood $V \subset U$ there is a clopen neighbourhood $U' \subset V$ such that 
$(U, U \cap E^G(\Sigma)) \cong (U', U'\cap E^G(\Sigma))$. 
We say that an end is stable if it admits a stable neighbourhood, and is unstable otherwise.
\end{definition}

By Lemma 5.1 in \cite{FGM}, two ends are locally 
homeomorphic if and only if they are equivalent in 
the partial ordering on the space of ends defined by  Mann-Rafi. As an immediate corollary, 
it follows that there exists a homeomorphism of 
the end-space of $\Sigma$ sending a stable end $e$ to a stable end $f$ if and only if $e \sim f$. 

\begin{proposition}\label{prop:FiniteOrbits}
Let $\Sigma$ be an infinite-type surface whose maximal ends are stable.
If $|\calS(\Sigma)| \ge 2$, then
$\Map(\Sigma)$ acts on $\calG(\Sigma)$ with 
finitely many orbits.
\end{proposition}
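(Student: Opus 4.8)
The plan is to show that the $\Map(\Sigma)$-orbit of a grand arc is determined by a finite amount of topological data, and that only finitely many values of this data can occur. Since $|\calS(\Sigma)|\ge 2$, grand arcs exist, and since the grand splitting $\calS(\Sigma) = E_1\sqcup\cdots\sqcup E_n$ is preserved by $\Map(\Sigma)$, each grand arc $\alpha$ determines the unordered pair $\{E_i,E_j\}$ of distinct classes to which its two endpoints converge. This pair is an orbit invariant, and there are only $\binom{n}{2}$ of them, so it suffices to fix a pair, say $\{E_1,E_2\}$, and bound the number of orbits of grand arcs joining $E_1$ to $E_2$.

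Next I would attach to such an arc the topological type of its complement. Cutting $\Sigma$ along $\alpha$ yields a surface $\Sigma_\alpha$, connected when $\alpha$ is non-separating and with two components when $\alpha$ is separating, carrying the residual end data, the distinguished boundary produced by unzipping $\alpha$, and the record of which remaining class $E_k$ lands in which component. By Richards' classification \cite{R}, each component is determined by its genus, its end space, and its subset of ends accumulated by genus. I claim only finitely many such types arise. Whether $\alpha$ separates is one bit of data; if it separates, the classes $E_3,\dots,E_n$ are distributed between the two sides ($2^{n-2}$ possibilities), and the genus is split between them, which gives finitely many options when the genus is finite and, when it is infinite, is absorbed by self-similarity. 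Crucially, because every $E_k$ is self-similar, and in particular because the portions of a Cantor-set class $E_1$ or $E_2$ that fall to each side of the unzipped endpoint are again homeomorphic to that class, cutting manufactures no new homeomorphism type of end space. Hence the list of possible complementary types is finite.

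The step I expect to carry the weight is proving that this complementary type is a \emph{complete} orbit invariant, i.e.\ establishing a change-of-coordinates principle. Given two grand arcs $\alpha,\alpha'$ joining $E_1$ to $E_2$ with identical complementary data, I would first match their endpoints: since all ends in each $E_i$ are equivalent and stable, the corollary following Lemma 5.1 of \cite{FGM} (together with the Mann--Rafi ordering \cite{MR}) produces a homeomorphism of end spaces, hence a mapping class, carrying the endpoints of $\alpha$ to those of $\alpha'$ while preserving the grand splitting. After applying it I may assume $\alpha$ and $\alpha'$ share endpoints. I would then build a homeomorphism between the cut surfaces $\Sigma_\alpha$ and $\Sigma_{\alpha'}$ via Richards' classification, arranged to respect the boundary arcs and the distribution of the $E_k$, and re-glue to obtain a homeomorphism of $\Sigma$ taking $\alpha$ to $\alpha'$.

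The delicate points, and the main obstacle, are concentrated in this last step. Stability is exactly what guarantees that the end-space homeomorphism can be chosen to send maximal ends to maximal ends of the same type, so that it descends to an element of $\Map(\Sigma)$ and respects $\calS(\Sigma)$; and self-similarity is what lets one reconcile the two sides of a Cantor-set endpoint after unzipping, so that the abstract homeomorphism of cut surfaces actually exists and glues up compatibly with the arc. Combining the finiteness of the complementary types with the completeness of this invariant yields that $\Map(\Sigma)$ acts on $\calG(\Sigma)$ with finitely many orbits.
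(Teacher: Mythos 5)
Your plan is, at heart, the paper's argument --- reduce to the $\binom{n}{2}$ unordered pairs of classes in $\calS(\Sigma)$, match endpoints using stability and the fact that end-space homeomorphisms are induced by mapping classes, then cut along the arcs and reglue via Richards' classification --- but it contains a genuine gap in the finiteness step, namely your treatment of separating arcs. You never rule this case out, and your finiteness claim for it is unjustified and in general false: self-similarity of the classes $E_i$ controls only the \emph{maximal} ends, whereas the non-maximal part of the end space (together with its intersection with $E^G$) could a priori be partitioned between two complementary components in infinitely many pairwise non-homeomorphic ways, so ``the list of possible complementary types is finite'' does not follow from what you say. What rescues the statement is that the case is vacuous: a grand arc $\alpha$ joins two \emph{distinct} ends, so by Lemma~\ref{Lemma:SCCSeparatesEnds} there is a simple closed curve $\gamma$ separating those two ends; $\alpha$ must cross $\gamma$ an odd number of times, while an arc with disconnected complement crosses every simple closed curve an even number of times, so $\Sigma\ssm\alpha$ is connected. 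This is exactly what the paper imports from Lemma 3.3 of \cite{FGM}, which moreover identifies the complement: it has the same genus as $\Sigma$ and end space $E(\Sigma)/(e\sim f)$. With that in hand, the complement's homeomorphism type is determined by the endpoints alone, your entire ``complementary type'' stratification collapses, and the pair of endpoint classes is already a complete invariant --- which is the form in which the paper argues.

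The step you yourself flag as carrying the weight --- producing a mapping class taking $\alpha$ to $\alpha'$ once their endpoints have been matched --- is also where your write-up stops at a plan. For comparison, the paper does not reprove this either: it quotes \cite{AFLY} both for the fact that a homeomorphism of $(E(\Sigma),E^G(\Sigma))$ is induced by a mapping class (your parenthetical ``hence a mapping class'' is precisely this nontrivial realization theorem, not a consequence of Lemma 5.1 of \cite{FGM} or of \cite{MR} alone) and for the conclusion that two grand arcs with the same endpoints, whose complements are homeomorphic by Lemma 3.3 of \cite{FGM} and \cite{R}, are equivalent under $\Map(\Sigma)$. To close your argument you must either invoke these results explicitly or prove that the homeomorphism of cut surfaces can be chosen to respect the two boundary copies of the arc so that it glues to a homeomorphism of the pair $(\Sigma,\alpha)$ onto $(\Sigma,\alpha')$; as written, this is asserted rather than established.
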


\begin{proof}
To prove this claim, we show that if $\alpha$ and $\beta$ have endpoints which lie in the same equivalence classes of ends, then they are equal up to a homeomorphism.

Suppose first that $\alpha,\beta\in \calG(\Sigma)$ 
have exactly the same endpoints, $e$ and $f$.
Then by Lemma 3.3 of \cite{FGM} and the classification of infinite-type surfaces \cite{R}, 
it follows that $\Sigma\ssm \alpha$  
and $\Sigma \ssm \beta$ are both homeomorphic 
to the surface $\Sigma'$ which has the same genus as $\Sigma$ 
and has end space homeomorphic to $E(\Sigma)/(e\sim f)$. 
Thus, $\alpha$ and $\beta$ are the same up to homeomorphism, by \cite{AFLY}.

Next, assume that $\alpha$ and $\beta$ each have one endpoint in common, and their remaining endpoints, 
$e_{\alpha}$ and $e_{\beta}$ respectively, lie in 
the same equivalence class of maximal 
ends. Each equivalence class of 
maximal ends is either finite or a Cantor set \cite{MR}, and in particular, there must be a homeomorphism of $E(\Sigma)$ sending $e_{\alpha}$ to $e_{\beta}$. Such a homeomorphism is induced by a mapping class \cite{AFLY}.
Therefore, up to homeomorphism, we can can assume that 
the endpoints of $\alpha$ and the endpoints of $\beta$ are the same.

Finally, let $e_{\alpha}$ and $f_{\alpha}$ be the endpoints of $\alpha$ and $e_{\beta}$ and $f_{\beta}$ be the endpoints of $\beta$.
Assume that the endpoints $e_{\alpha}$ and $e_{\beta}$ lie in the same equivalence class of ends, and the endpoints $f_{\alpha}$ and $f_{\beta}$ lie in the same equivalence class of ends.
By the argument above, we may assume that, up to a homeomorphism, $\alpha$ and $\beta$ have the same endpoints.

Thus, up to applying mapping classes, there are precisely $\binom{m}{2}$ orbits of grand arcs, where $m$ is the number of equivalence classes of maximal ends, and the lemma follows.
\end{proof}

By combining Proposition~\ref{prop:FiniteOrbits} with Proposition ~\ref{prop:QuasiContinuous}, we prove Theorem \ref{thm:mcgaction} which states that the mapping class group acts by isometries on the grand arc graph, that the action is of the mapping class group on a $\delta$-hyperbolic grand arc graph is quasi-continuous, and that if $\Sigma$ is a stable surface of infinite-type then the mapping class group acts on the grand arc graph with finitely many orbits.

As a corollary of Proposition \ref{prop:FiniteOrbits}, we find that when the maximal ends are stable, the quotient space $\calG(\Sigma)/\Map(\Sigma)$ is a clique of size $\binom{m}{2}$, where $m$ is the number equivalence classes of maximal ends.
To see that the quotient is a clique, we notice that for any quadruple of elements in the grand splitting, $E_1,E_2,E_3$, and $E_4$, we can construct disjoint arcs whose four endpoints lie in $E_1,E_2,E_3$ and $E_4$.

\begin{corollary}
Suppose $\Sigma$ is an infinite-type surface whose maximal ends are stable.
If $|\calS(\Sigma)|<\infty$, then 
$\calG(\Sigma)/\Map(\Sigma)$ is a clique of 
size $\binom{m}{2}$, where $m$ is the number of equivalence classes of maximal ends.
\end{corollary}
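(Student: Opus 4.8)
The plan is to verify separately that the quotient has the right number of vertices and that it is a complete graph. For the vertex count I would simply invoke Proposition~\ref{prop:FiniteOrbits}: stability of the maximal ends forces the orbit of a grand arc under $\Map(\Sigma)$ to be determined by the unordered pair $\{E_i,E_j\}$ of distinct grand-splitting elements into which its two ends converge, and every such pair is realized. Hence the vertices of $\calG(\Sigma)/\Map(\Sigma)$ are in canonical bijection with the $2$-element subsets of $\calS(\Sigma)$, of which there are exactly $\binom{m}{2}$.

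It then remains to show the quotient is complete, i.e.\ that any two distinct vertices are adjacent. Fixing two distinct pairs $\{E_a,E_b\}$ and $\{E_c,E_d\}$, I would produce disjoint grand arcs $\alpha,\beta$ with $\alpha$ converging into $E_a,E_b$ and $\beta$ into $E_c,E_d$; since these represent distinct orbits, an edge between the two vertices follows. Because the pairs are distinct, the indices $a,b,c,d$ realize either four distinct values (disjoint pairs, which needs $m\ge 4$) or three distinct values (the pairs share one element), so there are two cases to treat.

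My main tool would be the witness $W\cong$ ($m$-holed sphere) provided by Lemma~\ref{lem:witness-exists}, with boundary curves $\gamma_1,\dots,\gamma_m$ glued to complementary components $\Sigma_1,\dots,\Sigma_m$ carrying the classes $E_1,\dots,E_m$. When $a,b,c,d$ are four distinct indices, I use the standard planarity fact that on a sphere an arc joining $\gamma_a$ to $\gamma_b$ and an arc joining $\gamma_c$ to $\gamma_d$ can be realized disjointly; extending each arc by a tail into the two relevant complementary components --- which are now pairwise distinct, one tail each --- so that it converges to a maximal end yields disjoint grand arcs. When the pairs share an index, say $\{E_a,E_b\}$ and $\{E_a,E_d\}$, the sphere likewise supports disjoint arcs from $\gamma_a$ to $\gamma_b$ and from $\gamma_a$ to $\gamma_d$ leaving $\gamma_a$ at two distinct points, and the tails into $\Sigma_b$ and $\Sigma_d$ lie in distinct components and cause no trouble.

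The one genuinely delicate point, and the step I expect to require the most care, is arranging the two tails that must both enter the shared component $\Sigma_a$ to be disjoint. Here I would use that $E_a$ is self-similar, hence by Proposition~4.7 of \cite{MR} either a Cantor set or a single end. If it is a Cantor set I route the two tails to two distinct maximal ends of $E_a$, which possess disjoint clopen neighborhoods and hence disjoint one-cut flare neighborhoods in the sense of Lemma~3.1 of \cite{FGM}, so the tails are disjoint. If $E_a$ is a single end the two tails converge to the same end, but a one-cut neighborhood of a single end still supports two disjoint properly embedded arcs converging to it --- exactly the sort of flare construction used in Lemma~\ref{Lem:SpecificWitness}, where disjointness is measured on the images in $\Sigma$ even though the closures share the limiting end. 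In every case the resulting $\alpha$ and $\beta$ are disjoint grand arcs in distinct orbits, so the corresponding vertices are adjacent; as the two vertices were arbitrary, $\calG(\Sigma)/\Map(\Sigma)$ is the complete graph on $\binom{m}{2}$ vertices, as claimed.
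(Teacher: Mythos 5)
Your proposal is correct and follows essentially the same route as the paper: the vertex count is quoted from Proposition~\ref{prop:FiniteOrbits}, and the clique property is obtained by explicitly constructing disjoint grand arcs realizing any two orbits. The paper disposes of the edge construction in a single sentence (disjoint arcs with endpoints in any quadruple of splitting elements), so your careful handling of the shared-element case --- which is the only case when $m=3$ --- and of the two tails entering a common complementary component (Cantor-set versus singleton $E_a$, with disjointness measured in $\Sigma$ rather than $\bar\Sigma$) simply supplies details the paper leaves implicit.
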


In particular, when $\Sigma$ has exactly one equivalence class of maximal ends, then the mapping class group acts transitively on $\calG(\Sigma)$.

\bibliographystyle{plain}
\bibliography{GrandArcs}

\end{document}